\newcommand{\hdt}{{\dot{\mathrm{H}}^{1/2}}}
\newcommand{\hdtr}{{\dot{\mathrm{H}}^{1/2}(\mathbb{R}^3)}}
\newcommand{\R}{\mathbb{R}}
\newcommand{\ei}{\mathrm{e}^{it\Delta}}
\newtheorem{thm}{Theorem}[section]
\newtheorem{lem}[thm]{Lemma}
\newtheorem{pp}[thm]{Proposition}
\newtheorem{cor}[thm]{Corollary}
\theoremstyle{remark}
\newtheorem{rem}[thm]{Remark}
\theoremstyle{definition}
\newtheorem{dfn}[thm]{Definition}
\newtheorem{step}{Step}
\numberwithin{equation}{section}
\title{Scattering for $\hdt$ bounded solutions to the cubic, defocusing NLS in 3 
dimensions\footnote{2000 MSC number 35Q55}}
\author{Carlos E. Kenig\footnote{The first author was supported in part by NSF and the second one in part by CNRS. Part of this research was carried out during visits of the second author to the University of Chicago and IHES. Also, this research was supported in part by ANR ONDE NONLIN.}
\\Department of Mathematics\\University of Chicago\\Chicago, Il 60637\\USA\\cek@math.uchicago.edu \and 
Frank Merle\\Departement de Mathematiques\\Universite de Cergy--Pontoise\\Pontoise\\95302 Cergy--Pontoise\\ FRANCE\\Frank.Merle@math.u-cergy.fr}
\date{}
\begin{document}
\maketitle
\begin{abstract}
We show that if a solution of the defocusing cubic NLS in 3d remains bounded in the homogeneous Sobolev norm of order $1/2$ in its maximal interval of existence, then the interval is infinite and the solution scatters. No radial assumption is made.
\end{abstract}

\section{Introduction}

In this paper we continue our study of critical nonlinear dispersive problems, which we have developed in \cite{KM} and \cite{KM2}. In the present work we turn our attention to the defocusing, cubic NLS in three space dimensions, in the critical space $\hdt$. We then use a version of the concentration-compactness-rigidity method we introduced in \cite{KM}, to obtain the following:

\begin{thm}\label{thm11}
Suppose that $u$ is a solution \eqref{cp} with initial data $u_0\in\hdt(\R^3)$ (see section 2 for \eqref{cp}), and maximal interval of existence $I$ (Definition \ref{dfn27})). Assume that 
$ \sup_{0<t<T_+(u_0)}\lVert u(t)\rVert_{\hdt(\R^3)}=A<+\infty$. Then 
$T_+(u_0)=+\infty$ and $u$ must scatter at plus infinity, i.e. there exists $u_0^+$ so that
\begin{equation*}
\lim_{t \to + \infty}\lVert u(t)-\ei u_0^+\rVert_\hdt =0.
\end{equation*}
\end{thm}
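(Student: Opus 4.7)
The plan is to execute the concentration--compactness/rigidity road map we developed in \cite{KM}, \cite{KM2}. First I would set up the local theory in $\hdtr$: fix a scattering norm at the critical scaling, $\|u\|_{S(I)} = \|u\|_{L^5_{t,x}(I\times\R^3)}$ (an endpoint Strichartz space into which $\hdt$ embeds, since $2/5+3/5=1$), prove small data global well-posedness, scattering, and a long-time perturbation/stability theorem using Strichartz and fractional chain rule estimates at the level of $\hdt$. Note that $\hdt$ is \emph{not} a conserved quantity of the cubic NLS in 3D, so the hypothesis $\sup_t\|u(t)\|_\hdt = A$ must be fed in as an assumption and propagated by bootstrap---this is the structural feature that distinguishes the present theorem from its energy-critical analog and forces the analysis to proceed under a conditional bound.

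Argue by contradiction. Set $A_c = \sup\{A : \text{the conclusion of Theorem \ref{thm11} holds whenever }\sup_t\|u(t)\|_\hdt\le A\}$ and assume $A_c<\infty$. Pick sequences $u_{0,n}$, $T_n$ with $\sup_{[0,T_n)}\|u_n(t)\|_\hdt\to A_c$ and $\|u_n\|_{S([0,T_n))}\to\infty$, and apply a linear profile decomposition for $\ei$ in $\hdt$ (of Keraani type) to $u_{0,n}$. Using the perturbation theorem, show that only one profile can survive, producing a \emph{critical element} $u_c$: a solution of the defocusing cubic NLS with $\sup_t\|u_c(t)\|_\hdt = A_c$ and $\|u_c\|_{S([0,T_+(u_{c,0})))}=\infty$. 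Iterating the extraction on $u_c(t_n,\cdot)$ shows that the $u_c$-orbit is precompact modulo the symmetries: there exist $\lambda(t)>0$ and $x(t)\in\R^3$ such that
\[K=\bigl\{\lambda(t)\,u_c\bigl(t,\lambda(t)(\cdot-x(t))\bigr):0\le t<T_+\bigr\}\]
is relatively compact in $\hdt$.

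The rigidity step then rules out any such $u_c$, splitting according to whether $T_+<\infty$ or $T_+=+\infty$ and according to the behavior of $\lambda(t)$. In the global case the tool of choice is the Lin--Strauss Morawetz inequality, which in 3D for the cubic controls $\iint|u|^4/|x|\,dx\,dt$ by $\sup_t\|u(t)\|_\hdt^2$---precisely at the critical scaling---and combined with the precompactness of $K$ it should force $u_c\equiv 0$. In the finite-time case one aims to show that compactness forces all the $\hdt$ mass to concentrate at a single point, contradicting either the $\hdt$ bound or the local well-posedness theory.

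The main obstacle is this rigidity step. Because $\hdt$ is not conserved, the mass/energy arguments familiar from the $\dot{\mathrm H}^1$- and $L^2$-critical settings are unavailable. The Morawetz inequality sits at the exact critical scaling, which is both a blessing (it is dimensionally admissible) and a curse (there is no slack to absorb error terms from truncations or localizations). Closing rigidity therefore requires delicate quantitative control of the compactness parameters---ruling out e.g.\ $\lambda(t)\to 0$ on a subsequence, or $|x(t)|$ drifting too fast relative to $1/\lambda(t)$---likely via a frequency-localized Morawetz estimate tailored to the precompact orbit. Handling the self-similar blowup scenario at $\hdt$ regularity, without a conservation law to pin the solution, is a further technical point.
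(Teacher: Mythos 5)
Your roadmap coincides with the paper's in outline (local theory in $L^5_{t,x}$, Keraani-type profile decomposition, critical element, compactness modulo $x(t),\lambda(t)$, Morawetz-based rigidity), but at the two decisive points the proposal names the difficulty without resolving it, and the fix you suggest for rigidity is not the one that works. You propose to split into $T_+<\infty$ and $T_+=+\infty$ and to close the global case ``likely via a frequency-localized Morawetz estimate,'' while admitting you cannot control the modulation parameters. The actual argument needs no frequency localization and no case split. First one shows (Lemma \ref{lem336}, a Merle-type re-extraction) that if the scaling parameter of the critical element is unbounded in the concentrating direction, one can pass to a limit along times where it is essentially maximal and obtain a \emph{new} critical element whose parameter satisfies $\tilde\lambda(t)\le M_0$; the dangerous regime is traded away rather than ruled out directly. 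With $\lambda(t)\le M_0$, compactness alone gives a lower bound on the local Morawetz quantity on consecutive intervals $[t_n,t_{n+1}]$, $t_{n+1}=t_n+\tau_0/\lambda(t_n)^2$, of the form $c\,\alpha_n/\sum_{j\le n}\alpha_j$ with $\alpha_n=1/\lambda(t_n)$ (Corollary \ref{cor45} and Lemma \ref{lem47}); the drift of $x(t)$, which you flag as a worry, is controlled by compactness itself (Lemma \ref{lem42} iii), iv)), and the elementary Lemma \ref{lem49} shows the resulting series diverges. This contradicts the plain Lin--Strauss bound of Lemma \ref{lem412}, which is exactly at $\hdt$ scaling and needs no slack. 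Moreover $\lambda(t)\le M_0$ together with the local theory already forces $T_+=+\infty$ (Corollary \ref{cor45} i)), so no separate self-similar or finite-time analysis is required.

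The second gap is the extraction of the critical element. You assert that ``only one profile can survive, producing a critical element with $\sup_t\|u_c(t)\|_\hdt=A_c$,'' but this is precisely where the absence of a conservation law bites: the Pythagorean expansion from the profile decomposition holds only at the initial time, and since the $\hdt$ norm is not conserved you cannot deduce that a non-scattering profile inherits a supremum bound below $A_C$ from the smallness of the remaining profiles at $t=0$. The paper's proof of Proposition \ref{pp33} supplies the missing mechanism: it truncates the existence times ($T^+_{j,k}$, $t^n_k$), identifies the bad profile $j_0$ and times $\tau^{n}_{j_0,k}$ at which $U_{j_0}$ nearly attains its $\hdt$ supremum, proves an almost-orthogonality of the $\hdt$ norms of the \emph{evolved} nonlinear profiles at those specific times (Step 7, which requires re-running the orthogonality arguments on the nonlinear flows and a double subsequence extraction), and only then passes to the limits in $\beta$ and $\alpha$ to conclude $A=A_C$ and that all other profiles and the remainder vanish. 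Your proposal contains no substitute for this, so as written both the construction of the critical element and the rigidity argument are incomplete.
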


Note that there in no radial assumption on $u_0$. Using the concentration\--compactness procedure (Propositions \ref{pp33} and \ref{pp34}) we show that if Theorem \ref{thm11} fails, there must exist a critical element, which enjoys a compactness property. Finally, in section 4 we establish a rigidity theorem, which shows that no such element can exist, using the well-known 
Lin--Strauss \cite{LS} estimate of Morawetz type. As a consequence of our result, the set of data $u_0$ as in Theorem \ref{thm11} is an open set in $\hdt(\R^3)$. Moreover if $u_0\in\hdt$ and $T_+(u_0)<\infty$, then $ \sup_{0<t<T_+(u_0)}\lVert u(t)\rVert_\hdt=+\infty$. Finally, an interesting open problem that we don't addres here is to show that for all data $u_0\in\hdt$, we must have $ \sup_{0<t<T_+(u_0)}\lVert u(t)\rVert_\hdt<+\infty$, for solutions of the cubic defocusing NLS in 3 dimensions.

We conclude this introduction by mentioning the work \cite{CKSTT}, in which  the authors were able to show scattering in \eqref{cp} for all data in $\mathrm{H}^s(\R^3)$, $s>4/5$. See also the references in \cite{CKSTT} for previous work in this problem.

\section{The Cauchy problem}

In this section we will review the Cauchy problem
\begin{equation}\label{cp}
\left\{\begin{array}{l}
i\partial_tu+\Delta u-|u|^2u=0\quad\quad(x,t)\in\R^3\times\R\\ \\
u\arrowvert_{t=0}=u_0\in\hdt(\R^3)\end{array}\right.
\end{equation}

This problem is $\hdt$ critical, because if $u(x,t)$ solves \eqref{cp}, so does 
$  u_{\lambda}(x)=
\frac{1}{\lambda}u\left(\frac{x}{\lambda},\frac{t}{\lambda^2}\right)$, with initial data
$  u_{0,\lambda}(x)=
\frac{1}{\lambda}u_0\left(\frac{x}{\lambda}\right)$ and 
$\lVert u_{0,\lambda}\rVert_\hdt=\lVert u_0\rVert_\hdt$. The nonlinearity is defocusing. The Cauchy problem theory (see \cite{CW}, \cite{KM}) depends on some previous results, which we now recall.
\begin{lem}[Strichartz estimates \cite{S}, \cite{KT}]\label{lem21}
We say that $(q,r)$ is admissible if $ \frac{2}{q}+\frac{3}{r}=\frac{3}{2}$ and
$2\leq q$, $r\leq\infty$. Then, if $2\leq r\leq 6$, $(m,n)$ is admissible and $2\leq m\leq 6$,
\begin{itemize}
\item[i)] $ \lVert \ei h\rVert_{L_t^q L_x^r}\leq C\lVert h\rVert_{L^2}$
\item[ii)] 
$ \left\lVert\int_{-\infty}^{+\infty}e^{i(t-\tau)\Delta}g(\cdot,\tau)d\tau\right\rVert_{L_t^q L_x^r}+\left\lVert\int_{0}^{t}e^{i(t-\tau)\Delta}g(\cdot,\tau)d\tau\right\rVert_{L_t^q L_x^r}\leq$\\
  $\leq C\lVert g\rVert_{L_t^{m'} L_x^{n'}}$ 
\item[iii)] $ \left\lVert\int_{-\infty}^{+\infty}e^{i(t-\tau)\Delta}g(\cdot,\tau)d\tau\right\rVert_{L_x^2}\leq
C\lVert g\rVert_{L_t^{m'} L_x^{n'}}$
\end{itemize}
\end{lem}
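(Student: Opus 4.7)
The plan is to follow the classical Strichartz / Keel--Tao route. The starting ingredient is the dispersive estimate $\lVert e^{it\Delta}h\rVert_{L^\infty_x}\leq C|t|^{-3/2}\lVert h\rVert_{L^1_x}$, which one reads off from the explicit convolution kernel of the free Schr\"odinger group; interpolating with the unitarity $\lVert e^{it\Delta}h\rVert_{L^2_x}=\lVert h\rVert_{L^2_x}$ gives $\lVert e^{it\Delta}h\rVert_{L^r_x}\leq C|t|^{-3(1/2-1/r)}\lVert h\rVert_{L^{r'}_x}$ for all $2\leq r\leq\infty$. For (i) I would run the $TT^*$ argument: the claim is equivalent to $\lVert\int e^{i(t-\tau)\Delta}g(\cdot,\tau)\,d\tau\rVert_{L^q_tL^r_x}\leq C\lVert g\rVert_{L^{q'}_tL^{r'}_x}$; inserting the pointwise (in $t,\tau$) dispersive bound above reduces matters to the Hardy--Littlewood--Sobolev inequality in the time variable with weight $|t-\tau|^{-3(1/2-1/r)}$ and Sobolev exponent tuned by the scaling relation $2/q+3/r=3/2$, which gives the full range $(q,r)$ with $q>2$, $r<6$.

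The case that does \emph{not} yield to this $TT^*$ plus HLS scheme is the endpoint $(q,r)=(2,6)$, which is precisely the $\hdt$-scaling exponent and therefore the one that matters most in this paper. For that case I would invoke the Keel--Tao bilinear interpolation argument: decompose the bilinear form $\langle e^{i(t-\tau)\Delta}g(\tau),f(t)\rangle$ dyadically in $|t-\tau|$, use both the $L^1\to L^\infty$ dispersive and the $L^2\to L^2$ unitary bound on each piece to obtain two off-diagonal bilinear estimates, and then interpolate and sum in the atomic Lorentz-type space that controls $L^2_t$. Once the endpoint $\lVert e^{it\Delta}h\rVert_{L^2_tL^6_x}\leq C\lVert h\rVert_{L^2}$ is in hand, (iii) for any admissible $(m,n)$ is just its dual, and (ii) without the retarded cutoff follows by composing the adjoint (iii) with the direct estimate (i). The retarded version in (ii) is recovered from the non-retarded one by the Christ--Kiselev lemma whenever $q>m'$, and in the remaining diagonal endpoint $(q,r)=(m,n)=(2,6)$ one again appeals directly to the Keel--Tao bilinear decomposition applied to the Duhamel kernel.

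The only real obstacle is thus the endpoint line $(2,6)$, together with its retarded companion at $(m',n')=(2,6/5)$: the naive $TT^*$ plus HLS argument saturates there because the HLS exponent is critical, and the Christ--Kiselev lemma requires a strict inequality between time exponents. I would not re-prove these endpoint statements but would quote \cite{KT} for them, since everything else in the lemma is a straightforward interpolation / duality consequence of the dispersive estimate and of the endpoint bound.
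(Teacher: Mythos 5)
Your outline is correct and is exactly the standard Strichartz/Keel--Tao argument; the paper itself offers no proof of Lemma \ref{lem21}, quoting \cite{S} and \cite{KT} instead, and your sketch (dispersive bound, $TT^*$ plus Hardy--Littlewood--Sobolev off the endpoint, Keel--Tao bilinear interpolation at $(2,6)$, duality for (iii), composition for (ii), Christ--Kiselev for the retarded case with the endpoint quoted from \cite{KT}) is precisely the argument those references supply. Nothing further is needed.
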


\begin{lem}[Sobolev embedding]\label{lem22}
For $v\in C_0^\infty(\R^4)$, we have
\begin{equation*} \lVert v\rVert_{L_t^5 L_x^5}\leq C\lVert D^{1/2}v\rVert_{L_t^5 L_x^{30/11}}.\end{equation*}
\end{lem}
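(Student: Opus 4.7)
The plan is to reduce the bound to the classical fractional Sobolev embedding in the spatial variable alone, applied slicewise in $t$, and then to exploit the fortunate fact that the time exponents on the two sides already agree. Interpreting $D^{1/2}$ as the spatial derivative $(-\Delta_x)^{1/4}$, I would first establish, for each fixed $t$, the stationary inequality
\begin{equation*}
\lVert v(\cdot,t)\rVert_{L_x^5(\R^3)}\leq C\lVert D^{1/2}v(\cdot,t)\rVert_{L_x^{30/11}(\R^3)},
\end{equation*}
and then raise both sides to the fifth power and integrate over $t\in\R$. Because the outer norms are both $L^5_t$, no Minkowski interchange or interpolation is needed; the pointwise-in-$t$ inequality integrates to the desired mixed-norm bound directly.

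The slicewise estimate is the standard homogeneous Sobolev embedding $\dot{W}^{1/2,\,30/11}(\R^3)\hookrightarrow L^5(\R^3)$, whose scaling indeed matches: $\tfrac{1}{5}=\tfrac{11}{30}-\tfrac{1/2}{3}$. I would prove it by writing $w=I_{1/2}(D^{1/2}w)$ for $w\in C_0^\infty(\R^3)$, where $I_{1/2}$ is the Riesz potential of order $1/2$ on $\R^3$ (convolution with a constant times $|x|^{-5/2}$), and then invoking Hardy--Littlewood--Sobolev to obtain the mapping $L^{30/11}\to L^5$.

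There is no genuine obstacle here; the lemma is a routine mixed-norm consequence of a classical stationary Sobolev inequality, made immediate by the coincidence of time exponents on the two sides. The reason to isolate it is presumably for use in tandem with Lemma \ref{lem21}: since the pair $(5,30/11)$ is Strichartz admissible ($\tfrac{2}{5}+\tfrac{3\cdot 11}{30}=\tfrac{3}{2}$), applying Strichartz to $D^{1/2}v$ and then Lemma \ref{lem22} converts the $L^2$-based dispersive bound into control of $v$ in the scale-invariant cubic norm $L_t^5L_x^5$, which is the natural controlling norm for the $\hdt$-critical problem \eqref{cp}.
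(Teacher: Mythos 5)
Your argument is correct and is exactly the standard proof that the paper leaves implicit (the lemma is stated without proof as a classical embedding): the fixed-time homogeneous Sobolev inequality $\dot{W}^{1/2,30/11}(\R^3)\hookrightarrow L^5(\R^3)$ via the Riesz potential and Hardy--Littlewood--Sobolev, raised to the fifth power and integrated in $t$, with the matching $L^5_t$ exponents making the mixed-norm passage trivial. The scaling check and the remark about pairing with the admissible exponent $(5,30/11)$ in Lemma \ref{lem21} are both accurate.
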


\begin{lem}[Chain rule for fractional derivatives, \cite{KPV}]\label{lem23}
If $F\in C^2$, with $F(0)=0$, $F'(0)=0$, and $|F''(a+b)|\leq C\left\{|F''(a)|+|F''(b)|\right\}$, and
$|F'(a+b)|\leq C\left\{|F'(a)|+|F'(b)|\right\}$, we have, for $0<\alpha<1$,
\begin{equation*}\lVert D^\alpha F(u)\rVert_{L_x^p}\leq C\lVert  F'(u)\rVert_{L_x^{p_1}}
\lVert D^\alpha u\rVert_{L_x^{p_2}}\;, \quad\frac{1}{p}=\frac{1}{p_1}+\frac{1}{p_2},\end{equation*}
\begin{multline*}
\lVert D^\alpha[F(u)-F(v)]\rVert_{L_x^p}\leq\\\leq
C\left[\lVert F'(u)\rVert_{L_x^{p_1}}+\lVert F'(v)\rVert_{L_x^{p_1}}\right]\lVert D^\alpha(u-v)\rVert_{L_x^{p_2}}+\\ 
+C\left[\lVert F''(u)\rVert_{L_x^{r_1}}+\lVert F''(v)\rVert_{L_x^{r_1}}\right]\\\times
\left[\lVert D^\alpha u\rVert_{L_x^{r_2}}+\lVert D^\alpha v\rVert_{L_x^{r_2}}\right]
\lVert u-v\rVert_{L_x^{r_3}}\;,\end{multline*}
\begin{equation*}\frac{1}{p}=\frac{1}{r_1}+\frac{1}{r_2}+\frac{1}{r_3},\quad
\frac{1}{p}=\frac{1}{p_1}+\frac{1}{p_2}.\end{equation*}
\end{lem}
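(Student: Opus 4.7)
The plan is to follow the Kenig--Ponce--Vega approach, which bypasses paradifferential calculus in favour of Stein's pointwise square-function characterization of fractional derivatives. Recall that for $0<\alpha<1$ and $1<p<\infty$ one has the equivalence $\lVert D^\alpha f\rVert_{L^p}\sim \lVert \mathcal D_\alpha f\rVert_{L^p}$, where
\begin{equation*}
\mathcal D_\alpha f(x)=\Bigl(\int_{\R^n}\frac{|f(x)-f(y)|^2}{|x-y|^{n+2\alpha}}\,dy\Bigr)^{1/2}.
\end{equation*}
Working with $\mathcal D_\alpha$ is the decisive simplification: it turns a smoothing-of-order-$\alpha$ estimate into a pointwise computation combined with a Littlewood--Paley-type bound on a quadratic form.

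For the first inequality I would start from the identity $F(u(x))-F(u(y))=(u(x)-u(y))\int_0^1 F'(u(y)+s(u(x)-u(y)))\,ds$. Using the hypothesis $|F'(a+b)|\lesssim |F'(a)|+|F'(b)|$ (applied to $a=(1-s)u(y)$, $b=su(x)$, together with the monotonicity of $|F'(\lambda z)|$ in $\lambda\in[0,1]$ that is implicit for the power-type $F$'s at stake), I obtain the pointwise bound
\begin{equation*}
|F(u(x))-F(u(y))|\leq C\bigl(|F'(u(x))|+|F'(u(y))|\bigr)|u(x)-u(y)|.
\end{equation*}
Inserting this into $\mathcal D_\alpha(F(u))(x)$ and using Minkowski splits it as $|F'(u(x))|\,\mathcal D_\alpha u(x)$ plus a ``twisted'' square function $\mathcal S(x)$ with $F'(u(y))$ in the integrand. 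The first piece is handled by H\"older in $L^{p_1}\times L^{p_2}$. The main work is bounding $\|\mathcal S\|_{L^p}$: one writes $\mathcal S$ as a vector-valued singular integral with kernel valued in $L^2(\frac{dy}{|y|^{n+2\alpha}})$ and invokes the Fefferman--Stein vector-valued maximal inequality, or equivalently uses a Littlewood--Paley/paraproduct decomposition of the product $F'(u)\cdot u$, to arrive at $\|\mathcal S\|_{L^p}\lesssim \|F'(u)\|_{L^{p_1}}\|D^\alpha u\|_{L^{p_2}}$.

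For the difference inequality, the algebraic starting point is
\begin{equation*}
[F(u(x))-F(u(y))]-[F(v(x))-F(v(y))]=\int_0^1 F'(b_s)\,ds\,\Delta_{xy}(u-v)+\int_0^1[F'(a_s)-F'(b_s)]\,ds\,\Delta_{xy}u,
\end{equation*}
where $a_s=(1-s)u(y)+su(x)$, $b_s=(1-s)v(y)+sv(x)$, and $\Delta_{xy}w=w(x)-w(y)$. The first summand reproduces the first inequality with $u$ replaced by $u-v$, yielding the $[\|F'(u)\|_{p_1}+\|F'(v)\|_{p_1}]\|D^\alpha(u-v)\|_{p_2}$ term. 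For the second summand I apply the mean value theorem to $F'$, use the hypothesis $|F''(a+b)|\lesssim|F''(a)|+|F''(b)|$, and note that $|a_s-b_s|\leq|u(x)-v(x)|+|u(y)-v(y)|$; this introduces the extra $L^{r_3}$ factor $\|u-v\|_{L^{r_3}}$ and the $D^\alpha u,D^\alpha v$ factors through the resulting (two copies of the) square function, giving the three-factor term in the statement.

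The step I expect to be the real obstacle is the $L^p$ bound for $\mathcal S$ and for its analogue in the difference estimate. The pointwise reductions are essentially bookkeeping with the mean value theorem and the algebraic hypotheses on $F',F''$; but controlling a square function whose ``weight'' $F'(u(y))$ varies in $y$ requires one to genuinely exploit either vector-valued Calder\'on--Zygmund theory or a Littlewood--Paley characterization of $\dot H^\alpha$-type norms. Once this step is in hand, the rest of both inequalities follows by H\"older, the Stein equivalence, and the reductions above.
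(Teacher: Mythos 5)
First, a point of comparison: the paper does not prove Lemma \ref{lem23} at all --- it is quoted from \cite{KPV} --- so your proposal can only be measured against the standard proofs in the literature (the Kenig--Ponce--Vega appendix, or the Littlewood--Paley argument of Christ--Weinstein). Your outline has the right overall shape for the difference-quotient route: Stein's characterization $\lVert D^\alpha f\rVert_{L^p}\sim\lVert\mathcal{D}_\alpha f\rVert_{L^p}$, the mean-value bound $|F(u(x))-F(u(y))|\lesssim(|F'(u(x))|+|F'(u(y))|)|u(x)-u(y)|$, and the analogous two-term decomposition for $F(u)-F(v)$ producing the $F''$ factor; those algebraic reductions are fine (modulo the fact that bounding $F'((1-s)u(y)+su(x))$ really does need a little more than the stated hypotheses, as you half-acknowledge with the ``monotonicity of $|F'(\lambda z)|$'' aside --- harmless for $F(z)=|z|^2z$ but an extra assumption in general).

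The genuine gap is exactly where you place it, and it is not closed by the tools you name. The twisted square function $\mathcal{S}(x)=\bigl(\int|F'(u(y))|^2|u(x)-u(y)|^2|x-y|^{-n-2\alpha}\,dy\bigr)^{1/2}$ is not a vector-valued Calder\'on--Zygmund operator applied to $F'(u)$: the ``kernel'' $|u(x)-u(y)|\,|x-y|^{-(n+2\alpha)/2}$ depends on the unknown $u$ and has neither the size nor the regularity/cancellation structure needed, so the Fefferman--Stein vector-valued maximal inequality cannot simply be invoked on it. The known ways to finish are genuinely different arguments: either dualize $\mathcal{S}^2$ in $L^{p/2}$, interchange the $x$ and $y$ integrations, and run a weighted/maximal-function estimate (the Stein-type completion, which carries exponent restrictions), or abandon $\mathcal{D}_\alpha$ altogether and prove the estimate on Littlewood--Paley pieces \`a la Christ--Weinstein, where the $F'(u(y))$ term is controlled through maximal functions of the projections of $u$ and only then does Fefferman--Stein enter; calling these ``equivalent'' elides the actual work. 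In addition, the pointwise characterization your whole plan rests on is only available for $p>2n/(n+2\alpha)$ (one needs $f\in L^2_{\mathrm{loc}}$ merely for $\mathcal{D}_\alpha f$ to be finite), so as written the argument does not cover the full exponent range of the lemma; note that the paper applies it with $\alpha=1/2$, $n=3$ and output exponent $30/23<3/2=2n/(n+2\alpha)$, so this is not an academic restriction --- you would either have to verify that the one-sided inequality you need survives below the threshold or switch to the Littlewood--Paley proof, which has no such limitation. As it stands, the proposal is a reasonable plan whose central analytic step, and the step that makes the lemma true in the stated generality, is missing.
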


Let us define the $S(I)$, $W(I)$ norm for a time interval $I$ by
\begin{equation*}\lVert v\rVert_{S(I)}=\lVert v\rVert_{L_I^5L_x^5}\quad\quad\text{and}\quad\quad
\lVert v\rVert_{W(I)}=\lVert v\rVert_{L_I^5L_x^{30/11}}\;.\end{equation*}
Now, using Lemma \ref{lem21}, with $(q,r)=(5,30/11)$, $(m,n)=(5/2,30/7)$, $(m',n')=(5/3,30/23)$, we obtain, in a standard manner (see also \cite{CW}, \cite{KM} for similar proofs):
\begin{thm}[\cite{CW}, \cite{KM}]\label{thm24} 
Asume $u_0\in\hdt(\R^3)$, $t_0\in I$, $||u_0||_{\hdt(\R^3)}$ $\leq A$. Then there exists 
$\delta=\delta(A)$ such that if $||e^{i(t-t_0)\Delta}u_0||_{S(I)}<\delta$, there exists a unique solution $u$ to \eqref{cp} in $\R^3\times I$, with $u\in C(I;\hdt(\R^3))$,
\begin{gather*}
||D^{1/2}u||_{W(I)}+\sup_{t\in I}||D^{1/2}u(t)||_{L^2}\leq CA,\quad
||u||_{S(I)}\leq 2\delta.
\end{gather*}
Moreover, if $u_{0,k}\to u_0$ in $\hdtr$, the corresponding solutions $u_k\to u$ in $C(I;\hdtr)$.
\end{thm}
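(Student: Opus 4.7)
The plan is a standard Picard iteration on the Duhamel formula
$\Phi(v)(t) := e^{i(t-t_0)\Delta}u_0 - i\int_{t_0}^t e^{i(t-\tau)\Delta}(|v|^2 v)\,d\tau$,
carried out on the complete metric space
$X := \{v : \|D^{1/2}v\|_{W(I)} + \sup_{t\in I}\|D^{1/2}v(t)\|_{L^2_x} \le 2CA,\ \|v\|_{S(I)} \le 2\delta\}$
with distance $d(u,v) := \|D^{1/2}(u-v)\|_{W(I)} + \|u-v\|_{S(I)}$. The cubic nonlinearity $F(z) = |z|^2 z$ satisfies the hypotheses of Lemma \ref{lem23} with $|F'(z)| \lesssim |z|^2$ and $|F''(z)| \lesssim |z|$, so the fractional chain rule is available with $\alpha = 1/2$.

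The heart of the matter is the nonlinear bound
$\|D^{1/2}(|v|^2 v)\|_{L^{5/3}_I L^{30/23}_x} \le C \|v\|_{S(I)}^2 \|D^{1/2}v\|_{W(I)}$,
obtained by applying the spatial version of Lemma \ref{lem23} with $p_1 = 5/2$, $p_2 = 30/11$ to get $\|D^{1/2}(|v|^2 v)\|_{L^{30/23}_x} \lesssim \|v\|_{L^5_x}^2 \|D^{1/2}v\|_{L^{30/11}_x}$, and then integrating in time by Hölder with exponents $5/2$ and $5$. Inserting this into the inhomogeneous Strichartz estimates of Lemma \ref{lem21} with admissible pairs $(5, 30/11)$ and $(5/2, 30/7)$, and bounding the free term by Lemma \ref{lem21}(i), one obtains
$\|D^{1/2}\Phi(v)\|_{W(I)} + \sup_t\|D^{1/2}\Phi(v)(t)\|_{L^2_x} \le CA + C(2\delta)^2(2CA) \le 2CA$
once $\delta = \delta(A)$ is small. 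For the $S(I)$ component I apply the Sobolev embedding Lemma \ref{lem22} to the Duhamel part and reuse the same nonlinear bound: $\|\Phi(v) - e^{i(t-t_0)\Delta}u_0\|_{S(I)} \lesssim A\delta^2$, so $\|\Phi(v)\|_{S(I)} \le \delta + CA\delta^2 \le 2\delta$ after further smallness of $\delta$. This shows $\Phi : X \to X$.

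The contraction step uses the Lipschitz estimate (second inequality of Lemma \ref{lem23}) in the same Hölder configuration: each term in $D^{1/2}(F(u) - F(v))$ is linear in $u-v$ or $D^{1/2}(u-v)$ and carries at least one factor controlled by $\|u\|_{S(I)} + \|v\|_{S(I)} \le 4\delta$, yielding $d(\Phi(u), \Phi(v)) \le \tfrac{1}{2} d(u,v)$. The Banach fixed-point theorem then produces the unique $u \in X$; membership in $C(I; \hdt)$ follows from continuity of the retarded Strichartz integrals at the energy endpoint. For continuous dependence, $u_{0,k} \to u_0$ in $\hdt$ implies $\|e^{i(t-t_0)\Delta}(u_{0,k} - u_0)\|_{S(I)} \to 0$ by Lemmas \ref{lem21}(i) and \ref{lem22}, so the same scheme runs uniformly for large $k$, and subtracting the two Duhamel identities gives $\|u_k - u\|_{C(I; \hdt)} \lesssim \|u_{0,k} - u_0\|_{\hdt} \to 0$. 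The only real obstacle is the arithmetic bookkeeping---verifying that a single parameter $\delta$ simultaneously closes the self-mapping, contraction, and $S(I)$-smallness estimates---but once the exponents $(5, 30/11)$, $(5/2, 30/7)$, $(5/3, 30/23)$ are arranged as above, the argument is mechanical and contains no deeper difficulty.
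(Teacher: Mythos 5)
Your argument is the standard contraction-mapping proof via the Duhamel formula, the fractional chain rule, and exactly the Strichartz pairs $(5,30/11)$, $(5/2,30/7)$, $(5/3,30/23)$ that the paper itself indicates, so it matches the intended proof (which the paper only sketches by reference to \cite{CW}, \cite{KM}). The exponent bookkeeping checks out, and the remaining points you leave implicit (completeness of the ball under the weaker metric, uniqueness beyond the ball) are the usual routine technicalities.
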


\begin{rem}\label{rem25}
There exists $\tilde\delta$ such that if $||u_0||_\hdtr\leq\tilde\delta$, the conclusion of Theorem
\ref{thm24} holds. This is because of Lemmas \ref{lem21}, \ref{lem22}.
\end{rem}

\begin{rem}\label{rem26}
Given $u_0\in\hdt$, there exists $(0\in)I$ such that the hypothesis of Theorem \ref{thm24} is verified on $I$. This is clear from Lemmas \ref{lem21}, \ref{lem22}.
\end{rem}

\begin{dfn}\label{dfn27}
Let $t_0\in I$. We say that $u\in C(I,\hdtr)\cap\{D^{1/2}u\in W(I)\}$ is a solution of \eqref{cp} if
\begin{equation*}u\arrowvert_{t_0}=u_0\quad\quad\text{and}\quad\quad u(t)=e^{i(t-t_0)\Delta}u_0+
\int_{t_0}^t e^{i(t-t')\Delta}f(u)dt'\end{equation*}
with $f(u)=-|u|^2u$.

It is easy to see that solutions of \eqref{cp} are unique (see 2.10 in \cite{KM}, for example). This allows us to define a maximal interval $I(u_0)$, where the solution is defined. $I(u_0)=(t_0-T_-(u_0), t_0+T_+(u_0))$ and if $I'\subset\subset I(u_0)$, $u$ solves \eqref{cp} in $\R^3\times I'$, so that $u\in C(I';\hdtr)$, 
$D^{1/2}u\in W(I')$, $u\in S(I')$.
\end{dfn}

\begin{lem}[Standard finite blow-up criterion]\label{lem28}
If $T_+(u_0)<+\infty$, then 
\begin{equation*}||u||_{S([t_0,t_0+T_+(u_0)))}=+\infty.\end{equation*}
A corresponding result holds for $T_-(u_0)$.
\end{lem}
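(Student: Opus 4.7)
I argue by contrapositive: assume $T := T_+(u_0) < +\infty$ together with $\lVert u\rVert_{S([t_0,t_0+T))} < +\infty$. The goal is to extend $u$ as a solution of \eqref{cp} past time $t_0+T$, contradicting the maximality of $T_+(u_0)$.

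\emph{A priori bounds in $\hdt$.} Partition $[t_0,t_0+T)$ into finitely many subintervals $I_1,\dots,I_N$ on which $\lVert u\rVert_{S(I_j)} < \eta$, for a small $\eta$ to be fixed below (possible since the $S$-norm on $[t_0,t_0+T)$ is finite). On each $I_j = [a_j,a_{j+1}]$, Strichartz (Lemma \ref{lem21}) applied to the Duhamel formula, Sobolev embedding (Lemma \ref{lem22}), and the fractional chain rule (Lemma \ref{lem23}) with $p_1 = 5/2$, $p_2 = 30/11$ in space, followed by H\"older in time with exponents $5/2$ and $5$, yield the nonlinear estimate
\begin{equation*}
\lVert D^{1/2}(|u|^2 u)\rVert_{L^{5/3}_{I_j} L^{30/23}_x} \leq C\lVert u\rVert_{S(I_j)}^2 \lVert D^{1/2} u\rVert_{W(I_j)},
\end{equation*}
and hence the bootstrap
\begin{equation*}
\lVert D^{1/2} u\rVert_{W(I_j)} + \sup_{t\in I_j}\lVert D^{1/2} u(t)\rVert_{L^2} \leq C\lVert D^{1/2} u(a_j)\rVert_{L^2} + C\eta^2 \lVert D^{1/2} u\rVert_{W(I_j)}.
\end{equation*}
Fixing $\eta$ so that $C\eta^2 \leq 1/2$, I absorb the last term and iterate over $j = 1,\dots,N$ to conclude $\sup_{t\in[t_0,t_0+T)}\lVert u(t)\rVert_\hdt \leq A'$ and $\lVert D^{1/2} u\rVert_{W([t_0,t_0+T))} \leq B'$.

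\emph{Limit at the blow-up time and extension.} Writing $u(t) = e^{it\Delta}\Psi(t)$ with $\Psi(t) := e^{-it_0\Delta} u_0 + \int_{t_0}^t e^{-is\Delta} f(u)(s)\,ds$, Strichartz (iii) applied to $\Psi(t_2)-\Psi(t_1)$ on the slab $[t_1,t_2]$, together with the nonlinear bound above, gives
\begin{equation*}
\lVert \Psi(t_2)-\Psi(t_1)\rVert_\hdt \leq C\lVert u\rVert_{S([t_1,t_2])}^2 \lVert D^{1/2} u\rVert_{W([t_1,t_2])},
\end{equation*}
which tends to $0$ as $t_1,t_2 \to t_0+T$ by the integrability established in the previous step. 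Hence $\Psi(t) \to \Psi^*$ in $\hdt$, and by unitarity together with strong continuity of the Schr\"odinger group on $\hdt$, $u(t) \to u^* := e^{i(t_0+T)\Delta}\Psi^*$ in $\hdt$. Applying Remark \ref{rem26} to \eqref{cp} with initial datum $u^*$ at time $t_0+T$ yields a local solution on an open neighborhood of $t_0+T$; uniqueness (Definition \ref{dfn27}) allows gluing this with $u$, producing a solution on an interval strictly containing $(t_0-T_-(u_0),t_0+T)$, contradicting the maximality of $T_+(u_0)$. The statement for $T_-(u_0)$ is symmetric.

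The main obstacle is closing the bootstrap in the first step: it succeeds precisely because the Strichartz pair $(5, 30/11)$ and its dual $(5/3, 30/23)$ already used to prove Theorem \ref{thm24} are compatible with the fractional chain rule applied to the cubic nonlinearity, with no derivative loss. Once those a priori bounds are in hand, the Cauchy-sequence argument at the blow-up time and the final extension are routine consequences of the finite $S$- and $W$-norms and the local Cauchy theory.
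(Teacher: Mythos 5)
Your argument is correct and is essentially the standard proof the paper points to (the paper gives no proof itself, citing \cite{KM}, Lemma 2.11): finite $S$-norm plus the partition/absorption argument yields uniform $\hdt$ and $W(I)$ bounds, and one then extends the solution past $t_0+T_+(u_0)$, contradicting maximality. The only minor differences are in the endgame --- the cited argument extends by showing $\lVert e^{i(t-t')\Delta}u(t')\rVert_{S}$ is small on an interval reaching past $t_0+T_+(u_0)$ for $t'$ near the endpoint and then invoking Theorem \ref{thm24}, whereas you construct the strong limit $u^*$ at the endpoint and restart from it (both routine) --- and, as a small technical point, on the final subinterval the absorption should first be run on compact subintervals $[a_N,\tau]$ with $\tau<t_0+T_+(u_0)$, where $\lVert D^{1/2}u\rVert_{W}$ is known to be finite by Definition \ref{dfn27}, before letting $\tau$ increase to the endpoint.
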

See \cite{KM}, Lemma 2.11, for instance, for a similar proof.

\begin{rem}[See Remark 2.15 in \cite{KM}]\label{rem29}
If $u$ is a solution of \eqref{cp} in $\R^3\times I$, $I=[a,+\infty)$ (or $I=(-\infty, a]$), there exists 
$u_+\in\hdt$ such that
\begin{equation*}\lim_{t\uparrow+\infty}||u(t)-\ei u_+||_\hdt=0\end{equation*}
This is a consequence of the fact that $||u||_{S(I)}<\infty$.
\end{rem}

In the next section we will also need the notion of nonlinear profile.
\begin{dfn}\label{dfn210}
Let $v_0\in\hdt$, $v(t)=\ei v_0$ and let $\{t_n\}$ be a sequence, with 
$ \lim_{n\to\infty}t_n=\overline t\in[-\infty,+\infty]$. We say that $u(x,t)$ is a nonlinear profile associated with $(v_0,\{t_n\})$ if there exists an interval $I$, with $\overline t\in I$
(if $\overline t=\pm\infty$, $I=[a,+\infty)$ or $I=(-\infty, a]$) such that $u$ is a solution of \eqref{cp}
in $I$ and
\begin{equation*}\lim_{n\to\infty}||u(\cdot,t_n)-v(\cdot,t_n)||_\hdt=0.\end{equation*}
\end{dfn}

\begin{rem}\label{rem211}
There always exists a unique nonlinear profile associated to $(v_0,$ $\{t_n\})$. (For a proof, see the analogous one in Remark 2.13, \cite{KM}). We can hence define a maximal interval $I$ of existence for the nonlinear profile associated to $(v_0,\{t_n\})$.
\end{rem}

We conclude this section with a perturbation theorem that is fundamental in the sequel. For a proof of this theorem, see \cite{HR}, Proposition 2.3.

\begin{thm}[Perturbation theorem]\label{thm212}
Let $I\subset\R$ be a time interval and let $t_0\in I$. Let $\tilde u$ be defined on $\R^3\times I$ such that 
$ \sup_{t\in I}||\tilde u(t)||_\hdt\leq A$, $||\tilde u||_{S(I)}\leq M$, 
$||D^{1/2}\tilde u||_{W(I)}<+\infty$, for some constants $M,A>0$. Assume that
\begin{equation*}i\partial_t\tilde u+\Delta\tilde u-|\tilde u|^2\tilde u=e\quad\quad (x,t)\in\R^3\times I\end{equation*}
(in the sense of the appropriate integral equation) and let $u_0\in\hdt$ be such that 
$||u_0-\tilde u(t_0)||_\hdt\leq A'$. 

Then, there exists $\epsilon_0=\epsilon_0(M,A,A')>0$ such that if $0<\epsilon\leq\epsilon_0$ and
\begin{equation*}||D^{1/2}e||_{L_I^{5/3}L_x^{30/23}}\leq\epsilon,\quad\quad
||e^{i(t-t_0)\Delta}[u_0-\tilde u(t_0)]||_{S(I)}\leq\epsilon,\end{equation*}
 there exists a unique solution $u$ of \eqref{cp} on $\R^3\times I$, such that $u\arrowvert_{t=t_0}=u_0$ and
\begin{gather*}
||u||_{S(I)}\leq C(A,A',M),\quad\quad ||u-\tilde u||_{S(I)}\leq C(A,A',M)(\epsilon+\epsilon'),\\
\sup_{t\in I}||u(t)-\tilde u(t)||_\hdt+||D^{1/2}(u-\tilde u)||_{W(I)}\leq C(A,A',M)(A'+\epsilon+\epsilon')
\end{gather*}
where $\epsilon'=\epsilon^\beta$, for some $\beta>0$.
\end{thm}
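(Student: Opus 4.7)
The plan is the standard ``partition and iterate'' perturbation argument. Set $w=u-\tilde u$ and $F(z)=|z|^2z$, so that $w$ satisfies, via Duhamel,
\begin{equation*}
w(t)=e^{i(t-t_0)\Delta}(u_0-\tilde u(t_0))-i\int_{t_0}^{t}e^{i(t-t')\Delta}\bigl[F(\tilde u+w)-F(\tilde u)-e\bigr](t')\,dt'.
\end{equation*}
First I partition $I$ into consecutive subintervals $I_0,\dots,I_{K-1}$, $I_j=[t_j,t_{j+1}]$, with $\|\tilde u\|_{S(I_j)}\le\eta$ for a small $\eta=\eta(A)>0$ to be fixed below. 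Since $\|\tilde u\|_{S(I)}\le M$, this costs only $K\le(M/\eta)^5+1$ pieces, depending on $A$ and $M$. A preliminary Strichartz estimate applied to $\tilde u$ itself, using the equation $i\partial_t\tilde u+\Delta\tilde u=F(\tilde u)+e$ together with Lemmas~\ref{lem21}--\ref{lem23}, shows that $\|D^{1/2}\tilde u\|_{W(I_j)}$ is uniformly bounded on each subinterval by a constant $C(A)$ once $\eta$ is small enough.

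On each $I_j$ I run a fixed-point contraction. The inhomogeneous Strichartz estimate of Lemma~\ref{lem21}, applied at the $D^{1/2}$ level with output pair $(5,30/11)$ and forcing in the dual of the admissible pair $(5/2,30/7)$ --- that is $L^{5/3}_tL^{30/23}_x$, which is exactly the norm in the hypothesis on $e$ --- reduces matters to bounding $\|D^{1/2}[F(\tilde u+w)-F(\tilde u)]\|_{L^{5/3}_{I_j}L^{30/23}_x}$. The fractional chain rule of Lemma~\ref{lem23} applied to $F$ (with $|F'(z)|\lesssim|z|^2$, $|F''(z)|\lesssim|z|$), together with H\"older and the Sobolev embedding of Lemma~\ref{lem22}, produces the schematic bound
\begin{equation*}
\|D^{1/2}[F(\tilde u+w)-F(\tilde u)]\|_{L^{5/3}_{I_j}L^{30/23}_x}\le C(A)\bigl(\eta+\|w\|_{S(I_j)}\bigr)^{2}\|D^{1/2}w\|_{W(I_j)}.
\end{equation*}
Setting $\alpha_j=\|w(t_j)\|_\hdt$ and $\sigma_j=\|e^{i(t-t_j)\Delta}w(t_j)\|_{S(I_j)}$, a standard bootstrap on the pair $(\|w\|_{S(I_j)},\|D^{1/2}w\|_{W(I_j)})$, with $\eta$ small enough, yields, provided $\sigma_j+\epsilon$ is small, the bounds $\|w\|_{S(I_j)}\le C(A)(\sigma_j+\epsilon)$ and $\|D^{1/2}w\|_{W(I_j)}+\sup_{t\in I_j}\|w(t)\|_\hdt\le C(A)(\alpha_j+\epsilon)$.

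Finally I iterate over $j=0,\dots,K-1$. Strichartz~iii) applied at $t=t_{j+1}$, combined with the estimate $\|e^{i(t-t_{j+1})\Delta}h\|_{S(I)}\le C\|h\|_\hdt$ from Strichartz~i) and Lemma~\ref{lem22}, gives recursions of schematic form
\begin{equation*}
\alpha_{j+1}\le\alpha_j+C(A)\bigl(\eta+\sigma_j+\epsilon\bigr)^{2}(\alpha_j+\epsilon)+C\epsilon,\qquad\sigma_{j+1}\le\sigma_j+\text{(similar)},
\end{equation*}
with initial data $\alpha_0\le A'$ and $\sigma_0\le\epsilon$ from the hypothesis. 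With $\eta=\eta(A)$ fixed and $\epsilon_0=\epsilon_0(M,A,A')$ small enough, discrete Gronwall over $K=K(A,M)$ steps keeps $\alpha_j\le C(A,M)(A'+\epsilon)$ and $\sigma_j\le C(A,M)\epsilon$ throughout; summing the per-interval bounds yields the stated conclusions, and the exponent $\epsilon'=\epsilon^\beta$ for some $\beta>0$ arises from an interpolation between the small $S$-difference ($\sim\epsilon$) and the possibly large $\hdt$-difference ($\sim A'$) at the $D^{1/2}w\in W(I)$ level. The main obstacle is the iteration bookkeeping, particularly tracking the $K$-fold compounded factor $e^{C(A)K\eta^2}$ in the Gronwall step and verifying that it remains a function only of $(A,A',M)$; the essential feature that makes everything close is precisely that the hypothesis controls the $S(I)$-norm of the linear evolution of $u_0-\tilde u(t_0)$ (which propagates smallness between subintervals) rather than the $\hdt$-norm of $u_0-\tilde u(t_0)$ (which is only bounded by $A'$ and would not be small enough to absorb).
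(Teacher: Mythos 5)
There is a genuine gap, and it sits exactly at the point your closing paragraph identifies as the crucial feature. The hypothesis gives smallness only of the \emph{derivative-free} norm $\|e^{i(t-t_0)\Delta}[u_0-\tilde u(t_0)]\|_{S(I)}$, while $\|u_0-\tilde u(t_0)\|_\hdt$ is merely bounded by $A'$, which in the paper's applications (Steps 2 and 5 of the proof of Proposition \ref{pp33}) is of size $A_C$, not small. Your scheme, however, reaches the $S$-norm of $w=u-\tilde u$ only through the derivative level: both the per-interval bound $\|w\|_{S(I_j)}\le C(A)(\sigma_j+\epsilon)$ and the recursion for $\sigma_{j+1}$ are obtained by estimating the Duhamel term via $\|D^{1/2}[F(\tilde u+w)-F(\tilde u)]\|_{L^{5/3}_{I_j}L^{30/23}_x}$ together with Lemma \ref{lem22}. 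But the fractional Leibniz/chain rule necessarily produces the term $\||\tilde u|^2D^{1/2}w\|_{L^{5/3}_{I_j}L^{30/23}_x}\lesssim\eta^2\|D^{1/2}w\|_{W(I_j)}$, and $\|D^{1/2}w\|_{W(I_j)}$ is only of size $C(A)(\alpha_j+\epsilon)\sim A'$. So already on the first subinterval $\|w\|_{S(I_0)}$ picks up a contribution $\sim C(A)\eta^2A'$, a fixed constant independent of $\epsilon$ (and $\eta$ cannot be taken small depending on $A'$ without circularly increasing $K$ and hence $\alpha_j$). Consequently the invariant $\sigma_j\le C(A,M)\epsilon$ does not propagate, the hypothesis ``$\sigma_j+\epsilon$ small'' needed for your bootstrap fails after the first step, and the conclusion $\|u-\tilde u\|_{S(I)}\le C(A,A',M)(\epsilon+\epsilon^\beta)$ does not follow from the argument as written; your schematic difference bound $C(A)(\eta+\|w\|_{S(I_j)})^2\|D^{1/2}w\|_{W(I_j)}$ silently treats $\|D^{1/2}w\|_W$ as if it were controllable by small quantities, which it is not when $A'$ is large. (The invocation of ``interpolation between the small $S$-difference and the large $\hdt$-difference'' to produce $\epsilon^\beta$ is circular here, since the small $S$-difference is precisely what is at stake.)

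The paper does not prove Theorem \ref{thm212} but cites Holmer--Roudenko, Proposition 2.3, and the way that proof avoids the obstruction is structural: the smallness estimate for $\|w\|_{S}$ is closed entirely at the derivative-free level, using non-admissible (Kato/Foschi-type) inhomogeneous Strichartz estimates adapted to the $\hdt$ scaling, of the schematic form $\left\lVert\int_{t_j}^t e^{i(t-s)\Delta}F\,ds\right\rVert_{L^5_{t,x}}\le C\|F\|_{L^{5/3}_tL^{q}_x}$ for a suitable non-dual-admissible $q$. With such an estimate the linear-in-$w$ term is bounded by $\|\tilde u\|_{S(I_j)}^2\|w\|_{S(I_j)}\le\eta^2\|w\|_{S(I_j)}$ with no derivative norm of $w$ appearing (the hypothesis $\|D^{1/2}e\|_{L^{5/3}L^{30/23}}\le\epsilon$ controls the corresponding error norm by Sobolev embedding), so the smallness genuinely propagates over the $K(A,M)$ subintervals; only afterwards are the $W(I)$ and $\hdt$ bounds recovered by your derivative-level Strichartz argument, now with the $S$-smallness in hand, and this is where the power loss $\epsilon'=\epsilon^\beta$ and the $C(A,A',M)(A'+\epsilon+\epsilon')$ bound arise. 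If you restrict to small $A'$ your argument does close (and even gives $\beta=1$), but that case is insufficient for the paper's use of the theorem; to handle general $A'$ you must either import the non-admissible inhomogeneous estimates or strengthen the hypotheses to smallness at the $D^{1/2}$ level, which is not what is assumed.
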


\begin{rem}\label{rem213}
Theorem \ref{thm212} also yields the following continuity fact: let $\tilde u_0\in\hdt$, 
$||\tilde u_0||_\hdt\leq A$, and $\tilde u$ be a solution of \eqref{cp}, $t_0=0$, with maximal interval of existence $(-T_-(u_0),$ $T_+(u_0))$. Let $u_{0,n}\to\tilde u_0$ in $\hdt$, and let $u_n$be the corresponding solution of \eqref{cp}, with maximal interval of existence $(-T_-(u_{0,n}), T_+(u_{0,n}))$. Then
$-T_-(u_0)\geq-T_-(u_{0,n})$, for all $n$ large and $T_+(u_0)\leq T_+(u_{0,n})$, for all $n$ large. Moreover, for each $t\in(-T_-(u_0), T_+(u_0))$, $u_n(t)\to\tilde u(t)$ in $\hdt$. (See Remark 2.17 in \cite{KM}.)
\end{rem}

\begin{rem}\label{rem214}
Theorem \ref{thm212} also yields the following: Let $K\subset\hdt$ be such that $\overline K$ is compact. Then
$\exists\; T_K^+, T_K^-$ such that, for all $u_0\in K$ we have $T_+(u_0)> T_K^+$, $T_-(u_0)> T_K^-$. Moreover, the family $\{u(t): t\in[-T_K^-, T_K^+], u_0\in K\}$ has compact closure in $C([-T_K^-, T_K^+];\hdt)$
and hence is equicontinuous and bounded.
\end{rem}

\section{Concentration-compactness procedure}

In this section we will carry out the concentration-compactness argument which, combined with the rigidity theorem in the next section, will yield our result.This procedure is similar to the one the authors developed in \cite{KM}, \cite{KM2}, but with one important distinction. Here we do not use any conservation law, which makes the proof necessarily more delicate. The argument we use here should have further applications. For instance, it can be applied to yield a proof of Corollary 5.16 in \cite{KM} and of Corollary 7.4 in \cite{KM2}.

\begin{dfn}\label{dfn31}
For $A>0$, 
\begin{multline*}
B(A)=\{u_0\in\hdt:\text{if }
u\text{ is the solution of \eqref{cp}, equal}\\\text{ to }u_0\text{ at }t=0,
\text{ then }\sup_{t\in\left[0,T_+(u_0)\right)}||D^{1/2}u(t)||_{L^2}\leq A\}.
\end{multline*}
$  B(\infty)=\bigcup_{A>0}B(A)$.
\end{dfn}

\begin{dfn}\label{dfn32}
We say that $SC(A)$ holds if for each $u_0\in B(A)$, $T_+(u_0)=+\infty$ and $||u||_{S(0,+\infty)}<\infty$.
We also say that $SC(A;u_0)$ holds if $u_0\in B(A)$, $T_+(u_0)=+\infty$ and $||u||_{S(0,+\infty)}<\infty$.
\end{dfn}

By Theorem \ref{thm24} and Lemma \ref{lem21}, Lemma \ref{lem22}, we see that, for $\tilde\delta_0$ small enough, we have that if $||u_0||_\hdt\leq\tilde\delta_0$, then $SC(C\tilde\delta_0,u_0)$ holds. By a similar argument, there exists $A_0>0$ small enough, such that $SC(A_0)$ holds. Our main result, Theorem \ref{thm11} is equivalent to the statement that $SC(A)$ holds for each $A>0$. Thus, if Theorem \ref{thm11} fails, there exists a critical value $A_C$ with the property that, if $A<A_C$, $SC(A)$ holds, but if $A>A_C$, $SC(A)$ fails. Moreover, $A_C>A_0$. The concentration-compactness procedure consists in establishing the following key propositions:

\begin{pp}\label{pp33}
There exists $u_{0,C}$ such that $SC(A_C;u_{0,C})$ fails.
\end{pp}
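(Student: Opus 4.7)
The plan is to argue by contradiction along the concentration-compactness lines of \cite{KM}, adapted to the lack of an $\hdt$ conservation law. Suppose no such $u_{0,C}$ exists. By the definition of $A_C$, for each $n$ there exists $u_{0,n}\in B(A_C+1/n)$ for which $SC(A_C+1/n;u_{0,n})$ fails; in particular, by Lemma \ref{lem28}, the corresponding forward solution $u_n$ satisfies $\lVert u_n\rVert_{S([0,T_+(u_{0,n})))}=+\infty$, and of course $\lVert u_{0,n}\rVert_\hdt\leq A_C+1/n$.

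The first step is to apply a Keraani-type linear profile decomposition in $\hdt(\R^3)$ to the bounded sequence $\{u_{0,n}\}$:
\[
u_{0,n}(x)=\sum_{j=1}^{J}\frac{1}{\lambda_n^j}V_L^j\!\left(\frac{x-x_n^j}{\lambda_n^j},\,-\frac{t_n^j}{(\lambda_n^j)^2}\right)+W_n^J(x),
\]
with $V_L^j(y,s)=e^{is\Delta}V^j(y)$, pairwise asymptotically orthogonal parameters $(\lambda_n^j,x_n^j,t_n^j)$, and $\limsup_n\lVert\ei W_n^J\rVert_{S(\R)}\to 0$ as $J\to\infty$. Orthogonality at the initial time gives the Pythagorean identity $\sum_{j\leq J}\lVert V^j\rVert_\hdt^2+\lVert W_n^J\rVert_\hdt^2=\lVert u_{0,n}\rVert_\hdt^2+o(1)$, and the same sort of orthogonality can be propagated to later times by combining the parameter orthogonality with the Perturbation Theorem \ref{thm212}.

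Next, to each $V^j$ I associate its nonlinear profile $U^j$ (Definition \ref{dfn210} and Remark \ref{rem211}) and set $N_j:=\sup_{t\in I^j_+}\lVert D^{1/2}U^j(t)\rVert_{L^2}$ over the relevant forward portion of the maximal interval (chosen according to the limit of $-t_n^j/(\lambda_n^j)^2$). The crux is a dichotomy. If $N_j<A_C$ for every $j$, then by the minimality of $A_C$ each $U^j$ scatters with $\lVert U^j\rVert_{S(\R)}<+\infty$; assembling the approximate solution
\[
\tilde u_n^J(x,t):=\sum_{j\leq J}\frac{1}{\lambda_n^j}U^j\!\left(\frac{x-x_n^j}{\lambda_n^j},\frac{t-t_n^j}{(\lambda_n^j)^2}\right)+\ei W_n^J
\]
and invoking Theorem \ref{thm212} with error vanishing as $n,J\to\infty$ would force $\lVert u_n\rVert_{S([0,\infty))}<+\infty$, contradicting the choice of $u_{0,n}$. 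Hence at least one $N_j=A_C$; orthogonality of the $\hdt$ budget at $t=0$ then forces this to occur for a single index, say $j=1$, with $V^k\equiv 0$ for $k\geq 2$ and $W_n^1\to 0$ in $\hdt$. Along a subsequence, and modulo the symmetry group, the limiting profile yields the desired $u_{0,C}$.

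The principal obstacle is establishing the uniform bound $N_j\leq A_C$ for every profile in the absence of a conserved $\hdt$ quantity. In the mass- or energy-critical Kenig--Merle framework this bound falls out of the orthogonality of the conserved quantity evaluated at $t=0$; here $\lVert D^{1/2}u(t)\rVert_{L^2}$ fluctuates along the flow, so one must transfer the hypothesis $\sup_t\lVert D^{1/2}u_n(t)\rVert_{L^2}\leq A_C+o(1)$ into a bound on $U^j$ throughout its lifespan. This is accomplished by a bootstrap: if $N_j$ were to exceed $A_C$ at some interior time $t^\ast\in I^j_+$, combining the time-dependent Pythagorean expansion with Theorem \ref{thm212} would produce a corresponding time where $\lVert D^{1/2}u_n\rVert_{L^2}>A_C+o(1)$, contradicting $u_{0,n}\in B(A_C+1/n)$. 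Carrying out this bootstrap is the delicate point alluded to in the opening of this section and the place where the argument departs from \cite{KM},\cite{KM2}.
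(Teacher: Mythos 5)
You set up the argument exactly as the paper does (a sequence $u_{0,n}\in B(A_C+1/n)$ with $\lVert u_n\rVert_{S(0,T_+(u_{0,n}))}=+\infty$, the profile decomposition of Lemma \ref{lem35}, nonlinear profiles, and Theorem \ref{thm212} to exclude the case in which every profile scatters forward), and you correctly identify that the whole difficulty is the upper bound $\sup_t\lVert U^{j}(t)\rVert_\hdt\le A_C$ for a non-scattering profile, since $\lVert D^{1/2}u\rVert_{L^2}$ is not conserved. But that bound is precisely what you do not prove: the ``time-dependent Pythagorean expansion'' your bootstrap invokes is itself the statement that needs a proof, and it is the heart of the paper's argument (Steps 3--7, culminating in \eqref{eq329}). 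To make it work one must (a) approximate $u_n$ by $\sum_j\tilde U_{j,n}+w_n^{l,J}$ via Theorem \ref{thm212} only on a time window $[0,t_k^n]$ on which \emph{all} the non-scattering profiles are still defined, which forces the truncated times $T^+_{j,k}$, $t_k^n=\min_j t^n_{j,k}$ and a double diagonal extraction (Steps 5 and 6) so that the window is governed by the very profile $j_0$ one wants to test; (b) evaluate at a time $\tau^{n}_{j_0,k}$ where $\lVert U_{j_0}(t)\rVert_\hdt$ nearly attains its supremum; and (c) prove the decoupling of the $\hdt$ norms of the \emph{evolved} profiles and of $w_n^{l,J}$ at that time (Step 7), a separate orthogonality argument requiring the case analysis behind \eqref{eq319}, Lemma \ref{lem36}, and the scattering of profiles along diverging rescaled times. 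None of this follows from Theorem \ref{thm212} alone, and your one-sentence bootstrap does not address the main obstruction: the time at which $U^{j}$ would exceed $A_C$ may lie outside the interval on which the superposition approximates $u_n$, because another bad profile may blow up earlier in the rescaled time of $u_n$.

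A second, more localized error: having found a profile with $N_{j}=A_C$, you assert that orthogonality of the $\hdt$ budget at $t=0$ forces single-profile concentration, with $V^k\equiv0$ for $k\ge2$ and $W_n^1\to0$. Without a conserved $\hdt$ quantity the $t=0$ identity only controls $\sum_j\lVert V_{0,j}\rVert^2_\hdt$, and $\lVert V_{0,j_0}\rVert_\hdt$ can be much smaller than $\sup_t\lVert U_{j_0}(t)\rVert_\hdt=A_C$, so the initial-time budget does not kill the other profiles. In the paper this vanishing is again derived from the later-time expansion \eqref{eq329} (Remark \ref{rem334}), and in any case it belongs to Proposition \ref{pp34} rather than Proposition \ref{pp33}: for the present statement it suffices to produce one non-scattering profile lying in $B(A_C)$, which is exactly the content of the bound you left unproved.
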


\begin{pp}\label{pp34}
If $u_{0,C}$ is as in Proposition \ref{pp33}, then there exist $x(t)\in\R^3$, $\lambda(t)\in\R^+$, for 
$t\in[0,T_+(u_0))$, such that 
\begin{equation*}  K=\left\{v(x,t)=\frac{1}{\lambda(t)}
u_C\left(\frac{x-x(t)}{\lambda(t)}, t\right)\right\},
\end{equation*} 
$t\in[0,T_+(u_{0,C}))$, has the property that $\overline K$ is compact in $\hdt$. Here $u_C$ is the solution of \eqref{cp} with data $u_{0,C}$ at $t=0$.
\end{pp}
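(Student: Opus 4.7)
The plan is to argue by contradiction, using a linear/nonlinear profile decomposition in $\hdtr$ and exploiting the minimality of $A_C$ through the perturbation theorem (Theorem \ref{thm212}).

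Suppose $\overline K$ is not compact in $\hdt$. Then there exist a sequence $t_n\in[0,T_+(u_{0,C}))$ and $\eta_0>0$ such that, for every choice of $(\lambda_n,x_n)\in\R^+\times\R^3$, no subsequence of $\frac{1}{\lambda_n}u_C\bigl(\frac{\cdot-x_n}{\lambda_n},t_n\bigr)$ is Cauchy in $\hdt$. Passing to a subsequence we may assume $t_n\to\bar t\in[0,T_+(u_{0,C})]$. I would then apply a linear profile decomposition of Bahouri--G\'erard/Keraani type to the bounded sequence $\{u_C(\cdot,t_n)\}\subset\hdtr$: this yields profiles $V^j\in\hdtr$, pairwise orthogonal frames $(\lambda_{j,n},x_{j,n},s_{j,n})$, and remainders $w_n^J$ such that
\begin{equation*} u_C(\cdot,t_n)=\sum_{j=1}^J\frac{1}{\lambda_{j,n}}(e^{is_{j,n}\Delta}V^j)\!\left(\frac{\cdot-x_{j,n}}{\lambda_{j,n}}\right)+w_n^J, \end{equation*}
with $\limsup_n\|\ei w_n^J\|_{S(\R)}\to 0$ as $J\to\infty$ and the Pythagorean expansion $\|u_C(\cdot,t_n)\|_\hdt^2=\sum_{j\le J}\|V^j\|_\hdt^2+\|w_n^J\|_\hdt^2+o_n(1)$.

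To each $V^j$ I associate the nonlinear profile $U^j$ (Definition \ref{dfn210}) with base sequence $-s_{j,n}/\lambda_{j,n}^2$. The core claim is that exactly one profile $V^1$ is nontrivial, that $\|V^1\|_\hdt=A_C$, and that $w_n\to 0$ in $\hdt$. If this fails, then the Pythagorean identity forces $\|V^j\|_\hdt<A_C$ strictly for every $j$ (and similarly for the limit $\hdt$-mass of $w_n$), so by the definition of $A_C$ each $U^j$ satisfies $SC(A_C-\varepsilon_0;V^j)$ for some $\varepsilon_0>0$, hence has $\|U^j\|_{S(0,\infty)}<\infty$ with $\sup_t\|D^{1/2}U^j(t)\|_{L^2}$ bounded in terms of $\|V^j\|_\hdt$. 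Using this, I would form the approximate solution
\begin{equation*} \tilde u_n(x,t)=\sum_{j\le J_0}\frac{1}{\lambda_{j,n}}U^j\!\left(\frac{x-x_{j,n}}{\lambda_{j,n}},\frac{t-t_n}{\lambda_{j,n}^2}-\frac{s_{j,n}}{\lambda_{j,n}^2}\right)+e^{i(t-t_n)\Delta}w_n^{J_0}. \end{equation*}
Asymptotic orthogonality of the frames makes the cross-terms in $|\tilde u_n|^2\tilde u_n$ negligible in $L_I^{5/3}L_x^{30/23}$, so $\tilde u_n$ satisfies the hypotheses of Theorem \ref{thm212} on $[t_n,T_+(u_{0,C}))$ with small error. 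The perturbation theorem then yields $\|u_C\|_{S([t_n,T_+(u_{0,C})))}<\infty$ for $n$ large, which combined with Lemma \ref{lem28} contradicts the failure of $SC(A_C;u_{0,C})$ (Proposition \ref{pp33}).

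Therefore only $V^1$ is nontrivial and $w_n\to 0$ in $\hdt$. Passing to a subsequence we may assume $s_{1,n}/\lambda_{1,n}^2\to s^\star\in[-\infty,+\infty]$; the finite case is absorbed into the profile by replacing $V^1$ with $e^{is^\star\Delta}V^1$, while $|s^\star|=\infty$ is handled via Remark \ref{rem29} together with Theorem \ref{thm212}. In every case, setting $x(t_n)=x_{1,n}$, $\lambda(t_n)=\lambda_{1,n}$, the rescaled sequence $\frac{1}{\lambda(t_n)}u_C\bigl(\frac{\cdot-x(t_n)}{\lambda(t_n)},t_n\bigr)$ is Cauchy in $\hdt$, contradicting the initial assumption. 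Running the argument for an arbitrary sequence $\{t_n\}\subset[0,T_+(u_{0,C}))$ produces a parametrization by $t\in[0,T_+(u_{0,C}))$ with the required precompactness. The main obstacle is the bookkeeping in the non-conservation-law setting: one must control $\sup_t\|D^{1/2}U^j(t)\|_{L^2}$ directly via Theorem \ref{thm24} in order to invoke the definition of $SC(\cdot;\cdot)$ for each profile, rather than by the energy-trapping that was available in \cite{KM}, \cite{KM2}.
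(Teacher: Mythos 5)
Your overall skeleton---apply the profile decomposition to $u_C(\cdot,t_n)$ for an arbitrary sequence of times, show that exactly one profile survives with vanishing remainder, and convert this sequential statement into the existence of $\lambda(t),x(t)$---is the same as the paper's, which reduces Proposition \ref{pp34} to the argument of Proposition 4.2 of \cite{KM} together with Remark \ref{rem334}. However, the way you rule out multiple profiles contains a genuine gap. You argue that if more than one profile (or a nonvanishing remainder) appears, then the Pythagorean expansion gives $\|V^j\|_\hdt<A_C$ strictly, ``so by the definition of $A_C$ each $U^j$ satisfies $SC(A_C-\varepsilon_0;V^j)$.'' This inference is exactly what is unavailable here: $B(A)$ and $SC(A)$ (Definitions \ref{dfn31}, \ref{dfn32}) are defined through $\sup_{t\in[0,T_+(u_0))}\|D^{1/2}u(t)\|_{L^2}\leq A$, i.e.\ through the supremum in time of the $\hdt$ norm, and there is no conserved quantity at the $\hdt$ level that lets you pass from the bound $\|V^j\|_\hdt<A_C$ on the datum to a bound on $\sup_t\|U^j(t)\|_\hdt$, which is what membership in $B(A_C-\varepsilon_0)$ requires. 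Theorem \ref{thm24}, which you invoke at the end to ``control $\sup_t\|D^{1/2}U^j(t)\|_{L^2}$ directly,'' only does so under a smallness hypothesis on $\|e^{i(t-t_0)\Delta}u_0\|_{S(I)}$; it says nothing about a profile whose data norm is merely below $A_C$. This is precisely the ``important distinction'' flagged at the start of Section 3 (``Here we do not use any conservation law, which makes the proof necessarily more delicate''), so the step where your contradiction is generated does not go through as written.

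The paper's substitute for the missing conservation law is the long argument in the proof of Proposition \ref{pp33}: instead of comparing data norms with $A_C$, one compares suprema in time, choosing the special times $\tau_{j_0,k}^{n}$ at which $\|U_{j_0}(t)\|_\hdt$ nearly attains its supremum and proving the orthogonal expansion \eqref{eq329} of the $\hdt$ norm of $u_n$ at those times (Steps 3--7). This yields both that the critical profile satisfies $\sup_t\|U_{j_0}(t)\|_\hdt=A_C$ (cf.\ \eqref{eq328}) and, as recorded in Remark \ref{rem334}, that all other profiles vanish and $w_n^J\to0$ in $\hdt$ (the smallness of $\|\tilde U_{j,n}(\tau_{j_0,k}^{n})\|_\hdt$ is upgraded to $U_j\equiv0$ via Theorem \ref{thm24}). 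To prove Proposition \ref{pp34} you must run this machinery on the sequence $\{u_C(\cdot,t_n)\}$ (for which $\sup_t\|u_C(t)\|_\hdt\leq A_C$ and $\|u_C\|_{S(t_n,T_+(u_{0,C}))}=+\infty$), not the conservation-law shortcut; once the conclusion of Remark \ref{rem334} is available for that sequence, your reduction of compactness to the single-profile statement, your treatment of the limit of $s_{1,n}$, and your construction of the modulation parameters are fine and coincide with the argument of Proposition 4.2 in \cite{KM}. A minor further point: the claim $\|V^1\|_\hdt=A_C$ is neither needed nor generally true; what the argument gives is that the supremum in time of the nonlinear profile's $\hdt$ norm equals $A_C$.
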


The key tool in in the proof of Proposition \ref{pp33} and \ref{pp34} is the following ``profile decomposition''.

\begin{lem}\label{lem35}
Given $\{v_{0,n}\}\subseteq\hdt$, with $||v_{0,n}||_\hdt\leq A$, there exist a sequence 
$\{V_{0,j}\}_{j=1}^\infty\subseteq\hdt$, a subsequence of $\{v_{0,n}\}$, and a sequence of triples $(\lambda_{j,n};x_{j,n};t_{j,n})\in\R^+\times\R^3\times\R$, which are ``orthogonal'' i.e.
\begin{equation*}
\frac{\lambda_{j,n}}{\lambda_{j',n}}+\frac{\lambda_{j',n}}{\lambda_{j,n}}+\frac{|t_{j,n}-t_{j',n}|}{\lambda_{j,n}^2}
+\frac{|x_{j,n}-x_{j',n}|}{\lambda_{j,n}}\rightarrow\infty
\end{equation*}
as $n\to\infty$, for $j\neq j'$, such that, for each $J\geq1$, we have
\begin{itemize}
\item[(i)] 
$  v_{0,n}=\sum_{j=1}^J\frac{1}{\lambda_{j,n}}V_j^l\left(\frac{\cdot-x_{j,n}}{\lambda_{j,n}},
-\frac{t_{j,n}}{\lambda_{j,n}^2}\right)+w_n^J$\\
where $V_j^l(x,t)=\ei V_{0,j}$ ($l$ stands for linear solution) and
\item[(ii)] $ \varlimsup_{n\to\infty}||\ei w_n^J||_{S(-\infty,+\infty)}\xrightarrow[J\to\infty]{}0$, and
\item[(iii)] For each $J\geq1$ we have 
\begin{equation*} 
||v_{0,n}||_\hdt^2=\sum_{j=1}^J||V_{0,j}||_\hdt^2+||w_n^J||_\hdt^2+\epsilon^J(n)
\end{equation*}
where $\epsilon^J(n)\xrightarrow[n\to\infty]{}0$.
\end{itemize}
\end{lem}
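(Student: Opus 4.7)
The plan is to prove Lemma~\ref{lem35} by the iterative profile extraction of Bahouri--G\'erard, in the form adapted by Keraani to $\dot H^s$--critical Schr\"odinger equations. The argument has three parts: a single--profile extraction lemma, a Pythagorean identity in $\hdt$ driving the iteration, and a separate verification that the triples one produces are asymptotically orthogonal.

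The analytic heart is the single--profile extraction, which I would prove first: there exists a function $\psi$, strictly positive on $(0,\infty)$, such that whenever $\{f_n\}\subset\hdt$ satisfies $\|f_n\|_\hdt\le A$ and $\varlimsup_n\|\ei f_n\|_{S(\R)}\ge \eta$, then, after extracting a subsequence, there exist $V_0\in\hdt$ with $\|V_0\|_\hdt\ge \psi(\eta)$ and parameters $(\lambda_n,x_n,t_n)\in\R^+\times\R^3\times\R$ for which
\begin{equation*}
g_n(y)\,:=\,\lambda_n\bigl(e^{it_n\Delta}f_n\bigr)(\lambda_n y+x_n)\ \rightharpoonup\ V_0\quad\text{weakly in}\ \hdt.
\end{equation*}
A short calculation using the commutation of $\ei$ with rescaling then shows that the data $h_n:=\lambda_n^{-1}(e^{-it_n\Delta/\lambda_n^2}V_0)((\cdot-x_n)/\lambda_n)$ matches the shape of the bubbles in~(i). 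The standard route to this extraction is through a refined Strichartz inequality of the form $\|\ei f\|_{L^5_{t,x}}\le C\|f\|_\hdt^{1-\theta}N(f)^\theta$, where $N$ is a Besov--type norm that detects concentration on dyadic frequency cubes; a pigeonhole over cubes selects the scale $\lambda_n^{-1}$ and the translation parameters, and $V_0$ is obtained as the weak limit of the normalised maximisers.

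For the iteration, set $w_n^0:=v_{0,n}$ and at each stage $J$ apply the extraction to $w_n^{J-1}$, subtracting the corresponding data bubble to define $w_n^J$. Unitarity of $e^{it\Delta}$ on $\hdt$ combined with weak convergence yields the one--step identity $\|w_n^{J-1}\|_\hdt^2=\|V_{0,J}\|_\hdt^2+\|w_n^J\|_\hdt^2+o_n(1)$, which telescopes into~(iii). Since each extracted profile contributes at least $\psi(\eta_J)^2$ to the fixed $\hdt$--budget $A^2$, one is forced to have $\varlimsup_n\|\ei w_n^J\|_{S(\R)}\to 0$ as $J\to\infty$, which is (ii).

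The hardest step, which I expect to be the main obstacle, is asymptotic orthogonality of the triples. I would proceed by induction on $J$ and by contradiction: if some pair $(j,j')$ with $j<j'$ fails orthogonality along a subsequence, then passing to the limit in a common rescaled frame reveals that the weak limit defining $V_{0,j'}$ inside $w_n^{j'-1}$ would pick up a nontrivial contribution from the earlier bubble $V_{0,j}$, contradicting the fact that $V_{0,j}$ has already been subtracted from $w_n^{j'-1}$. The delicate subcase is when scales and spatial centers are comparable but $|t_{j,n}-t_{j',n}|/\lambda_{j,n}^2\to\infty$; there one invokes the dispersive fact that $e^{is\Delta}V_{0,j}\rightharpoonup 0$ in $\hdt$ as $|s|\to\infty$ to conclude. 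Once orthogonality is in hand, a change of variables together with Strichartz shows that all cross terms in the expansion of $\|v_{0,n}\|_\hdt^2$ vanish in the limit, completing~(iii).
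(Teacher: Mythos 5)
Your outline is exactly the approach the paper relies on: the paper gives no proof of Lemma \ref{lem35}, stating only that it is ``completely analogous'' to Theorem 1.6 of Keraani \cite{K}, and Keraani's argument is precisely the Bahouri--G\'erard style scheme you describe --- single-profile extraction via a refined Strichartz/Besov estimate (here adapted to $\hdt$ and the $L^5_{t,x}$ norm), iteration with the Pythagorean identity forcing smallness of the remainder in (ii), and orthogonality of the parameters via weak limits in rescaled frames, with the time-divergent case handled by $e^{is\Delta}V_{0,j}\rightharpoonup 0$ as $|s|\to\infty$. So your proposal is correct in structure and takes essentially the same route as the paper's (cited) proof.
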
 

The proof of Lemma \ref{lem35} is completely analogous to the one of Theorem 1.6 in Keraani \cite{K} and will be omitted.

Another ingredient in the proof of Propositions \ref{pp33} and \ref{pp34} is the following:
\begin{lem}\label{lem36}
Assume that $||h_n||_\hdt\leq A$ and that $||\ei h_n||_{S(0,+\infty)}\xrightarrow[n\to\infty]{}0$. Then 
$\xymatrix{ D^{1/2}h_n\ar@^{->}[r]_{\;\;\;\;\;\;n\to\infty}&0}$
 weakly in $L^2(\R^3)$.
\end{lem}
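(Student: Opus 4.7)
The plan is to pass to a weakly convergent subsequence of $D^{1/2}h_n$ in $L^2$ and show its weak limit is zero; a standard subsequence argument then yields weak convergence of the whole sequence. So I would extract a subsequence with $D^{1/2}h_n \rightharpoonup D^{1/2}h$ in $L^2$, equivalently $h_n \rightharpoonup h$ in $\hdt$, and try to prove $h=0$. The bridge between the strong-convergence hypothesis and the weak-convergence conclusion is the Sobolev embedding $\hdt \hookrightarrow L^3(\R^3)$; its dual $L^{3/2}\hookrightarrow \dot{\mathrm{H}}^{-1/2}$ shows that weak convergence in $\hdt$ automatically implies weak convergence in $L^3$. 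Since $\ei$ is a unitary isometry on $\hdt$, applying it transfers this weak convergence to each time slice: for every fixed $t$ one has $\ei h_n \rightharpoonup \ei h$ weakly in $L^3_x$, together with the uniform bound $\|\ei h_n\|_{L^3_x}\leq CA$.

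Next I would test against an arbitrary $\phi\in C_0^\infty\bigl((0,\infty)\times\R^3\bigr)$. On one hand, by H\"older and the hypothesis,
\begin{equation*}
\left|\int_0^\infty\!\int_{\R^3} \ei h_n\,\overline{\phi}\,dx\,dt\right|\leq \|\ei h_n\|_{L^5_tL^5_x(0,\infty)}\,\|\phi\|_{L^{5/4}_tL^{5/4}_x}\longrightarrow 0.
\end{equation*}
On the other hand, the pointwise weak $L^3_x$ convergence gives $\int \ei h_n\,\overline\phi\,dx \to \int \ei h\,\overline\phi\,dx$ for each $t$, and the integrand is dominated by $CA\,\|\phi(\cdot,t)\|_{L^{3/2}_x}$, which lies in $L^1_t$ because $\phi$ has compact support in time. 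Dominated convergence then produces $\iint \ei h\cdot\overline{\phi}=0$ for every such $\phi$, so $\ei h\equiv 0$ as a distribution on $(0,\infty)\times\R^3$.

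To finish, the map $t\mapsto \ei h$ is continuous from $\R$ into $\hdt$, and hence into $L^3$, so its distributional vanishing on the half-line forces $\ei h = 0$ in $L^3$ for every $t\in(0,\infty)$; letting $t\to 0^+$ and using continuity at $t=0$ yields $h=0$, as desired.

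The delicate point in the argument is the combination of two different modes of convergence inside a single integral --- norm convergence to zero in $L^5_tL^5_x$ from the hypothesis, together with pointwise weak $L^3$ convergence inherited from $\hdt$-boundedness --- and this is precisely why the uniform $L^3_x$ bound provided by Sobolev embedding is indispensable as the dominating function for the interchange of limits.
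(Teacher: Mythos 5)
Your proof is correct, and it reaches the conclusion by a somewhat different mechanism than the paper at the one step that matters. Both arguments share the same skeleton: extract a subsequence with $D^{1/2}h_n \rightharpoonup D^{1/2}h$ in $L^2$, use H\"older/duality against the vanishing $S(0,+\infty)$ norm to show that $\ei h$ pairs to zero with a suitable class of test functions on $(0,+\infty)\times\R^3$, and conclude $h\equiv 0$. The difference is in how the interchange of limits $\int\!\!\int \ei h_n\,\overline{g} \to \int\!\!\int \ei h\,\overline{g}$ is justified. The paper restricts to a dense class of tensor-product test functions $g=\sum_\alpha g_\alpha(t)f_\alpha(x)$ with $\hat f_\alpha$ vanishing near the origin, and conjugates the propagator and $D^{-1/2}$ onto the test function, so that the pairing becomes the weak $L^2$ convergence of $D^{1/2}h_{n_j}$ against the fixed $L^2$ function $\int e^{-it\Delta}(D^{-1/2}f_\alpha)\,g_\alpha(t)\,dt$; this yields $\lVert \ei h\rVert_{S(0,+\infty)}=0$ directly by density in the dual unit ball. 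You instead exploit the Sobolev embedding $\hdt\hookrightarrow L^3$ (and its dual $L^{3/2}\hookrightarrow \dot{\mathrm{H}}^{-1/2}$) together with the unitarity of $\ei$ on $\hdt$ to get slice-wise weak $L^3_x$ convergence with the uniform bound $\lVert \ei h_n\rVert_{L^3_x}\le CA$, and then dominated convergence in $t$ against arbitrary $\phi\in C_0^\infty((0,\infty)\times\R^3)$; the endgame uses continuity of $t\mapsto \ei h$ in $\hdt$ (or simply unitarity at any fixed $t>0$) to recover $h=0$. Your route avoids the paper's density/Fourier-support technicalities for the test class at the modest price of invoking the embedding and a dominated-convergence argument; both are sound, and the quantitative content is the same.
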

\begin{proof}
Fix $g\in S(0,+\infty)^\ast=L^{5/4}_{(0,+\infty)}L^{5/4}_x$, with $||g||_{S(0,+\infty)^\ast}\leq 1$ and 
$  g(x,t)=\sum_{\alpha=1}^M g_\alpha(t)f_\alpha(x)$, where $g_\alpha\in C_0^\infty(0,+\infty)$, $f_\alpha\in\mathscr{S}(\R^3)$,
$\hat f_\alpha(\xi)=0$ for $|\xi|$ small.Such $g$ are dense in the unit ball of $L^{5/4}_{(0,+\infty)}L^{5/4}_x$. Let now $v$ be such that 
$D^{1/2}h_{n_j}\rightharpoonup v$ weakly in $L^2(\R^3)$, for some subsequence $\{n_j\}$ and let $h=D^{-1/2}v$. Then,
\begin{multline*}
\int\!\!\!\int \ei h\overline g=\\=\sum_{\alpha=1}^M\int\!\!\!\int h\overline{g_\alpha(t)}\;\overline{e^{-it\Delta}f_\alpha}=
\sum_{\alpha=1}^M\int\!\!\!\int v\overline{g_\alpha(t)}\;\overline{e^{-it\Delta}(D^{-1/2}f_\alpha)}=\\
=\lim_{j\to\infty}\sum_{\alpha=1}^M\int D^{1/2}h_{n_j}\overline{\left(\int e^{-it\Delta}(D^{-1/2}f_\alpha)g_\alpha(t) dt\right)}dx=\\
=\lim_{j\to\infty}\int\!\!\!\int \ei h_{n_j}\overline{g}.
\end{multline*}
Hence, for such $g$ we have 
\begin{equation*}
\left|\int\!\!\!\int \ei h\overline g\right|\leq\varlimsup_{j}||\ei h_{n_j}||_{S(0,+\infty)}=0.
\end{equation*}
This shows that $||\ei h||_{S(0,+\infty)}=0$, $h\in\hdt$. From this it is easy to conclude that $h\equiv0$, so that $v\equiv0$.
 $\square$ \end{proof}

\begin{proof}[Proof of Proposition \ref{pp33}]
Let us find $A_n\downarrow A_C$, and $u_{0,n}\in\hdt$, with corresponding solution $u_n$ with 
$$\sup_{0\leq t\leq T_+(u_{0,n})}||u_n(t)||_\hdt\leq A_n$$ and $ ||u_m||_{S(0,T_+(u_{0,n}))}=+\infty$.
(Here we use, when $T_+(u_{0,n})<\infty$, Lemma \ref{lem28}). We will now use the ``profile decomposition'', Lemma \ref{lem35}, for the sequence  $\{u_{0,n}\}$, (under the assumption $A_C<\infty$), so that $A_n\leq 2A_C$ for all $n$. We will pass to a subsequence as in Lemma \ref{lem35} and assume, passing to a further subsequence, that $s_{j,n}=-t_{j,n}/\lambda_{j,n}^2\xrightarrow[n]{}s_j\in[-\infty,+\infty]$ for each $j=1,2, \ldots$, and for each $J=1, \ldots$, we have 
\begin{equation*}
\varlimsup_{n\to\infty}||\ei w_n^J||_S=\lim_{n\to\infty}||\ei w_n^J||.
\end{equation*} 
We will also let $U_j$ be the non-linear profile associated with $(V_{0,j},\{s_{j,n}\})$, (Definition \ref{dfn210}) and we will let 
\begin{equation*}
 \tilde U_{j,n}(x,t)=\frac{1}{\lambda_{j,n}}U_j\left(\frac{x-x_{j,n}}{\lambda_{j,n}},\frac{t}{\lambda_{j,n}^2}+s_{j,n}\right),
\end{equation*}
 which is also a solution of \ref{cp}. The proof will now be accomplished in a number of steps.
\begin{step} There exists $J_0>0$ such that, for $j>J_0$, we have $T_\pm(U_j)=+\infty$ and
\begin{equation*}
\sup_{t\in(-\infty,+\infty)}\!\!\!\!||U_j(t)||_\hdt+||U_j||_{S(-\infty,+\infty)}+||D^{1/2}U_j||_{W(-\infty,+\infty)}\leq C||V_{0,j}||_\hdt
\end{equation*}
\end{step}

To establish this step, note that, from (iii), by choosing $n$ large, for any $J\geq1$ we have
\begin{equation}\label{eq37}
||w_n^J||^2_\hdt+\sum_{j=1}^J||V_{0,j}||^2_\hdt\leq||v_{0,n}||^2_\hdt+A_C^2\leq5A_C^2
\end{equation}
Thus, for $J_0$ large, $j\geq J_0$, we have $||V_{0,j}||_\hdt\leq\tilde\delta$, with $\tilde\delta$ as in Remark \ref{rem25}, so that
$||\ei V_{0,j}||_{S(-\infty,+\infty)}\leq\delta$. From the construction of the non-linear profile $U_j$, it now follows that 
$||U_j||_{S(-\infty,+\infty)}\leq2\delta$ and 
\begin{equation*}
\sup_{t\in(-\infty,+\infty)}\!\!\!\!||U_j(t)||_\hdt+||D^{1/2}U_j||_{W(-\infty,+\infty)}\leq C||V_{0,j}||_\hdt
\end{equation*}
which establishes this step.

\begin{step} It cannot happen that for all $1\leq j\leq J_0$, $n$ large, we have $$||U_j||_{S(s_{j,n}, T_+(U_j)))}<\infty.$$
\end{step}

If not, the proof of Lemma \ref{lem28} (see for instance \cite{KM}, Lemma 2.1) gives both that $T_+(U_j)=+\infty$ and that
\begin{equation*}
\sup_{t\in(s_{j,n},+\infty)}\!\!\!\!||U_j(t)||_\hdt+||D^{1/2}U_j||_{W(s_{j,n},+\infty)}< \infty,
\end{equation*}
for $1\leq j\leq J_0$, so that, combining this with Step 1, we obtain
\begin{multline}\label{eq38}
\sum_{j=1}^\infty\sup_{t\in(s_{j,n},+\infty)}\!\!\!\!||U_j(t)||^2_\hdt+\\+||U_j||^2_{S(s_{j,n},+\infty))}
||D^{1/2}U_j||^2_{W(s_{j,n},+\infty)}\leq C_0.
\end{multline}
For $\epsilon_0>0$ to be chosen choose $J(\epsilon_0)$ so that, for $n$ large 
\begin{equation*}
||\ei w_n^{J(\epsilon_0)}||_{S(-\infty,+\infty)}\leq\epsilon_0. 
\end{equation*}
Let $  H_{n,\epsilon_0}=\sum_{j=1}^{J(\epsilon_0)}\tilde U_{j,n}(x,t)$. We next show that \eqref{eq38} and orthogonality give that
\begin{equation}\label{eq39}
||H_{n,\epsilon_0}||_{S(0,+\infty)}+\sup_{t\in(0,+\infty)}\!\!\!\!||H_{n,\epsilon_0}||_\hdt+||D^{1/2}H_{n,\epsilon_0}||_{W(0,+\infty)}\leq\tilde C_0,
\end{equation}
for $n\geq n(\epsilon_0)$.

The proof of the bound for $||H_{n,\epsilon_0}||_{S(0,+\infty)}$ is similar to the one given in \cite{KM}, pages 663--664. We next show the other two bounds: recall that $H_{n,\epsilon_0}$ verifies
\begin{equation*}
\left\{
\begin{array}{l} 
i\partial_tH_{n,\epsilon_0}+\Delta H_{n,\epsilon_0}=\sum_{j=1}^{J(\epsilon_0)}|\tilde U_{j,n}|^2\tilde U_{j,n}\\  \\
H_{n,\epsilon_0}\Big|_{t=0}=\sum_{j=1}^{J(\epsilon_0)}\tilde U_{j,n}(0)
\end{array}
\right.
\end{equation*}
Hence, we can write
\begin{equation*}
H_{n,\epsilon_0}(t)=\ei\left(\sum_{j=1}^{J(\epsilon_0)}\tilde U_{j,n}(0)\right)+\int_0^t \mathrm{e}^{i(t-t')\Delta}
\sum_{j=1}^{J(\epsilon_0)}|\tilde U_{j,n}|^2\tilde U_{j,n} dt'.
\end{equation*}
Thus,
\begin{multline*}
D^{1/2}H_{n,\epsilon_0}(t)=\\=D^{1/2}\ei\left(\sum_{j=1}^{J(\epsilon_0)}\tilde U_{j,n}(0)\right)+D^{1/2}\int_0^t \mathrm{e}^{i(t-t')\Delta}
\sum_{j=1}^{J(\epsilon_0)}|\tilde U_{j,n}|^2\tilde U_{j,n} dt'\\=A(t)+B(t).
\end{multline*}
\begin{equation*}
||A(t)||_\hdt=\left\lVert\sum_{j=1}^{J(\epsilon_0)}\tilde U_{j,n}(0)\right\rVert_\hdt\leq1+
\left\lVert\sum_{j=1}^{J(\epsilon_0)}\frac{V_j^l\left(\frac{x-x_{j,n}}{\lambda_{j,n}},s_{j,n}\right)}{\lambda_{j,n}}\right\rVert_\hdt
\end{equation*}
for $n$ large depending on $J(\epsilon_0)$, where we have used the definition of the non-linear profile $U_j$. But, the second term on the right equals 
\begin{equation*}
||u_{0,n}-w_n^{J(\epsilon_0)}||_\hdt\leq6^{1/2}A_C,
\end{equation*}
by \eqref{eq37}.

On the other hand, Lemma \ref{lem21} and Lemma \ref{lem23} show that
\begin{equation*}
||B(t)||_\hdt\leq \sum_{j=1}^{J(\epsilon_0)}||U_j||^2_{S(s_{j,n},+\infty)}\cdot||D^{1/2}U_j||_{W(s_{j,n},+\infty)}\leq\tilde C_0
\end{equation*}
in light of \eqref{eq38}. The argument for $||D^{1/2}B||_{W(0,+\infty)}$ is similar, thus establishing \eqref{eq39}.

Next, let 
\begin{equation*}
  R_{n,\epsilon_0}=|H_{n,\epsilon_0}|^2 H_{n,\epsilon_0}-\sum_{j=1}^{J(\epsilon_0)}|\tilde U_{j,n}|^2\tilde U_{j,n}.
\end{equation*}
We claim that, for $n\geq n(\epsilon_0)$, we have 
\begin{equation}\label{eq310}
||D^{1/2}R_{n,\epsilon_0}||_{L^{5/3}_{(0,+\infty)}L^{30/23}_x}\xrightarrow[n\to\infty]{}0.
\end{equation}
A similar proof is given in \cite{KM} and \cite{KM2}, but we give the full details here to deal explicitly with the difficulties arising from the non-local character of $D^{1/2}$. 

To establish \eqref{eq310}, we need to study terms of the form 
\begin{equation*}
D^{1/2}( \tilde U_{j,n}\overline{\tilde U_{j',n}}\tilde U_{j_1,n}),
\end{equation*}
 in the $L^{5/3}_{(0,+\infty)}L^{30/11}_x$ norm, where at least two of $j,j',j_1$ are different. Assuming that $j\neq j'$, using the Leibniz rule for fractional integration (Theorem A.6 in \cite{KPV}) and H\"older's inequality, we are reduced to estimating the sum of 
\begin{equation*}
||\tilde U_{j_1,n}||_{S(0,+\infty)}||D^{1/2}(\tilde U_{j,n}\overline{\tilde U_{j',n}})||_{L^{5/2}_{(0,+\infty)}L^{30/17}_x} 
\end{equation*}
and 
\begin{equation*}
||\tilde U_{j,n}\overline{\tilde U_{j',n}}||_{L^{5/2}_{(0,+\infty)}L^{5/2}_x}||D^{1/2}\tilde U_{j_1,n}||_{W(0,+\infty)}.
\end{equation*}
The arguments in \cite{K} and \eqref{eq38} easily show that the second term goes to 0. \eqref{eq38} also gives that the first factor in the first term is bounded, thus reducing us to showing
\begin{equation}\label{eq311}
\lim_{n\to\infty}||D^{1/2}(\tilde U_{j,n}\overline{\tilde U_{j',n}})||_{L^{5/2}_{(0,+\infty)}L^{30/17}_x}=0.
\end{equation}
We proceed by considering cases.

Assume first that $\left(\frac{\lambda_{j,n}}{\lambda_{j',n}}\right)\to+\infty$. Apply the Leibniz rule in the $x$ variable, to bound the integral by
\begin{multline*}
\left\lVert\;||D^{1/2}\tilde U_{j,n}||_{L^{30/11}_x}||\tilde U_{j',n}||_{L^5_x}
\right\rVert_{L^{5/2}_{(0,+\infty)}}+\\+
\left\lVert\;||D^{1/2}\tilde U_{j',n}||_{L^{30/11}_x}||\tilde U_{j,n}||_{L^5_x}
\right\rVert_{L^{5/2}_{(0,+\infty)}}.
\end{multline*}
Change variables in the $x$ integrals. The terms then become 
\begin{multline*}
\frac{1}{\lambda_{j,n}^{2/5}}\frac{1}{\lambda_{j',n}^{2/5}}\left\lVert\;
\left\lVert(D^{1/2}U_{j})\left(\cdot,\frac{t-t_{j,n}}{\lambda_{j,n}^2}\right)\right\rVert_{L^{30/11}_x}\right.
\left.
\left\lVert U_{j'}\left(\cdot,\frac{t-t_{j',n}}{\lambda_{j',n}^2}\right)\right\rVert_{L^5_x}
\right\rVert_{L^{5/2}_{(0,+\infty)}}+\\+
\frac{1}{\lambda_{j,n}^{2/5}}\frac{1}{\lambda_{j',n}^{2/5}}\left\lVert\;
\left\lVert(D^{1/2}U_{j'})\left(\cdot,\frac{t-t_{j,n}}{\lambda_{j,n}^2}\right)\right\rVert_{L^{30/11}_x}\right.
\left.
\left\lVert U_{j}\left(\cdot,\frac{t-t_{j',n}}{\lambda_{j',n}^2}\right)\right\rVert_{L^5_x}
\right\rVert_{L^{5/2}_{(0,+\infty)}}.
\end{multline*}
To handle, say, the first term, we first make some observations, to be used throughout, about non-linear profiles. Note that if $T_+(U_j)=+\infty$ (as we are assuming), there exists $-\infty\leq a_j<+\infty$ so that
\begin{multline*}
\sup_{t\in(a_j,+\infty)}\!\!\!\!||U_j(t)||_\hdt+||U_j||_{S(a_j,+\infty)}+||D^{1/2}U_j||_{W(a_j,+\infty)}\leq\\
\leq C\left(
\sup_{t\in(s_{j,n},+\infty)}\!\!\!\!||U_j(t)||_\hdt+||U_j||_{S(s_{j,n},+\infty)}+||D^{1/2}U_j||_{W(s_{j,n},+\infty)}
\right),
\end{multline*}
for $n$ large, and $s_{j,n}\in(a_j,+\infty)$ for $n$ large. Let now 
\begin{equation*}
f_j(s)=||D^{1/2}U_j(\cdot,s)||_{L^{30/11}_x},\quad g_{j'}(s)=||U_{j'}(\cdot,s)||_{L^{5}_x},
\end{equation*}
 belonging to 
$L^5_{(a_j,+\infty)}$, $L^5_{(a_{j'},+\infty)}$ respectively. We can approximate $f_j$, $g_{j'}$ by 
$C_0^\infty(a_j,+\infty)$, $C_0^\infty(a_{j'},+\infty)$ functions respectively, making a small error in our term. We do the cange of variables $s=(t-t_{j',n})/{\lambda_{j',n}^2}$, to obtain (for $n$ large)
\begin{equation*}
\left(\frac{\lambda_{j',n}}{\lambda_{j,n}}\right)^{2/5}\left\lVert
f_j\left(s\frac{\lambda_{j',n}^2}{\lambda_{j,n}^2}+\frac{t_{j',n}-t_{j,n}}{\lambda_{j,n}^2}\right)\cdot 
g_{j'}(s)\right\rVert_{L^{5/2}_{(s_{j',n},+\infty)}}.
\end{equation*}
Note that for $s\in(s_{j',n},+\infty)$, 
\begin{equation*}
s\frac{\lambda_{j',n}^2}{\lambda_{j,n}^2}+\frac{t_{j',n}-t_{j,n}}{\lambda_{j,n}^2}\in(s_{j,n},+\infty)\subset(a_j,+\infty),
\end{equation*}
 so $f_j$ is bounded and since 
$(s_{j',n},+\infty)\subset(a_{j'},+\infty)$, the term tends to 0. The other term is analogous and the case 
$\left(\frac{\lambda_{j',n}}{\lambda_{j,n}}\right)\to+\infty$ is symmetric to this one. The next case (see (2.92) in \cite{K}) is when $\lambda_{j,n}=\lambda_{j',n}$ and ${|t_{j',n}-t_{j,n}|}/{\lambda_{j,n}^2}\to\infty$. By symmetry we can assume that 
$(t_{j',n}-t_{j,n})/{\lambda_{j,n}^2}\to+\infty$. Proceeding in exactly the same way, we see that the support asumption on $f_j$ makes the integral 0 for $n$ large. The final case is 
$\lambda_{j,n}=\lambda_{j',n}$, $|t_{j',n}-t_{j,n}|/{\lambda_{j,n}^2}\leq C$, $|x_{j',n}-x_{j,n}|/{\lambda_{j,n}}\to+\infty$. In this case we need to re-examine the proof of the Leibniz rule in \cite{KPV}, using Proposition A.2, Lemma A.3, and the proof of Theorem A.8 in \cite{KPV}. We then see that, for $1<p<\infty$, we have
\begin{equation}\label{eq312}
||D^{1/2}(f\cdot g)-fD^{1/2}(g)-gD^{1/2}(f)||_{L^p_x}\leq C||A(f)\cdot B(D^{1/2}g)||_{L^p_x},
\end{equation}
where $A$, $B$ are sublinear operators which commute with translations and which are $L^q_x$ bounded for any 
$q>1$. (The $A$ and $B$ are basically square functions plus maximal functions) Consider in our estimate for \eqref{eq311},
\begin{multline*}
\frac{1}{\lambda_{j,n}^2}\left\lVert D^{1/2}\left(U_j\left(\frac{x-x_{j,n}}{\lambda_{j,n}},
\frac{t-t_{j,n}}{\lambda_{j,n}^2}\right)\right.\right. \left.\left. 
U_{j'}\left(\frac{x-x_{j',n}}{\lambda_{j',n}},
\frac{t-t_{j',n}}{\lambda_{j',n}^2}\right)\right)\right\rVert_{L^{30/17}_x}=\\=
\frac{1}{\lambda_{j,n}^{4/5}}\left\lVert D^{1/2}\left(U_j\left(x-\frac{x_{j,n}}{\lambda_{j,n}},
\frac{t-t_{j,n}}{\lambda_{j,n}^2}\right)\right.\right. \left.\left. 
 U_{j'}\left(x-\frac{x_{j',n}}{\lambda_{j',n}},
\frac{t-t_{j',n}}{\lambda_{j',n}^2}\right)\right)\right\rVert_{L^{30/17}_x},
\end{multline*}
by change of variables. We then apply \eqref{eq312} and the triangle inequality. Consider, for instance, the term
\begin{multline*}
\frac{1}{\lambda_{j,n}^{4/5}}\left\lVert U_j\left(\cdot-\frac{x_{j,n}}{\lambda_{j,n}},
\frac{t-t_{j,n}}{\lambda_{j,n}^2}\right)\right. \left. 
D^{1/2}U_{j'}\left(\cdot-\frac{x_{j',n}}{\lambda_{j',n}},
\frac{t-t_{j',n}}{\lambda_{j',n}^2}\right)\right\rVert_{L^{30/17}_x}=
\\=
\frac{1}{\lambda_{j,n}^{4/5}}\left\lVert U_j\left(\cdot-\frac{x_{j,n}-x_{j',n}}{\lambda_{j,n}},
\frac{t-t_{j,n}}{\lambda_{j,n}^2}\right)\right. \left.
 D^{1/2}U_{j'}\left(\cdot,
\frac{t-t_{j',n}}{\lambda_{j',n}^2}\right)\right\rVert_{L^{30/17}_x}.
\end{multline*}
We need to take the $L^{5/2}_{(0,+\infty)}$ norm of this expression. But, by approximation in 
$L^5_{(a_j,+\infty)}L^5_x$ and $L^5_{(a_{j'},+\infty)}L^{30/11}_x$ by $C_0^\infty$ functions, we see that the 
$x$ integral will be 0 for large $n$. All the other terms are handled similarly, using that $A$, $B$ commute with translations. This finishes the proof of \eqref{eq311} and hence that of \eqref{eq310}.

Once \eqref{eq39} and \eqref{eq310} hold, we apply Theorem \ref{thm212}, with $\tilde u=H_{n,\epsilon_0}$,
$e=R_{n,\epsilon_0}$. Consider 
\begin{multline*}
v_{0,n}=\tilde u(0)-u_{0,n}=\\=\sum_{j=1}^{J(\epsilon_0)}\tilde U_{j,n}(0)-u_{0,n}=
\sum_{j=1}^{J(\epsilon_0)}\left[\tilde U_{j,n}(0)-\frac{1}{\lambda_{j,n}}V_j^l\left(\frac{x-x_{j,n}}{\lambda_{j,n}},
-\frac{t_{j,n}}{\lambda_{j,n}^2}\right)-\right]\\-w_n^{J(\epsilon_0)}.
\end{multline*}
By the properties of the non-linear profile, for $n$ large we have $||v_{0,n}||^2_\hdt\leq\epsilon_1+5A_C^2$, while 
\begin{equation*}
||\ei v_{0,n}||_{S(0,+\infty)}\leq C\epsilon_1+||\ei w_n^{J(\epsilon_0)}||_{S(0,+\infty)}\leq C\epsilon_1+
\epsilon_0,
\end{equation*}
for $n$ large. If $\epsilon_1$ and $\epsilon_0$ are chosen small, Theorem \ref{thm212} yields 
$||u_n||_{S(0,+\infty)}$ $<\infty$, a contradiction which establishes Step 2.

Because of Step 2, rearranging in $j$, we can find $1\leq J_1\leq J_0$ so that, for $1\leq j\leq J_1$ we have (for $n$ large) $||U_j||_{S(s_{j,n},T_+(U_j))}=+\infty$, and for $j>J_1$ we have $T_+(U_j)=+\infty$ and 
$||U_j||_{S(s_{j,n},\infty)}<+\infty$. As a consequence of Step 1 and Step 2, we now have
\begin{equation}\label{eq313}
\sum_{j\geq J_1}\!\!\sup_{t\in(s_{j,n},+\infty)}\!\!\!\!\!\!||U_j(t)||^2_\hdt\!+\!||D^{1/2}U_j||^2_{W(s_{j,n},\infty)}\!+\!
||U_j||^2_{S(s_{j,n},\infty)}\leq C_0,
\end{equation}
for $n$ large enough. Now, for $k\in\mathbb{N}$, $1\leq j\leq J_1$, define
\begin{equation*}
T_{j,k}^+=\left\{
\begin{array}{ll}
T_+(U_j)-\frac{1}{k}&\quad if\;T_+(U_j)<\infty\\ \\
k&\quad if \;T_+(U_j)=\infty
\end{array}\right.,
\end{equation*}
and $t_{j,k,}^n$ by $s_{j,k}+t_{j,k,}^n/\lambda_{j,n}^2=T^+_{j,k}$ and $t^n_k=\min_{1\leq j\leq J_1}t_{j,k,}^n$. With these definitions, $\tilde U_{j,n}$ is defined for $0\leq t\leq t^n_k$, for $j=1,\ldots$ and we have, for $n$ large,
\begin{equation}\label{eq314}
\sum_{j=1}^\infty\sup_{t\in(0,t^n_k)}\!\!\!||\tilde U_{j,n}(t)||^2_\hdt+||D^{1/2}\tilde U_{j,n}||^2_{W(0,t^n_k)}+
||\tilde U_{j,n}||^2_{S(0,t^n_k)}\leq C_k.
\end{equation}
Recall that, for $\epsilon>0$ given, we have:

\begin{eqnarray}\label{eq315}\begin{split}&\text{
There exists $J(\epsilon)$ such that, for $J\geq J(\epsilon)$, there exists}\\ 
&\text{$n(J,\epsilon)$ so that, for $n\geq n(J,\epsilon)$ we have}\\& 
||\ei w_n^J||_{S(-\infty,+\infty)}\leq\epsilon.
\end{split}\\ \nonumber\\\nonumber\\
\label{eq316}\begin{split}
&\text{For each fixed $J\geq1$, there exists $n(J,\epsilon)$ so that, for}\\&
\text{$n\geq n(J,\epsilon)$, we have }\\&
||u_{0,n}||^2_\hdt=\sum_{j=1}^J||V_{0.j}||^2_\hdt+||w_n^J||^2_\hdt+\epsilon(J,n),\\
&\text{with $|\epsilon(J,n)|\leq\epsilon$.}
\end{split}\\ \nonumber\\\nonumber\\
\label{eq317}\begin{split}
&\text{For each fixed $J\geq1$, there exists $n(J,\epsilon)$ so that, for }\\&
\text{$n\geq n(J,\epsilon)$, we have }\\& 
\sum_{j=1}^J\left\lVert\tilde U_{j,n}(x,0)-V_j^l\left(\frac{x-x_{j,n}}{\lambda_{j,n}},s_{j,n}\right)/
\lambda_{j,n}\right\rVert_\hdt\leq\epsilon
\end{split}
\end{eqnarray}
(This is a simple consequence of the definition of the non-linear profile.)

The next step will prove a crucial orthogonality.

\begin{step} For each fixed $J\geq1$, there exists $n(J,\epsilon)$  so that (after passing to a subsequence in $n$), for any $1\leq J_2\leq J$, $n\geq n(J,\epsilon)$, we have:
\begin{equation}\label{eq318}
\left|\left\lVert\sum_{j=J_2}^J\frac{V_j^l\left(\frac{x-x_{j,n}}{\lambda_{j,n}},s_{j,n}\right)}{\lambda_{j,n}}
\right\rVert^2_\hdt-
\sum_{j=J_2}^J\left\lVert\frac{V_j^l\left(\frac{x-x_{j,n}}{\lambda_{j,n}},s_{j,n}\right)}{\lambda_{j,n}}
\right\rVert^2_\hdt\right|\leq\epsilon.
\end{equation}
\end{step}

In order to establish \eqref{eq318}, we need to show that, after passing to a subsequence in $n$, for 
$J_2\leq j,j'\leq J$, $j\neq j'$, $J$ fixed, we have
\begin{equation}\label{eq319}
\lim_{n\to\infty}\frac{\left<(D^{1/2}V_j^l)\left(\frac{x-x_{j,n}}{\lambda_{j,n}},s_{j,n}\right),
(D^{1/2}V_{j'}^l)\left(\frac{x-x_{j',n}}{\lambda_{j',n}},s_{j',n}\right)
\right>}{\lambda_{j,n}^{3/2}\lambda_{j',n}^{3/2}}=0
\end{equation}
We will make repeated use of the following formula:
\begin{equation}\label{eq320}
\begin{split}
\left(\mathrm{e}^{it_0\Delta} v_0\right)\left(\frac{x-x_0}{\lambda_0}\right)&=
\left(\mathrm{e}^{i\lambda_0^2t_0\Delta}v_{0,\lambda_0,x_0}\right)(x),\\
\text{where } v_{0,\lambda_0,x_0}(x)&=v_0\left(\frac{x-x_0}{\lambda_0}\right).
\end{split}
\end{equation}
From \eqref{eq320}, it follows that
\begin{equation*}
V_j^l\left(\frac{x-x_{j,n}}{\lambda_{j,n}},s_{j,n}\right)=
\left(\mathrm{e}^{is_{j,n}\Delta} V_{0,j}\right)\left(\frac{x-x_{j,n}}{\lambda_{j,n}}\right)=
\mathrm{e}^{-it_{j,n}\Delta}V_{0,j,\lambda_{j,n},x_{j,n}}(x)
\end{equation*}
and similarly for $j'$. Thus, the right hand side in \eqref{eq319} becomes
\begin{equation}\label{eq321}
\frac{1}{\lambda_{j,n}^{3/2}\lambda_{j',n}^{3/2}}\left<
\mathrm{e}^{i(t_{j,n}-t_{j',n})\Delta}D^{1/2}V_{0,j',\lambda_{j',n},x_{j',n}}(x),
D^{1/2}V_{0,j}\left(\frac{x-x_{j,n}}{\lambda_{j,n}}\right)
\right>
\end{equation}
which we will show goes to 0 (after passing to a subsequence in $n$) because of orthogonality, $j\neq j'$. We consider various cases.

\subparagraph{Case 1:} $(\lambda_{j,n}/\lambda_{j',n})\to0$. Then we make the change of variables 
$y=(x-x_{j,n})/\lambda_{j,n}$, and \eqref{eq321} becomes
\begin{equation*}
\left(\frac{\lambda_{j,n}}{\lambda_{j',n}}\right)^{3/2}\left<
\mathrm{e}^{i(t_{j,n}-t_{j',n})/\lambda_{n,j}^2\Delta}D^{1/2}V_{0,j',\frac{\lambda_{j',n}}{\lambda_{j,n}},
\frac{x_{j,n}-x_{j',n}}{\lambda_{j,n}}}(x),
D^{1/2}V_{0,j}(x)
\right>.
\end{equation*}
We now consider
\subparagraph{Case 1a):} $|t_{j,n}-t_{j',n}|/\lambda_{j,n}^2\leq C$. In this case, after passing to a subsequence, $(t_{j,n}-t_{j',n})/\lambda_{j,n}^2\to s_0$. Then,
\begin{equation*}
\mathrm{e}^{-i(t_{j,n}-t_{j',n})/\lambda_{j,n}^2\Delta}D^{1/2}V_{0,j}\to\mathrm{e}^{-is_0\Delta}D^{1/2}V_{0,j} 
\end{equation*}
in $L^2$ and we are reduced to considering
\begin{equation*}
\left(\frac{\lambda_{j,n}}{\lambda_{j',n}}\right)^{3/2}\left<
D^{1/2}V_{0,j',\frac{\lambda_{j',n}}{\lambda_{j,n}},
\frac{x_{j,n}-x_{j',n}}{\lambda_{j,n}}}(x),
\mathrm{e}^{-is_0\Delta}D^{1/2}V_{0,j}(x)
\right>.
\end{equation*}
We now approximate $D^{1/2}V_{0,j'}$ and $\mathrm{e}^{-is_0\Delta}D^{1/2}V_{0,j}(x)$ by $C_0^\infty$ functions in the $L^2$ norm and we readily see that this goes to 0.

\subparagraph{Case 1b):} $|t_{j,n}-t_{j',n}|/\lambda_{j,n}^2$ is not bounded. Then, after passing to a subsequence, $(t_{j,n}-t_{j',n})/\lambda_{j,n}^2\to+\infty$ (say). Let 
$\overline s_n=(t_{j,n}-t_{j',n})/\lambda_{j,n}^2$, and let 
\begin{equation*}
h_n(x)=\left(\frac{\lambda_{j,n}}{\lambda_{j',n}}\right)\mathrm{e}^{i\overline s_n\Delta}V_{0,j',
\frac{\lambda_{j',n}}{\lambda_{j,n}},\frac{x_{j,n}-x_{j',n}}{\lambda_{j,n}}}(x).
\end{equation*}
By \eqref{eq37}, for $n$ large, $||h_n||_\hdt\leq5A_C$. Moreover, using \eqref{eq320},
\begin{multline*}
\ei h_n(x)=\left(\frac{\lambda_{j,n}}{\lambda_{j',n}}\right)\\\times
\mathrm{e}^{i\left[
({\lambda_{j',n}}/{\lambda_{j,n}})^2(t+\overline s_n)\right]\Delta}\;V_{0,j'}
\left(\left(\frac{\lambda_{j,n}}{\lambda_{j',n}}\right)\left(x-\left(\frac{x_{j,n}-x_{j',n}}{\lambda_{j,n}}
\right)\right)\right).
\end{multline*}
Note that $\overline s_n({\lambda_{j',n}}/{\lambda_{j,n}})^2\to+\infty$. Hence a change of variables shows that
\begin{equation*}
\left\lVert\ei h_n\right\rVert_{S(0,+\infty)}\to0,
\end{equation*}
so that Lemma \ref{lem36} gives the desired result. 

If $(t_{j,n}-t_{j',n})/\lambda_{j,n}^2\to-\infty$, we use $S(-\infty,0)$ and the corresponding version of Lemma \ref{lem36}. The case $(\lambda_{j',n}/\lambda_{j,n})\to0$ is symmetric. Thus we can now assume 
$\lambda_{j,n}=\lambda_{j',n}$ (see (2.92) in \cite{K}).

\subparagraph{Case 2:} $\lambda_{j,n}=\lambda_{j',n}$, $|t_{j,n}-t_{j',n}|/\lambda_{j,n}^2\to\infty$. This case is handled using the proof of Case 1b).

\subparagraph{Case 3:} $\lambda_{j,n}=\lambda_{j',n}$, $|t_{j,n}-t_{j',n}|/\lambda_{j,n}^2\leq C$, and 
$|(x_{j,n}-x_{j',n})/\lambda_{j,n}|$ $\to\infty$. In this case let $s_0$ be as in Case 1a). As in that case, we are reduced to studying 
\begin{equation*}
\left<
D^{1/2}V_{0,j'}\left(x-\frac{x_{j,n}-x_{j',n}}{\lambda_{j,n}}\right),D^{1/2}\mathrm{e}^{is_0\Delta}V_{0,j}(x)
\right>,
\end{equation*}
which goes to zero by aproximating $D^{1/2}V_{0,j'}$ and $D^{1/2}\mathrm{e}^{is_0\Delta}V_{0,j}$ in $L^2$ by $C_0^\infty$ functions. Thus Step 3 is established. 

The following step will be needed to apply Theorem \ref{thm212}.
\begin{step} 
For $J$, $n$ given, consider
\begin{equation*}
e^{(1)}_{J,n}(x,t)=f(\tilde U_{J,n})-\sum_{j=1}^Jf(\tilde U_{j,n}),
\end{equation*}
where $\tilde U_{J,n}(x,t)=\sum_{j=1}^J \tilde U_{j,n}(x,t)$ and where we recall that $f(z)=|z|^2z$, and
\begin{equation*}
e^{(2)}_{J,n}(x,t)=f(\tilde U_{J,n}+w_n^{l,J})-f(\tilde U_{J,n}),
\end{equation*}
where $w_n^{l,J}(x,t)=\ei w_n^J(x)$. Then,
\begin{itemize}
\item[i)] For each fixed $J\geq1$, $k\in\mathbb{N}$, there exists $n(J,k,\epsilon)$ so that, for
$n\geq n(J,k,\epsilon)$, we have
\begin{equation}\label{eq322}
||D^{1/2}e^{(1)}_{J,n}||_{L^{5/3}(0,t_k^n)L_x^{30/23}}\leq\epsilon.
\end{equation}
\item[ii)] For each fixed $k\in\mathbb{N}$, there exists $J=J(k,\epsilon)$ so that, for $J\geq J(k,\epsilon)$, there exists $n(J,k,\epsilon)$ so that, for
$n\geq n(J,k,\epsilon)$, we have
\begin{equation}\label{eq323}
||D^{1/2}e^{(2)}_{J,n}||_{L^{5/3}(0,t_k^n)L_x^{30/23}}\leq\epsilon.
\end{equation}
\end{itemize}
\end{step}
\begin{proof}
First, note that for $1\leq j\leq J_1$, we must have $s_j<+\infty$, otherwise we would have 
$||U_j||_{S(s_{j,n},+\infty)}<\infty$, by the construction of the nonlinear profile. Thus, for $k$ fixed, 
$1\leq j\leq J_1$, ther exists $-\infty\leq a_j<+\infty$, such that 
\begin{equation*}
\sup_{t\in(a_j,T^+_{j,k})}\!\!\!||D^{1/2}U_j(t)||_{L_x^2}+||U_j||_{S(a_j,T^+_{j,k})}+
||D^{1/2}U_j||_{W(a_j,T^+_{j,k})}<\infty
\end{equation*}
and $(s_{j,n},T^+_{j,k})\subset(a_j,T^+_{j,k})$, for $n$ large. Moreover, for $j>J_1$, $T^+(U_j)=+\infty$ and there exists $a_j$ with $-\infty\leq a_j<+\infty$ so that
\begin{multline*}
\sup_{t\in(a_j,+\infty)}\!\!\!\!||D^{1/2}U_j(t)||_{L_x^2}+||U_j||_{S(a_j,+\infty)}+
||D^{1/2}U_j||_{W(a_j,+\infty)}\leq\\\leq C\left[
\sup_{t\in(s_{j,n},+\infty)}\!\!\!\!||D^{1/2}U_j(t)||_{L_x^2}+||U_j||_{S(s_{j,n},+\infty)}+\right.\\\left.+
||D^{1/2}U_j||_{W(s_{j,n},+\infty)}
 \right].
\end{multline*}
Once these remarks are made, the proof of \eqref{eq322} is the same as the one of \eqref{eq310}, using 
\eqref{eq314}. The argument for ii) follows closely that of Keraani in the proof of Proposition  3.4 \cite{K}: 

First note that, in light of \eqref{eq314} and the remarks above, given $\epsilon_1>0$, there exists 
$J(\epsilon_1)\geq 1$ so that 
\begin{multline}\label{eq324}
\sum_{j\geq J(\epsilon_1)}\sup_{t\in(a_j,+\infty)}\!\!\!\!||U_j(t)||^2_\hdt+\\+||U_j||^2_{S(a_j,+\infty)}+
||D^{1/2}U_j||^2_{W(a_j,+\infty)}\leq\epsilon_1
\end{multline}
Also, from \eqref{eq37}, for any $J\geq1$, there exists $n(J)$ so that, for $n\geq n(J)$, 
\begin{equation}\label{eq325}
||w_n^J||^2_\hdt\leq 5A_C^2
\end{equation}
Finally, note that there exists $\tilde C_k$ so that, given $J\geq1$, there exists $n(J,k)$ so that, for 
$n\geq n(J,k)$ we have
\begin{equation}\label{eq326}
||\tilde U_{J,n}||_{S(0,t_k^n)}+
||D^{1/2}\tilde U_{J,n}||_{W(0,t_k^n)}+\sup_{t\in(0,t_k^n)}||\tilde U_{J,n}(t)||_\hdt\leq\tilde C_k
\end{equation}
The proof of \eqref{eq326} is simillar to the one of \eqref{eq38}, using \eqref{eq314}.

Next we write $f(\tilde U_{J,n}+w_n^{l,J})-f(\tilde U_{J,n})$ by expanding the cubic, term by term. In the analysis that follows, the worse kind of term is 
\begin{equation*}
D^{1/2}(|\tilde U_{J,n}|^2w_n^{l,J}), 
\end{equation*}
which we handle now. We estimate this by using Theorem A.8 in \cite{KPV} in the form 
\begin{equation*}
||D^{1/2}(f\cdot g)-fD^{1/2}g-gD^{1/2}f||_{L_t^pL_x^r}\leq C||D^{1/4}f||_{L_t^{p_1}L_x^{r_1}}
||D^{1/4}g||_{L_t^{p_2}L_x^{r_2}},
\end{equation*}
\begin{equation*}
\text{where }\quad \frac{1}{p}=\frac{1}{p_1}+\frac{1}{p_2},\quad\frac{1}{r}=\frac{1}{r_1}+\frac{1}{r_2}
\end{equation*}
Using \eqref{eq315},\eqref{eq325},\eqref{eq326} and interpolation, we are reduced to handling the worse term,
\begin{equation*} 
|||\tilde U_{J,n}|^2D^{1/2}w_n^{l,J}||_{L^{5/3}_{(0,t_k^n)}L_x^{30/23}}.
\end{equation*}
 Using \eqref{eq326} again and 
H\"older, we are reduced to showing that 
\begin{equation*}
||\tilde U_{J,n}D^{1/2}w_n^{l,J}||_{L^{5/2}_{(0,t_k^n)}L_x^{30/17}}
\end{equation*}
 is small for $J$ large, $n$ large. Using the argument in \eqref{eq39}, together with \eqref{eq324}, \eqref{eq37}, \eqref{eq318} and the definition of the non-linear profile, we see that the norms of $\sum_{J(\epsilon_1)}^J\tilde U_{j,n}$ are smaller that 
$10\epsilon_1$, uniformly in $J$, for $n$ large depending on $J$. W are thus reduced to showing that for each fixed $j$, we have that 
\begin{equation*}
||\tilde U_{j,n}D^{1/2}w_n^{l,J}||_{L^{5/2}_{(0,t_k^n)}L_x^{30/17}}
\end{equation*}
 is small, for large $J$ and $n$. Let us consider first $1\leq j\leq J_1$, Then $t_k^n\leq t_{j,k}^n$. Change variables 
$y=(x-x_{j,n})/\lambda_{j,n}$, $s=s_{j,n}+t/\lambda_{j,n}^2$ and define 
\begin{equation*}
D^{1/2}\tilde w_{n,j}^J(y,s)=\lambda_{j,n}^{3/2}D^{1/2}w_n^{l,J}(\lambda_{j,n}y+x_{j,n},\lambda_{j,n}^2s-
\lambda_{j,n}^2s_{j,n}).
\end{equation*}
The integral we are considering is bounded by 
\begin{equation*}
||U_jD^{1/2}\tilde w_{n,j}^J||_{L^{5/2}_{(a_j,T^+_{j,k})}L^{30/17}_y}.
\end{equation*}
 Note that 
\begin{equation*}
||D^{1/2}\tilde w_{n,j}^J||_{L^{5}_{s}L^{30/11}_y}=||D^{1/2} w_{n,j}^J||_{L^{5}_{t}L^{30/11}_x}
\end{equation*}
and
\begin{equation*}
||\tilde w_{n,j}^J||_{L^{5}_{s}L^{5}_y}=||w_{n,j}^J||_{L^{5}_{t}L^{5}_x}.
\end{equation*}
Since $U_j\in L^{5}_{(a_j,T^+_{j,k})}L^{30/17}_y$, by H\"older's inequality and density, we can assume 
$U_j\in C_0^\infty(B)$, $B$ a bounded subset of $\R^4$. It thus suffices to show that
$||D^{1/2}\tilde w_{n,j}^J||_{L^2(B)}$ can be made small, by first choosing $J$ large and then $n$ large. Note from \eqref{eq320} that $\tilde w_{n,j}^J(y,s)=(\mathrm{e}^{is\Delta}\tilde w_{0,n,j})(y)$, where
$\tilde w_{0,n,j}=\lambda_{j,n}w_n^{l}(\lambda_{j,n}y+x_{j,n},-\lambda_{j,n}^2s_{j,n})$. The desired result follows from:
\begin{lem}\label{lem327}
Let $B$ be a bounded subset of $\R^3\times\R$. Then, for any $\eta>0$, there exists $C_\eta>0$ such that
\begin{equation*}
||D^{1/2}v||_{L^2(B)}\leq C_\eta||v||_{L^5(\R^4)}+\eta||v(0)||_\hdt,
\end{equation*}
where $v$ is a solution to the linear Schr\"odinger equation.
\end{lem}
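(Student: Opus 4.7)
The strategy is a standard Littlewood--Paley frequency split at a threshold $N$ to be chosen at the end in terms of $\eta$ and $B$. Write $v = P_{\leq N} v + P_{>N} v$; since the projectors commute with $\mathrm{e}^{it\Delta}$, one has $P_{\leq N} v(\cdot,t) = \mathrm{e}^{it\Delta} P_{\leq N} v(0)$ and likewise for the high-frequency piece. The plan is to absorb $P_{\leq N} v$ into the $L^5(\R^4)$ norm and $P_{>N} v$ into the small multiple of $\|v(0)\|_\hdt$.

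For the low-frequency piece I would apply Bernstein's inequality on each time slice, $\|D^{1/2} P_{\leq N} g\|_{L^2(\R^3)} \leq C N^{1/2} \|P_{\leq N} g\|_{L^2(\R^3)}$, then apply H\"older's inequality in space and in time with the split $\tfrac{1}{2} = \tfrac{3}{10} + \tfrac{1}{5}$ on the bounded set $B = K \times I$, together with the $L^5$-boundedness of $P_{\leq N}$, to obtain
$$\|D^{1/2} P_{\leq N} v\|_{L^2(B)} \leq C N^{1/2} |B|^{3/10} \|v\|_{L^5(\R^4)}.$$
This is of the desired form $C_\eta \|v\|_{L^5(\R^4)}$ once $N$ is fixed.

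For the high-frequency piece, the crucial observation is that $P_{>N} v(0)$ lies in $L^2(\R^3)$ with small norm, even though $v(0)$ itself need not be square-integrable: $\|P_{>N} v(0)\|_{L^2}^2 \leq N^{-1} \|v(0)\|_\hdt^2$. I would then invoke the Kato--Yajima local smoothing estimate for the Schr\"odinger group in $\R^3$, namely $\|D^{1/2} \mathrm{e}^{it\Delta} f\|_{L^2(B)} \leq C_B \|f\|_{L^2}$ for any bounded $B \subset \R^4$, applied to $f = P_{>N} v(0)$, giving
$$\|D^{1/2} P_{>N} v\|_{L^2(B)} \leq C_B N^{-1/2} \|v(0)\|_\hdt.$$

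Summing the two bounds and choosing $N$ large enough, depending on $\eta$ and $B$, so that $C_B N^{-1/2} \leq \eta$ yields the claim with $C_\eta = C N^{1/2} |B|^{3/10}$. The main obstacle, or at least the non-routine input, is the Kato local smoothing estimate: it is not stated among the preliminaries in \S2, but it is classical (Constantin--Saut, Sj\"olin, Vega) and can alternatively be derived by a short duality argument from the global Strichartz estimate tested against $L^2$ functions supported on $B$. The remaining steps are routine Littlewood--Paley bookkeeping.
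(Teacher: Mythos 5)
Your proof is correct and follows essentially the same route as the paper, which simply defers to Keraani's Lemma 3.7: a frequency truncation at height $N$, with Bernstein/H\"older on the bounded set for the low frequencies and the classical half-derivative local smoothing estimate (Kato--Yajima, Constantin--Saut, Sj\"olin, Vega) for the high frequencies, then $N$ chosen large in terms of $\eta$ and $B$. Two small repairs: in the low-frequency step you should localize first (H\"older against $\chi_K$ with $\frac12=\frac{3}{10}+\frac15$) and then apply Bernstein in $L^5$, since $P_{\leq N}v(t)$ need not belong to $L^2(\R^3)$ when $v(t)$ is only in $L^5_x$; and the aside that the local smoothing bound could be recovered from global Strichartz by duality is not correct---Strichartz yields no gain of half a derivative---so the genuine local smoothing estimate is really needed, which is harmless since it is classical.
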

The proof of Lemma \ref{lem327} is analogous to the one of Lemma 3.7 in \cite{K}.

Finally, the case $j>J_1$ follows similarly, replacing $(0,t_k^n)$ by $(a_j,+\infty)$. This concludes the proof of Step 4.
 $\square$ \end{proof}

Fix now $k\in\mathbb{N}$. Choose $J(m,k)$ so that, for $n\geq n_1(J,m,k)$, $J\geq J(m,k)$ we have (by 
\eqref{eq315} and \eqref{eq323}),
\begin{equation*}
||\ei w_n^J||_{S(-\infty,+\infty)}\leq\frac{1}{m},\quad
||D^{1/2}e_{J,n}^{(2)}||_{L^{5/3}_{(0,t_k^n)}L_x^{30/23}}\leq\frac{1}{2m}.
\end{equation*}
Next, choose for $J=J(m,k)$ fixed, $n(m,k)\geq n_1(J(m),m,k)$, so large that $|\epsilon(J,n(m,k))|\leq 1/m$,
($\epsilon(J,n)$ as in \eqref{eq316}) so that \eqref{eq317} holds with $\epsilon=1/m$, $n=n(m,k)$, so that
\eqref{eq318} holds with $\epsilon=1/(2m)^2$, $n=n(m,k)$, \eqref{eq322} holds with $J=J(m,k)$, $n=n(m,k)$,
$\epsilon=1/(2m)$. We can also ensure, in our choices, that $J(m+1,k)>J(m,k)$, $n(m,k)<n(m+1,k)$.

\begin{step}
For $0\leq t\leq t_k^{n(m,k)}$ and $m$ large. we have $t_k^{n(m,k)}\leq T_+(u_{0,n(m,k)})$ and 
\begin{equation*}
u_{n(m,k)}(t)=\tilde U_{J(m,k),n(m,k)}(t)+w_{n(m,k)}^{l,J(m,k)}(t)+r_{m,k}(t),
\end{equation*}
where
\begin{multline*}
\sup_{0\leq t\leq t_k^{n(m,k)}}\!\!\!\!\!||r_{m,k}(t)||_\hdt+\\+
||r_{m,k}||_{S(0,t_k^{n(m,k)})}+
||D^{1/2}r_{m,k}||_{W(0,t_k^{n(m,k)})}=\epsilon_k(m)
\end{multline*}
with $\epsilon_k(m)\xrightarrow[m\to\infty]{}0$.
\end{step}
\begin{proof}
Define 
\begin{equation*}
\tilde u(x,t)=\tilde U_{J(m,k),n(m,k)}(x,t)+w_{n(m,k)}^{l,J(m,k)}(x,t).
\end{equation*}
Let 
\begin{equation*}
e(x,t)=f(\tilde U_{J(m,k),n(m,k)}+w_{n(m,k)}^{l,J(m,k)})-\sum_{j=1}^{J(m,k)}f(\tilde U_{J(m,k),n(m,k)}).
\end{equation*}
Note that, form Step 4 and our choice of $J(m,k)$, $n(m,k)$, we have 
\begin{equation*}
||D^{1/2}e||_{L^{5/3}_{(0,t_k^{n(m,k)})}L_x^{30/23}}\leq\frac{1}{m}. 
\end{equation*}
Notice also that
$i\partial_t\tilde u+\Delta\tilde u-|\tilde u|^2\tilde u=-e$. Also, from our choices of $J(m,k)$, $n(m,k)$, we have $||u_{n(m,k)}(x,0)-\tilde u(x,0)||_\hdt\leq 1/m$. Then Step 5 follows from \eqref{eq325}, \eqref{eq326} and Theorem \ref{thm212}.
 $\square$ \end{proof}

\begin{step}
There exists $j_0$, $1\leq j_0\leq J_1$, a subsequence $\{k_\alpha\}$, $k_\alpha\xrightarrow[\alpha\to\infty]{}\infty$, and, for each fixed $k_\alpha$, a subsequence $n(m_\beta(k_\alpha),k_\alpha)\uparrow+\infty$ as $\beta\to+\infty$, for each $k_\alpha$, with 
$m_\beta(k_\alpha)\uparrow+\infty$ as $\beta\to+\infty$, so that
\begin{equation*}
t_{j_0,k_\alpha}^{n(m_\beta(k_\alpha),k_\alpha)}=t_{k_\alpha}^{n(m_\beta(k_\alpha),k_\alpha)}
\end{equation*}
for each $\alpha$, $\beta$.
\end{step}
\begin{proof}
Notice that for each fixed $k$, there exists $j(k)$ so that $1\leq j(k)\leq J_1$ and 
\begin{equation*}
t_{k}^{n(m,k)}=t_{j(k),k}^{n(m,k)}
\end{equation*}
for infinitely many $m$'s. Furthermore, there exists $j_0$, $1\leq j_0\leq J_1$ so that $j(k)=j_0$ for infinitely may $k$'s.
 $\square$ \end{proof}

Recall that $||U_{j_0}||_{S(s_{j_0,n(m,k)},T^+(U_{j_0})}=+\infty$, for all large $m$, for fixed $k$, and that 
$s_{j_0}=\lim_{n\to\infty}s_{j_0,n}<+\infty$. We can then find $-\infty<b_{j_0}<T^+(U_{j_0})$ so that 
$s_{j_0,n(m,k)}\leq b_{j_0}$ for all large m and fixed $k$, so that $||U_{j_0}||_{S(b_{j_0},T^+(U_{j_0})}=\infty$. By definition of $A_C$, we have
\begin{equation}\label{eq328}
A^2=\sup_{t\in(b_{j_0},T^+(U_{j_0})}||U_{j_0}(t)||^2_\hdt\geq A_C^2.
\end{equation}
Also, $A_k^2=\sup_{t\in(b_{j_0},T^+_{j_0,k})}||U_{j_0}(t)||^2_\hdt$ verifies $\lim_{k\to\infty}A_k^2=A^2$.

Now, let $T_{j_0,k}\in[b_{j_0},T^+_{j_0,k}]$ be such that $A_k^2=||U_{j_0}(T_{j_0,k})||^2_\hdt$. Define $\tau_{j_0,k}^{n(m,k)}$ by the formula
\begin{equation*}
s_{j_0,n(m,k)}+\frac{\tau_{j_0,k}^{n(m,k)}}{\lambda_{j,n(m,k)}^2}=T_{j_0,k}.
\end{equation*}
Note that for fixed $k$, $m$ large, $\tau_{j_0,k}^{n(m,k)}\geq 0$. Also, since $T_{j_0,k}\leq T_{j,k}^+$, 
$\tau_{j_0,k}^{n(m,k)}\leq t_{j_0,k}^{n(m,k)}$. Since 
$t_{j_0,k_\alpha}^{n(m_\beta(k_\alpha),k_\alpha)}=t_{k_\alpha}^{n(m_\beta(k_\alpha),k_\alpha)}$, for all $\alpha$, $\beta$, we have that\\
$\tilde U_{j,n(m_\beta(k_\alpha),k_\alpha)}(\tau^{n(m_\beta(k_\alpha),k_\alpha)}_{j_0,k_\alpha})$ is defined for all $j$. 

The last step that we need is

\begin{step}
For each $k_\alpha$ fixed and $\beta$ large (after possibly taking a subsequence in $\beta$, which may depend on $k_\alpha$), we have:
\begin{multline}\label{eq329}
\left\lVert u_{n(m_\beta(k_\alpha)}(\tau_{j_0,k_\alpha}^{n(m_\beta(k_\alpha),k_\alpha)})\right\rVert^2_\hdt=\\=
\sum_{j=1}^{J(m_\beta(k_\alpha),k_\alpha)} \left\lVert\tilde U_{j,n(m_\beta(k_\alpha),k_\alpha)} (\tau_{j_0,k_\alpha}^{n(m_\beta(k_\alpha),k_\alpha)})\right\rVert^2_\hdt+\\+
\left\lVert w_{n(m_\beta(k_\alpha),k_\alpha)}^{l,J(m_\beta(k_\alpha),k_\alpha)}(\tau_{j_0,k_\alpha}^{n(m_\beta(k_\alpha),k_\alpha)})\right\rVert^2_\hdt+\epsilon_{k_\alpha}(\beta),
\end{multline}
where $\epsilon_{k_\alpha}(\beta)\xrightarrow[\beta\to\infty]{}0$.
\end{step}
\begin{proof}
In order to alleviate notation, in this proof we will simply write 
$k_\alpha=k$, $J(m_\beta(k_\alpha),k_\alpha)=J$, $n(m_\beta(k_\alpha),k_\alpha)=n$, 
$\tau_{j_1,k_\alpha}^{n(m_\beta(k_\alpha),k_\alpha)}=\tau_{j_1,k}^{n}$ and recall that $k$ is fixed and 
$J$, $n$ are large.

The first claim is that, given $\epsilon>0$, we can find $J_2=J_2(\epsilon)$, and $\beta_0(\epsilon)$ large, so that, for $\beta\geq\beta_0$, we have
\begin{equation}\label{eq330}
\sup_{0\leq t\leq t_k^n}\left\lVert\sum_{j=J_2}^J\tilde U_{j,n}(t)\right\rVert_\hdt\leq\epsilon.
\end{equation}
To establish \eqref{eq330}, note that, from \eqref{eq37} we have, for $\epsilon_1$ to be chosen, 
\begin{equation*}
\sum_{j=J_2(\epsilon_1)}^J||V_{0,j}||^2_\hdt\leq\epsilon_1^2
\end{equation*}
 and from Step 1,
\begin{multline*}
\sum_{j=J_2(\epsilon_1)}^\infty\sup_{t\in(-\infty,+\infty)}\!\!\!\!\!||U_j(t)||^2_\hdt+\\+
||U_j||^2_{S(-\infty,+\infty))}+
||D^{1/2}U_j||^2_{W(-\infty,+\infty)}\leq C\epsilon_1^2.
\end{multline*} 
Next, we use the integral equation for $\tilde U_{j,n}$, to see that, for \\ $0\leq t\leq+\infty$ we have
\begin{equation*}
\sum_{j=J_2}^J\tilde U_{j,n}(t)=\ei\left(\sum_{j=J_2}^J\tilde U_{j,n}(0)\right)+
\sum_{j=J_2}^J\int_0^t\mathrm{e}^{i(t-t')\Delta}f(\tilde U_{j,n})(t')dt'.
\end{equation*}
By Lemma \ref{lem21}, we have:
\begin{multline*}
\left\lVert\sum_{j=J_2}^J\tilde U_{j,n}(t)\right\rVert_\hdt\leq
\left\lVert\sum_{j=J_2}^J\tilde U_{j,n}(0)\right\rVert_\hdt+\\+
C\sum_{j=J_2}^J||\tilde U_{j,n}||^2_{S(-\infty,+\infty))}||D^{1/2}\tilde U_{j,n}||_{W(-\infty,+\infty))}
\leq\\\leq
\left\lVert\sum_{j=J_2}^J V_j^l\left(\frac{x-x_{j,n}}{\lambda_{j,n}},s_{j,n}\right)/\lambda_{j,n}
\right\rVert_\hdt+\frac{1}{m_\beta}+C\epsilon_1,
\end{multline*}
where we have used \eqref{eq317}, Step 1, Cauchy--Schwartz, and our choice of $J_2$.
Next, from \eqref{eq318}, we have that 
\begin{multline*}
\left\lVert\sum_{j=J_2}^J V_j^l\left(\frac{x-x_{j,n}}{\lambda_{j,n}},s_{j,n}\right)/\lambda_{j,n}
\right\rVert_\hdt\leq\\\leq
\left(\sum_{j=J_2}^J||V_{0,j}^2||^2_\hdt\right)^{1/2}+\frac{1}{m_\beta}\leq
\epsilon_1+\frac{1}{m_\beta},
\end{multline*}
and the first claim follows.

Next, note that in light of \eqref{eq330}, Step 5, \eqref{eq325} and \eqref{eq326}, in order to establish 
\eqref{eq329} it suffices to show :
\begin{equation}\label{eq331}
\begin{split}
&\text{for $1\leq j,j'\leq J_2$, $J_2$ fixed, $j\neq j'$,then }\\
&\left<D^{1/2}\tilde U_{j,n}(\tau_{j_0,k}^{n}),D^{1/2}\tilde U_{j',n}(\tau_{j_0,k}^{n})\right>\\
&\text{ tends to 0 with $\beta$ (after passing to a subsequence).}
\end{split}
\end{equation}
\begin{equation}\label{eq332}
\begin{split}
&\text{for $1\leq j\leq J_2$, $J_2$ fixed, }\\
&\left<D^{1/2}\tilde U_{j,n}(\tau_{j_0,k}^{n}),D^{1/2}w_n^{l,J}(\tau_{j_0,k}^{n})\right>\\
&\text{ tends to 0 with $\beta$ (after passing to a subsequence).}
\end{split}
\end{equation}

We now prove \eqref{eq331}. Let us define 
\begin{equation}\label{eq333}
\begin{split}
\tilde t_{j,n}&=-\frac{t_{j,n}}{\lambda_{j,n}^2}+\frac{\tau_{j_0,k}^n}{\lambda_{j,n}^2}\\
\tilde t_{j',n}&=-\frac{t_{j',n}}{\lambda_{j',n}^2}+\frac{\tau_{j_0,k}^n}{\lambda_{j',n}^2}
\end{split}
\end{equation}
Assume first that (say) $|\tilde t_{j',n}|\leq C_{j'}$. Then, after passing to a subsequence in $\beta$, we can assume that $\tilde t_{j',n}\to\tilde t_{j'}$. Note that
\begin{equation*}
-\frac{t_{j',n}}{\lambda_{j',n}^2}+\frac{\tau_{j_0,k}^n}{\lambda_{j',n}^2}\leq
-\frac{t_{j',n}}{\lambda_{j',n}^2}+\frac{t_{k}^n}{\lambda_{j',n}^2}
\end{equation*}
and $\tau_{j_0,k}^n\geq 0$ so that $U_{j'}(t)$ is continuous in $\hdt$ in a neighborhood of $t_{j'}$. Because of this and \eqref{eq324}, we only need to consider
\begin{equation*}
\frac{1}{\lambda_{j,n}^{3/2}\lambda_{j',n}^{3/2}}
\left<D^{1/2}U_j\left(\frac{x-x_{j,n}}{\lambda_{j,n}},\tilde t_{j,n}\right),
D^{1/2}U_{j'}\left(\frac{x-x_{j',n}}{\lambda_{j',n}},\tilde t_{j'}\right)
\right>.
\end{equation*}
We proceed by analyzing cases.

Assume that $(\lambda_{j,n}/\lambda_{j',n})\to+\infty$. If $|\tilde t_{j,n}|\leq C_{j}$, after passing to a subsequence we can assume $\tilde t_{j,n}\to\tilde t_{j}$ and we need only to consider
\begin{equation*}
\frac{1}{\lambda_{j,n}^{3/2}\lambda_{j',n}^{3/2}}
\left<D^{1/2}U_j\left(\frac{x-x_{j,n}}{\lambda_{j,n}},\tilde t_{j}\right),
D^{1/2}U_{j'}\left(\frac{x-x_{j',n}}{\lambda_{j',n}},\tilde t_{j'}\right)
\right>.
\end{equation*}
By approximating $D^{1/2}U_j(\tilde t_{j})$, $D^{1/2}U_{j'}(\tilde t_{j'})$ by $C_0^\infty$ functions in $L_x^2$, this case follows. If $|\tilde t_{j,n}|$ is not bounded, after passing to a subsequence,
$\tilde t_{j,n}\to\pm\infty$. Since for $j\leq J_1$, $\tilde t_{j,n}\leq T^+_{j,k}<\infty$, we must have, if 
$\tilde t_{j,n}\to+\infty$, that $j>J_1$ and $U_j$ scatters at $+\infty$.
If $\tilde t_{j,n}\to-\infty$, then, since $\tilde t_{j,n}\geq s_{j,n}$, $s_j=\lim_n s_{j,n}=-\infty$. Then, by construction of the non-linear profile, $U_j$ scatters at $-\infty$. In either case, there exists $h_j\in\hdt$ so that
\begin{equation*}
||U_j(\tilde t_{j,n})-\mathrm{e}^{i\tilde t_{j,n}\Delta}h_j||_\hdt\xrightarrow[\beta]{}0.
\end{equation*}
We can then replace 
\begin{equation*}
\frac{1}{\lambda_{j,n}^{3/2}}D^{1/2}U_j\left(\frac{x-x_{j,n}}{\lambda_{j,n}},\tilde t_{j,n}\right)
\end{equation*}
 by 
\begin{equation*}
\frac{1}{\lambda_{j,n}^{3/2}}D^{1/2}\mathrm{e}^{i\tilde t_{j,n}\Delta}h_j\left(\frac{x-x_{j,n}}{\lambda_{j,n}}\right),
\end{equation*}
 and consider
\begin{equation*}
\frac{1}{\lambda_{j,n}^{3/2}\lambda_{j',n}^{3/2}}
\left<(D^{1/2}\mathrm{e}^{i\tilde t_{j,n}\Delta}h_j)\left(\frac{x-x_{j,n}}{\lambda_{j,n}}\right),
(D^{1/2}U_{j'})\left(\frac{x-x_{j',n}}{\lambda_{j',n}},\tilde t_{j'}\right)
\right>.
\end{equation*}
We now use \eqref{eq320} and Lemma \ref{lem36} for $t>0$ or $t<0$, according to the limit of $\tilde t_{j,n}$, to conclude. The case  $(\lambda_{j,n}/\lambda_{j',n})\to0$ is completely analogous.

The next case is $\lambda_{j,n}=\lambda_{j',n}$, $|t_{j,n}-t_{j',n}|/\lambda_{j,n}^2\to+\infty$. In this case, since $\tilde t_{j,n}-\tilde t_{j',n}=(-t_{j,n}+t_{j',n})/\lambda_{j,n}^2$ and $|\tilde t_{j',n}|\leq C_{j'}$, we see that $|\tilde t_{j,n}|$ is unbounded. But then, the argument above applies, giving the proof in this case.

The final case is $\lambda_{j,n}=\lambda_{j',n}$, $|t_{j,n}-t_{j',n}|/\lambda_{j,n}^2\leq C$ and 
$|(x_{j,n}-x_{j',n})/\lambda_{j,n}|\to+\infty$. In this case , $|\tilde t_{j,n}|\leq C_{j}$ and we are reduced to considering
\begin{equation*}
\frac{1}{\lambda_{j,n}^{3}}
\left<D^{1/2}U_j\left(\frac{x-x_{j,n}}{\lambda_{j,n}},\tilde t_{j}\right),
D^{1/2}U_{j'}\left(\frac{x-x_{j',n}}{\lambda_{j,n}},\tilde t_{j'}\right)
\right>.
\end{equation*}
A change of variables and approximation bt $C_0^\infty$ functions yields this case.

By symmetry, we are reduced then to consider the case when both $\tilde t_{j,n}$ and $\tilde t_{j',n}$ are unbounded. Asume (say) $\tilde t_{j,n}\to+\infty$, $\tilde t_{j',n}\to+\infty$. By scattering, we are reduced to considering
\begin{equation*}
\frac{1}{\lambda_{j,n}^{3/2}\lambda_{j',n}^{3/2}}
\left<D^{1/2}\mathrm{e}^{i\tilde t_{j,n}\Delta}h_{j}\left(\frac{x-x_{j,n}}{\lambda_{j,n}}\right),
D^{1/2}\mathrm{e}^{i\tilde t_{j',n}\Delta}h_{j'}\left(\frac{x-x_{j',n}}{\lambda_{j',n}}\right)
\right>.
\end{equation*}
But, using \eqref{eq333} and \eqref{eq320}, we see that this equals 
\begin{equation*}
\frac{1}{\lambda_{j,n}^{3/2}\lambda_{j',n}^{3/2}}
\left<(D^{1/2}h_{j})_{\lambda_{j,n},x_{j,n}}(x),
\mathrm{e}^{i(\tilde t_{j,n}-\tilde t_{j',n})\Delta}(D^{1/2}h_{j',\lambda_{j',n},x_{j',n}})(x)
\right>.
\end{equation*}
But, this coincides with \eqref{eq321}, which we have already shown goes to 0, concluding the proof of 
\eqref{eq331}.

In order to establish \eqref{eq332}, we consider the first case, $|\tilde t_{j,n}|\leq C_j$, which after passing to a subsequence in $\beta$ , follows from \eqref{eq315} and Lemma \ref{lem36}. The case when 
$\tilde t_{j,n}$ is unbounded follows analogously, using scattering. This finishes the proof of Step 7.
 $\square$ \end{proof}

To conclude the proof of Propostion \ref{pp33}, note that, because of \eqref{eq329} we have
\begin{equation*}
A_{n(m_\beta(k_\alpha),k_\alpha)}^2\geq A_{k_\alpha}^2+\epsilon_{k_\alpha}(\beta).
\end{equation*}
Letting $\beta\to\infty$ we see that $A_C^2\geq A_{k_\alpha}^2$. Letting $\alpha\to\infty$ we obtain
$A_C^2\geq A^2\geq A_C^2$, so that $A^2=A_C^2$ and $U_{j_0}$ is our critical element (see \eqref{eq328}).
 $\square$ \end{proof}

\begin{rem}\label{rem334}
The above proof shows that, for $j\neq j_{0}$, we must have $V_{0,j}=0$ and that $w_n^J\to0$ in $\hdt$. Indeed, let $\epsilon>0$ be given, pick $J>j$. We showed that $A_{k_\alpha}^2\xrightarrow[k_\alpha]{}A_C^2$. Pick 
$k_\alpha$ so large that $|A_{k_\alpha}^2-A_C^2|\leq\epsilon/2$. For this fixed $k_\alpha$, the argument shows that
\begin{equation*}
A_{n(m_\beta(k_\alpha),k_\alpha)}^2\geq A_C^2+\left\lVert\tilde U_{j,n(m_\beta(k_\alpha),k_\alpha)}\left(\tau_{j_0,k_\alpha}^{n(m_\beta(k_\alpha),k_\alpha)}\right)\right\rVert_\hdt^2
-\frac{\epsilon}{2}+\epsilon_{k_\alpha}(\beta)
\end{equation*}
Take now $\beta\to \infty$. We obtain that, for each $\epsilon>0$, there exist $\overline\alpha$,
$\overline\beta$ so that
\begin{equation*}
\left\lVert\tilde U_{j,n(m_{\overline\beta}(k_{\overline\alpha}),k_{\overline\alpha})}\left(\tau_{j_0,k_{\overline\alpha}}^{n(m_{\overline\beta}(k_{\overline\alpha}),k_{\overline\alpha})}\right)\right\rVert_\hdt^2\leq\epsilon.
\end{equation*}
But then, by Theorem \ref{thm24}, $\sup_{t\in(-\infty,+\infty)}||U_j(t)||_\hdt\leq C\epsilon$, so that
$U_j\equiv0$ and hence $V_{0,j}\equiv0$. The argument for $w_n^J$ is similar, using the preservation of the 
$\hdt$ norm by the linear flow.
\end{rem}

\begin{proof}[Proof of Proposition \ref{pp34}]
It follows from the argument in Proposition 4.2 of \cite{KM}, using Remark \ref{rem334}.
\begin{rem}\label{rem335}
Because of the continuity of $u(t)$, $t\in[0,T_+(u_0))$ in $\hdt$, in Proposition \ref{pp34} we can construct 
$\lambda(t)$, $x(t)$ continuous in $[0,T_+(u_0))$, with $\lambda(t)>0$ for each $t\in[0,T_+(u_0))$. (See the proof in Remark 5.4 of \cite{KM}).
\end{rem}
\begin{lem}\label{lem336}
Let $u$ be a critical element as in Proposition \ref{pp34}. Then there is a (possibly different) element $w$, with a corresponding $\tilde\lambda$, and $M_0>0$, so that $\tilde\lambda(t)\leq M_0$ for $t\in[0,T_+(w_0))$,
$||w||_{S([0,T_+(w_0))}=+\infty$, $$\sup_{t\in[0,T_+(w_0))}||w(t)||_\hdt<\infty.$$
\end{lem}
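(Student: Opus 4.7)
\textit{Plan.} I would argue by dichotomy on whether $\lambda(\cdot)$ is bounded, following the rescaling strategy of Remark~5.4 in~\cite{KM}. If $M:=\sup_{t\in[0,T_{+}(u_{0,C}))}\lambda(t)<\infty$, simply take $w=u_C$, $\tilde\lambda=\lambda$, and $M_0=M$; the remaining conclusions follow from Propositions~\ref{pp33}--\ref{pp34}. So suppose $\sup_{t}\lambda(t)=+\infty$.

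Set $\lambda^\flat(\tau):=\inf_{s\in[\tau,T_{+}(u_{0,C}))}\lambda(s)$, which is non-decreasing in $\tau$ and satisfies $\lambda^\flat(\tau)\le\lambda(\tau)<\infty$. By the definition of infimum, for each $\tau_0$ there is $s^{*}\ge\tau_0$ with $\lambda(s^{*})\le 2\lambda^\flat(\tau_0)\le 2\lambda^\flat(s^{*})$, so one selects a sequence $\tau_n\to T_{+}(u_{0,C})$ with $\lambda(\tau_n)\le 2\lambda^\flat(\tau_n)$. Define
\begin{equation*}
v_n(x,t)=\frac{1}{\lambda(\tau_n)}\,u_C\!\left(\frac{x-x(\tau_n)}{\lambda(\tau_n)},\,\frac{t}{\lambda(\tau_n)^{2}}+\tau_n\right),
\end{equation*}
a solution of~\eqref{cp} on $[0,(T_{+}(u_{0,C})-\tau_n)\lambda(\tau_n)^{2})$. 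Writing $u_C(y,s)=\lambda(s)\varphi_s(\lambda(s)y+x(s))$ with $\varphi_s\in K$, a direct calculation yields the compactness representation $v_n(\cdot,t)=\tilde\lambda_n(t)^{-1}\varphi_{s_n(t)}((\cdot-\tilde x_n(t))/\tilde\lambda_n(t))$ with $\tilde\lambda_n(t)=\lambda(\tau_n)/\lambda(s_n(t))$ and $s_n(t):=\tau_n+t/\lambda(\tau_n)^{2}\ge\tau_n$; since $\lambda(s_n(t))\ge\lambda^\flat(\tau_n)\ge\lambda(\tau_n)/2$, we get $\tilde\lambda_n(t)\le 2$ uniformly in $t\ge 0$ and $n$.

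As $v_n(\cdot,0)=\varphi_{\tau_n}\in K$ and $\overline K$ is compact, a subsequence converges in $\hdt$ to some $w_0$. I would rule out $w_0=0$ using Remark~\ref{rem25}: if $\|u_C(\cdot,t_0)\|_\hdt<\tilde\delta$ for some $t_0$, then the Cauchy problem from $t_0$ scatters, forcing $\|u_C\|_{S(t_0,T_{+}(u_{0,C}))}<\infty$, and together with finite $S$-norm on $[0,t_0]$ this contradicts the non-scattering of $u_C$. Hence $\|v_n(\cdot,0)\|_\hdt=\|u_C(\cdot,\tau_n)\|_\hdt\ge\tilde\delta$, so $\|w_0\|_\hdt\ge\tilde\delta>0$. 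Let $w$ solve~\eqref{cp} with data $w_0$. Theorem~\ref{thm212} and Remark~\ref{rem213} give $v_n(\cdot,t)\to w(\cdot,t)$ in $\hdt$ for each $t\in[0,T_{+}(w_0))$, whence $\sup_t\|w(t)\|_\hdt\le A_C$, i.e.\ $w_0\in B(A_C)$. Non-scattering of $w$ is by contradiction: if $\|w\|_{S(0,T_{+}(w_0))}<\infty$, Remark~\ref{rem29} forces $T_{+}(w_0)=+\infty$ and Theorem~\ref{thm212} transfers a uniform bound to $\|v_n\|_{S(0,+\infty)}$, contradicting $\|v_n\|_{S(0,T_{+}(v_n(0)))}=\|u_C\|_{S(\tau_n,T_{+}(u_{0,C}))}=+\infty$.

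Finally, I would transfer the compactness structure to $w$: for each fixed $t$, extract a subsequence so that $\varphi_{s_n(t)}\to\varphi^{*}(t)\in\overline K$ and $\tilde\lambda_n(t)\to\tilde\lambda(t)\in[0,2]$; then strong $\hdt$-convergence to the nonzero $w(\cdot,t)$ (which is nonzero for every $t$ by uniqueness, since $w_0\neq 0$) rules out the degenerate cases $\tilde\lambda(t)=0$ and $|\tilde x_n(t)|\to\infty$, producing a compactness representation of $w$ with $\tilde\lambda(t)\le M_0:=2$ and some $\tilde x(t)\in\R^{3}$. The main obstacle lies precisely in this last transfer---retaining control of the translation parameter and excluding concentration or spreading in the limit---and is handled by the scale-invariant lower bound $\|w_0\|_\hdt\ge\tilde\delta$ together with the compactness of $\overline K$.
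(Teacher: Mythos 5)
Your reduction to the case $\sup_t\lambda(t)=+\infty$, the nonvanishing of $w_0$, the bound $\sup_t\|w(t)\|_\hdt\leq A_C$, and the transfer of $\|w\|_{S}=+\infty$ by the perturbation theorem are all fine. The genuine gap is in the last step: the scale bound you obtain points in the wrong direction. Your representation reads $v_n(\cdot,t)=\tilde\lambda_n(t)^{-1}\varphi_{s_n(t)}\bigl((\cdot-\tilde x_n(t))/\tilde\lambda_n(t)\bigr)$ with $\varphi_{s_n(t)}\in K$ and $\tilde\lambda_n(t)=\lambda(\tau_n)/\lambda(s_n(t))\leq 2$; passing to the limit this gives $w(x,t)=\tilde\lambda(t)^{-1}\varphi^{*}(t)\bigl((x-\tilde x(t))/\tilde\lambda(t)\bigr)$. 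But the compactness statement required by Lemma \ref{lem336} and by Theorem \ref{thm41} is that $\Lambda(t)^{-1}w\bigl((x-X(t))/\Lambda(t),t\bigr)$ ranges in a compact set with $\Lambda(t)\leq M_0$, and solving your identity for that form forces $\Lambda(t)=1/\tilde\lambda(t)\geq 1/2$: you have produced a \emph{lower} bound on the new modulation parameter, not an upper bound. This is exactly what your selection must give: choosing $\tau_n$ where $\lambda(\tau_n)\lesssim\inf_{s\geq\tau_n}\lambda(s)$ and evolving \emph{forward} only prevents the renormalized solution from spreading; since $\sup_s\lambda(s)=+\infty$, each $v_n$ (and, with no uniformity in $t$, the limit $w$) can still concentrate, i.e.\ $\Lambda(t)$ may be unbounded, which is precisely the scenario the rigidity argument cannot handle (it needs $\alpha_n=1/\lambda(t_n)\geq 1/M_0$ in Lemmas \ref{lem47}--\ref{lem49} and Corollary \ref{cor45}).

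The paper's proof goes the opposite way: pick $t_n\uparrow T_+(u_{0,C})$ with $\lambda(t_n)\uparrow\infty$ and $\lambda(t_n)\geq\max_{[0,t_n]}\lambda$, renormalize at $t_n$, and solve \emph{backward} in time on $(-T_-(w_0),0]$. Then the times reached correspond to $[0,t_n]$ for $u_C$, so the comparison scale $\lambda(t_n+\tau/\lambda(t_n)^2)/\lambda(t_n)\leq 1$, which is an upper bound of the right kind; one also needs the extra argument (using $\lambda(t_n)\to\infty$ and $w_0\neq 0$) that $\varliminf t_n\lambda(t_n)^2\geq T_-(w_0)$, so the backward interval of $w$ stays inside the rescaled lifespan of $u_C$, and finally one takes $\overline w(x,-\tau)$, using the conjugation time-reversal symmetry of \eqref{cp}, as the new critical element defined forward in time. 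To repair your proof you would have to replace your ``near-infimum of the future'' times by ``running maxima of the past'' and reverse the time direction accordingly; as written, the element you construct does not satisfy $\tilde\lambda(t)\leq M_0$ in the sense needed.
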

\begin{proof}
(This type of proof originates in \cite{M}. See also \cite{KM}, page 670, for a similar proof). Because of Remark 
\ref{rem335}, we can assume that there exist $\{t_n\}_{n=1}^\infty$, $t_n\geq0$, $t_n\uparrow T_+(u_0)$ so that
\begin{equation*}
\lambda(t_n)\uparrow+\infty.
\end{equation*}
After possibly redefining $\{t_n\}$ we can assume that
\begin{equation*}
\lambda(t_n)\geq\max_{t\in[0,t_n]}\lambda(t).
\end{equation*}
From our hypothesis,
\begin{equation*}
\frac{1}{\lambda(t_n)}u\left(\frac{x-x(t_n)}{\lambda(t_n)},t_n\right)=w_{0,n}(x)\to w_0(x)
\end{equation*}
in $\hdt$. Since $A_C\geq A_0$, by Theorem \ref{thm24} we have $w_0\not\equiv0$. We now consider solutions of 
\eqref{cp}, $w_n(x,\tau)$, $w(x,\tau)$ with data $w_{0,n}$, $w_0$ at $\tau=0$, defined in maximal intervals
$\tau\in(-T_-(w_{0,n}),0]$, $\tau\in(-T_-(w_{0}),0]$. since $w_{0,n}\to w_0$ in $\hdt$, 
$\varliminf T_-(w_{0,n})\geq T_-(w_0)$ and for each $\tau\in(-T_-(w_{0}),0]$, $w_n(x,\tau)\to w(x,\tau)$ in 
$\hdt$. (See Remark \ref{rem213}.) Moreover, by uniqueness in \eqref{cp}, for $0\leq t_n+\tau/\lambda(t_n)^2$, we have 
\begin{equation*}
w_n(x,\tau)=\frac{1}{\lambda(t_n)}u\left(\frac{x-x(t_n)}{\lambda(t_n)},t_n+\frac{\tau}{\lambda(t_n)^2}\right).
\end{equation*}
Let $\tau_n$ be defined by $t_n+\tau_n/\lambda(t_n)^2=0$. Note that
$\varliminf-\tau_n=\varliminf t_n\lambda(t_n)^2$ $\geq T_-(w_0)$. If not 
\begin{equation*}
w_n(w,\tau_n)=\frac{1}{\lambda(t_n)}u_0\left(\frac{x-x(t_n)}{\lambda(t_n)}\right)\to w(x,\tau_0),\quad
\tau_0\in(-T_-(w_0),0],
\end{equation*}
in $\hdt$, which is a contradiction to $\lambda(t_n)\uparrow+\infty$, $w_0\not\equiv0$. Thus, for all
$\tau\in(-T_-(w_{0}),0]$, for $n$ large, $0\leq t_n+\tau/\lambda(t_n)^2\leq t_n$.

Fix now $\tau\in(-T_-(w_{0}),0]$ and let $v(x,t)$ be as in Proposition \ref{pp34}. For $n$ sufficiently large, 
$\lambda(t_n+\tau/\lambda(t_n)^2)$ and $v(x,t_n+\tau/\lambda(t_n)^2)$ are defined and we have
\begin{multline}\label{eq337}
v\left(x,t_n+\frac{\tau}{\lambda(t_n)^2}\right)=\\\frac{1}{\lambda(t_n+\tau/\lambda(t_n)^2)}
u\left(\frac{x-x(t_n+\tau/\lambda(t_n)^2)}{\lambda(t_n+\tau/\lambda(t_n)^2)},
t_n+\tau/\lambda(t_n)^2\right)=\\
\frac{1}{\tilde\lambda_n(\tau)}w_n\left(\frac{x-\tilde x_n(\tau)}{\tilde\lambda_n(\tau)},\tau\right),
\end{multline}
where
\begin{equation*}
\tilde\lambda_n(\tau)=\frac{\lambda(t_n+\tau/\lambda(t_n)^2)}{\lambda(t_n)},
\end{equation*}
\begin{equation*}
\tilde x_n(\tau)=x(t_n+\tau/\lambda(t_n)^2)-x(t_n)/\tilde\lambda(t_n).
\end{equation*}

Note that $0<\tilde\lambda_n(\tau)\leq1$. Note also that $||w_n(\cdot,\tau)||_\hdt\leq A_C$, for each $\tau$, so that 
\begin{equation*}
\sup_{t\in(-T_0(w_0),0]} ||w(\tau)||_\hdt\leq A_C. 
\end{equation*}
Note also that $||w||_{S(-T_-(w_0),0)}=\infty$. Otherwise, $T_-(w_{0})=+\infty$ and 
by Theorem \ref{thm212}, for $n$ large, $T_-(w_{0,n})=+\infty$ and 
$||w_n||_{S(-\infty,0)}\leq M$, which contradicts 
$||u||_{S(0,T_+(u_0))}=+\infty$. Finally, since 
\begin{equation*}
\frac{1}{\lambda_n^{3/2}}h((x-x_n)/\lambda_n)\xrightarrow[n\to\infty]{}h_0
\end{equation*}
in $L^2$, with either 
$\lambda_n\to 0 \text{ or } \infty$ or $|x_n|\to\infty$, implies that $h_0\equiv0$ and since no element in 
$\overline K$ can be zero by $A_C\geq A_0>0$ and uniqueness in \eqref{cp}, we can assume, after passing to a subsequence that $\tilde\lambda_n(\tau)\to\tilde\lambda(\tau)$, $0<\tilde\lambda(\tau)\leq1$, 
$\tilde x_n(\tau)\to\tilde x(\tau)\in\R^3$. But then
\begin{equation*}
\frac{1}{\tilde\lambda(\tau)}w\left(\frac{x-\tilde x(\tau)}{\tilde\lambda(\tau)},\tau\right)\in\overline K
\end{equation*}
as desired. (Actually we should take $\overline w(x,-\tau)$ as our new  critical element.)
 $\square$ \end{proof}

\end{proof}

\section{Rigidity Theorem}

In this section we will prove the following:
\begin{thm}\label{thm41}
Assume that $u_0\in\hdt$ is such that, for $u$ the solution of \eqref{cp} with maximal interval $[0,T_+(u_0))$, we have the following properties:
\begin{itemize}
\item[i)] $\displaystyle\sup_{0\leq t<T_+(u_0)}||u(t)||_\hdt\leq A$
\item[ii)] $\displaystyle||u||_{S(0,T_+(u_0))}=+\infty$
\item[iii)] There exist continuous functions $\lambda(t)$, $x(t)$ in $[0,T_+(u_0))$, with $0<\lambda(t)\leq M_0$, $t\in[0,T_+(u_0))$, so that
\begin{equation*}
K=\left\{v(x,t)=\frac{1}{\lambda(t)}u\left(\frac{x-x(t)}{\lambda(t)},t\right)\right\}
\end{equation*}
has compact closure in $\hdt$.
\end{itemize}
Then no such $u_0$ exists.
\end{thm}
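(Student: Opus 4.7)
The plan is to combine the Lin--Strauss Morawetz estimate
\[
\int_0^T\!\!\int_{\R^3}\frac{|u(x,t)|^4}{|x|}\,dx\,dt\leq CA^2
\]
(valid on any $[0,T]\subset[0,T_+(u_0))$ by hypothesis (i)) with the compactness of $\overline K$ to produce a contradiction. The proof splits into two cases.

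\textbf{Case $T_+(u_0)<\infty$.} I would use $\lambda(t)\leq M_0$ and the compactness of $\overline K$ to rule this out directly. Pick any $t_n\uparrow T_+$. By compactness, after passing to a subsequence, the profile $v(\cdot,t_n)$ converges in $\hdt$ to some $v_\infty$. Solve \eqref{cp} locally with data $v_\infty$ on a uniform interval $[-\delta,\delta]$, apply Theorem~\ref{thm212}/Remark \ref{rem213} to get the same for the $v(\cdot,t_n)$, and rescale back: because $\lambda(t_n)\leq M_0$, this produces an extension of $u$ to an interval of length at least $\delta/M_0^2$ beyond $t_n$, which for $n$ large overshoots $T_+$ and contradicts Lemma~\ref{lem28}.

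\textbf{Case $T_+(u_0)=+\infty$.} The strategy is to bound the Morawetz integrand below by a positive constant for each $t$, so that the spacetime integral grows linearly in $T$ and violates the uniform upper bound above. For this I need three ingredients:

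(1) A lower bound $\lambda(t)\geq\lambda_0>0$. Otherwise, along $\lambda(t_n)\to 0$, a compactness argument combined with the definition of $v$ would force $\|u(t_n)\|_\hdt\to 0$, so by Theorem~\ref{thm24} we would have $\|u\|_{S([t_n,+\infty))}<\infty$, contradicting $\|u\|_{S(0,T_+(u_0))}=+\infty$ (hypothesis (ii)). The real content is that $0\notin\overline K$, which rests on small-data theory.

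(2) A bound $|x(t)|\leq C$. This is the main technical obstacle: there is no momentum conservation at the $\hdt$ regularity to immediately control the translation parameter. I would establish it by a contradiction--compactness argument: assuming $|x(t_n)|\to\infty$ along some sequence, one extracts a profile and uses the Morawetz upper bound together with the essentially disjoint supports of $u(\cdot,t_n)$ to reach a conflict with compactness (similar in spirit to the arguments in \cite{KM}).

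(3) Once $\lambda_0\leq\lambda(t)\leq M_0$ and $|x(t)|\leq C$ are in hand, the hypothesis that $\overline K$ is compact in $\hdt\hookrightarrow L^3$ together with $0\notin\overline K$ gives $\|v(t)\|_{L^3}\geq c>0$ uniformly, and an $R_0$ beyond which the $L^3$-tails are uniformly small. Undoing the scaling shows that, for some fixed $R^\ast=R^\ast(C,\lambda_0,M_0,R_0)$,
\[
\int_{|x|\leq R^\ast}|u(x,t)|^3\,dx\geq c_0>0\qquad\text{for all }t\geq 0.
\]
By H\"older on the bounded set $B(0,R^\ast)$,
\[
c_0\leq\int_{|x|\leq R^\ast}|u|^3\,dx\leq\Bigl(\int_{|x|\leq R^\ast}|u|^4\,dx\Bigr)^{3/4}|B(0,R^\ast)|^{1/4},
\]
so (at almost every $t$, which is all we need since $|u|^4/|x|\in L^1_{t,x}$ by Morawetz) we have $\int_{|x|\leq R^\ast}|u|^4\,dx\geq c_1>0$, whence
\[
\int_{\R^3}\frac{|u(x,t)|^4}{|x|}\,dx\geq\frac{c_1}{R^\ast}\qquad\text{for a.e. }t\geq 0.
\]
Integrating from $0$ to $T$ yields $c_1 T/R^\ast\leq CA^2$, a contradiction as $T\to+\infty$.

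The main obstacle, as indicated, is step (2): controlling $x(t)$ with no available momentum-type conservation. The remaining steps are essentially a clean application of Lin--Strauss plus compactness, and the final H\"older upgrade from $L^3$ to $L^4$ on a bounded ball is what allows the low-regularity setting $\hdt\hookrightarrow L^3$ (which does not embed into $L^4$) to still feed into the $|u|^4$ Morawetz weight.
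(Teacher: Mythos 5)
Your reduction in the case $T_+(u_0)<\infty$ (uniform local extension via compactness and $\lambda(t)\leq M_0$) is fine and matches Corollary \ref{cor45} i), and your final H\"older step from $L^3$ on a ball into the $|u|^4/|x|$ Morawetz weight is exactly the mechanism used in Corollary \ref{cor45} ii). But the two inputs you need for the global case are genuine gaps, and the first is justified incorrectly. For (1), the claim that $\lambda(t_n)\to 0$ would force $\|u(t_n)\|_\hdt\to 0$ is false: the map $u\mapsto \frac{1}{\lambda}u\bigl(\frac{\cdot-x}{\lambda}\bigr)$ is an isometry of $\hdt$, so $\|v(t)\|_\hdt=\|u(t)\|_\hdt$ no matter how small $\lambda(t)$ is; small-data theory gives $0\notin\overline K$ (i.e. $\|u(t)\|_\hdt\geq A_0$), but that yields no lower bound on $\lambda(t)$. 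Indeed the theorem as stated only assumes $\lambda(t)\leq M_0$, and the paper never proves nor uses a lower bound on $\lambda$. For (2), the bound $|x(t)|\leq C$ is precisely what cannot be obtained here: at $\hdt$ regularity there is no momentum/center-of-mass conservation, and the sketch ``extract a profile and use disjoint supports, as in \cite{KM}'' does not produce it (in \cite{KM} such control comes from conservation laws unavailable at this regularity). Since both unproved inputs are essential to your pointwise-in-time lower bound on the Morawetz integrand, the argument as proposed does not close.

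The paper's proof is structured exactly so as to avoid (1) and (2). Instead of a pointwise lower bound, it works on the local time blocks $[t_n,t_{n+1}]$, $t_{n+1}=t_n+\tau_0/\lambda(t_n)^2$, where compactness gives a uniform $L^3$ lower bound on a ball \emph{around} $x(t_n)$ (Lemma \ref{lem42} ii)), hence $I_n\gtrsim \bigl(|x(t_n)|+R_0\bigr)^{-1}$ (Corollary \ref{cor45} ii)), with $x(t_n)$ allowed to be unbounded. The drift of $x$ is then controlled incrementally: Lemma \ref{lem42} iii), iv) give $\bigl|\frac{x(t_{n+1})}{\lambda(t_{n+1})}-\frac{x(t_n)}{\lambda(t_n)}\bigr|\leq \frac{M_1}{\lambda(t_{n+1})}$, so $I_n\gtrsim \alpha_n/\sum_{j\leq n}\alpha_j$ with $\alpha_n=1/\lambda(t_n)\geq 1/M_0$ (Lemma \ref{lem47}), and the Abel--Dini-type Lemma \ref{lem49} shows $\sum_n I_n=+\infty$ even when $|x(t_n)|\to\infty$ or $\lambda(t_n)\to 0$. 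This diverging lower bound is then contrasted with the Lin--Strauss upper bound, which at $\hdt$ regularity itself requires an approximation by $H^1\cap\hdt$ data together with the continuous dependence of Remark \ref{rem213} (Lemma \ref{lem412}) --- a point your proposal also glosses over when asserting the Morawetz bound directly from hypothesis i). If you want to salvage your outline, you should replace steps (1)--(2) by this block decomposition and summation argument rather than trying to prove uniform bounds on $\lambda(t)$ and $x(t)$ that are neither available nor needed.
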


For the proof of Theorem \ref{thm41}, note that, by translation and scaling, we can assume $x(0)=0$, $\lambda(0)=1$. Moreover, in light of ii) and Theorem \ref{thm24}, we can assume that $||u(t)||_\hdt\geq A_0>0$ for each $t\in[0,T_+(u_0))$. From now on we consider such a $u$. We need some lemmas in order to carry out the proof of Theorem \ref{thm41}.

\begin{lem}\label{lem42}
Let $t_0\in[0,T_+(u_0))$, 
\begin{equation*}
w_0=\frac{1}{\lambda(t_0)}u\left(\frac{x-x(t_0)}{\lambda(t_0)},t_0\right)\in K,
\end{equation*}
 and $w(x,t)$ be the solution of 
\eqref{cp} with data $w_0$. Then there exist $\tau_0=\tau_0(K)>0$, $\alpha_0=\alpha_0(K)>0$, $R_0=R_0(K)>0$, $M_1=M_1(K)>0$, so that,
\begin{itemize}
\item[i)] $w(t)$ is defined for $[0,2\tau_0]$
\item[ii)] $\forall t\in[0,\tau_0]$, we have
\begin{equation*}
||w(t)||_{L^3(|x|\leq R_0)}\geq\alpha_0
\end{equation*}
\item[iii)] $\displaystyle\frac{1}{M_1}\leq\frac{\lambda(t_0+t/\lambda(t_0)^2)}{\lambda(t_0)}\leq M_1$
\item[iv)] $\displaystyle\left|x(t_0+t/\lambda(t_0)^2)-\frac{\lambda(t_0+t/\lambda(t_0)^2)}{\lambda(t_0)}x(t_0)\right|\leq M_1$.
\end{itemize}
\end{lem}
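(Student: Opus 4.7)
The plan is to derive all four conclusions from the $\hdt$-compactness of $\overline K$, the Sobolev embedding $\hdt\hookrightarrow L^3(\R^3)$, and the $\hdt$-critical scaling structure of \eqref{cp}. Conclusion (i) is a direct application of Remark \ref{rem214}: since $\overline K$ is compact in $\hdt$, the remark produces $T_K^+>0$ so that $T_+(w_0)>T_K^+$ for every $w_0\in K$, and I set $2\tau_0=T_K^+$. The same remark shows that the trajectory family $\tilde K=\{w(\cdot,s):s\in[0,\tau_0],\,w_0\in K\}$ has compact closure in $\hdt$. By scale invariance of \eqref{cp},
\[
w(x,s)=\frac{1}{\lambda(t_0)}\,u\!\left(\frac{x-x(t_0)}{\lambda(t_0)},\,t_0+\frac{s}{\lambda(t_0)^2}\right),
\]
a critical rescaling-translation of $u(\cdot,\bar t)$ with $\bar t=t_0+s/\lambda(t_0)^2$. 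Consequently $\|w(s)\|_\hdt=\|u(\cdot,\bar t)\|_\hdt\geq A_0>0$, and by continuity every $g\in\overline{\tilde K}$ satisfies $\|g\|_\hdt\geq A_0$.

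For (ii), I would transfer the compactness from $\hdt$ to $L^3$ via the continuous Sobolev embedding, so that $\overline{\tilde K}$ is also relatively compact in $L^3(\R^3)$. I then extract two uniform properties. First, a uniform $L^3$ lower bound: if a sequence $g_n\in\overline{\tilde K}$ had $\|g_n\|_{L^3}\to 0$, the $\hdt$-compactness would give a subsequence converging in $\hdt$, hence in $L^3$, to a function that is zero in $L^3$ yet has $\hdt$-norm at least $A_0$, a contradiction. Hence $\inf_{g\in\overline{\tilde K}}\|g\|_{L^3}=2\alpha_0>0$. Second, compactness in $L^3$ yields uniform tightness (Kolmogorov--Riesz): there exists $R_0=R_0(K)$ with $\|g\|_{L^3(|x|>R_0)}\leq\alpha_0$ uniformly on $\overline{\tilde K}$. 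Together these give $\|w(s)\|_{L^3(|x|\leq R_0)}\geq\alpha_0$, which is (ii).

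For (iii) and (iv), writing $u(z,\bar t)=\lambda(\bar t)\,v(\lambda(\bar t)z+x(\bar t),\bar t)$ and substituting yields
\[
w(x,s)=\beta\,v(\beta x-c,\bar t),\qquad \beta=\frac{\lambda(\bar t)}{\lambda(t_0)},\qquad c=\beta\,x(t_0)-x(\bar t),
\]
with $v(\cdot,\bar t)\in K$; condition (iii) is $\beta\in[1/M_1,M_1]$ and (iv) is $|c|\leq M_1$. I argue by contradiction: suppose that along some sequences $t_n^{(0)},s_n$ the pair $(\beta_n,c_n)$ leaves every compact subset of $(0,\infty)\times\R^3$. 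By compactness of $\overline K$ and $\overline{\tilde K}$, after passing to subsequences $v(\cdot,\bar t_n)\to g$ and $w_n(\cdot,s_n)\to h$ strongly in $\hdt$, with $\|h\|_\hdt\geq A_0>0$. An $\hdt$-critical rescaling/translation lemma---if $g_n\to g$ strongly in $\hdt$ and $(\beta_n,c_n)$ escapes every compact subset of $(0,\infty)\times\R^3$, then $\beta_n g_n(\beta_n\cdot-c_n)\rightharpoonup 0$ weakly in $\hdt$---then forces $h\equiv 0$, a contradiction.

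The hard part is that weak-to-zero lemma, which has to hold uniformly in the three escape regimes $\beta_n\to 0$, $\beta_n\to\infty$, and ($\beta_n$ bounded with) $|c_n|\to\infty$. I would prove it by duality: after a change of variables, pairing against a test function $\phi$ with $|D|^{1/2}\phi\in C_c^\infty(\R^3)$ reduces the $\hdt$ inner product to $\beta_n^{-3/2}\int(|D|^{1/2}g_n)(z)\,(|D|^{1/2}\phi)((z+c_n)/\beta_n)\,dz$. Splitting $|D|^{1/2}g_n=F+R$ with $F\in C_c^\infty$ and $\|R\|_{L^2}<\epsilon$, the $F$-term vanishes in each escape regime (shrinking integration domain when $\beta_n\to 0$, the factor $\beta_n^{-3/2}$ against a bounded volume when $\beta_n\to\infty$, and eventually empty support when $|c_n|\to\infty$ with $\beta_n$ bounded), while Cauchy--Schwarz combined with the scale invariance of the $\hdt$-norm controls the $R$-term by $\epsilon\|\phi\|_\hdt$. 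Sending $\epsilon\to 0$ after $n\to\infty$ closes the argument.
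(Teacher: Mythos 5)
Your proposal is correct and follows essentially the same route as the paper: Remark \ref{rem214} for (i), compactness of $\overline K$ (and of the trajectory family) together with the standing lower bound $\|v\|_\hdt\geq A_0$ to get the $L^3$ concentration in (ii), and a compactness-versus-escaping scaling/translation parameters contradiction for (iii) and (iv). The only differences are cosmetic: the paper obtains (ii) from a lower bound at $t=0$ plus equicontinuity in time rather than tightness of the trajectory family in $L^3$, and it simply asserts (rather than proves by the duality/density argument you give) that critically rescaled elements whose parameters $\lambda_n+1/\lambda_n+|x_n|$ tend to infinity cannot converge to a nonzero limit.
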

\begin{proof}
Since $w_0\in K$, we can find $\tau_1(K)>0$ so that the family $\{w(x,t)\}$ is defined and equicontinuous in $[0,2\tau_1]$. (See Remark \ref{rem214}.) We next claim that $\exists\;\alpha_0(K), R_0(K)>0$, so that
\begin{equation}\label{eq43}
||w_0||_{L^3(|x|\leq R_0)}\geq2\alpha_0>0.
\end{equation} 
In fact, $\overline K$ is compact in $\hdt(\R^3)$ and hence in $L^3(\R^3)$. If \eqref{eq43} fails, we can find $w_n\in K$, $R_n\to+\infty$, so that $||w_n||_{L^3(|x|\leq R_n)}\to0$. By passing to a subsequence, we can find $v\in\overline K$ so that $w_n\to v$ in $\hdt$. But then
$||v||_{L^3(|x|\leq R)}=0$ for each $R$, so that $v\equiv0$. But $||v||_\hdt\geq A_0>0$, a contradiction. By equicontinuity on 
$\overline K$, we can find $0<\tau_0<\tau_1$, $\tau_0=\tau_0(K)$ so that, for each $t\in[0,\tau_0]$, $||w_0-w(t)||_{L^3}\leq\alpha_0$, so that $||w(t)||_{L^3(|x|\leq R_0)}\geq\alpha_0>0$.

To show iii) and iv), define
\begin{equation*}
\begin{split}
\lambda_{t_0}(t)&=\frac{\lambda(t_0+t/\lambda(t_0)^2)}{\lambda(t_0)}\\
x_{t_0}(t)&=x(t_0+t/\lambda(t_0)^2)-\frac{\lambda(t_0+t/\lambda(t_0)^2)}{\lambda(t_0)}x(t_0).
\end{split}
\end{equation*}
Note that, if $t_0+t/\lambda(t_0)^2<T_+(u_0)$, by uniqueness in \eqref{cp} we have that 
\begin{equation}\label{eq44}
w(x,t)=\frac{1}{\lambda(t_0)}u\left(\frac{x-x(t_0)}{\lambda(t_0)},t_0+t/\lambda(t_0)^2\right).
\end{equation}
This shows that for $t\in[0,2\tau_0]$, $t_0+t/\lambda(t_0)^2<T_+(u_0)$. Moreover, from \eqref{eq44} we see that
\begin{multline*}
\frac{1}{\lambda_{t_0}(t)}w\left(\frac{x-x_{t_0}(t)}{\lambda_{t_0}(t)},t\right)=\\=
\frac{1}{\lambda(t_0+t/\lambda(t_0)^2)}u\left(\frac{x-x(t_0+t/\lambda(t_0)^2)}{\lambda(t_0+t/\lambda(t_0)^2)},t_0+t/\lambda(t_0)^2\right)
\in K.
\end{multline*}
To conclude that $\exists M_1$ so that $\forall t\in[0,\tau_0]$, $\forall t_0\in[0,T_+(u_0))$, we have 
$\frac{1}{M_1}\leq\lambda_{t_0}(t)\leq M_1$, $|x_{t_0}(t)|\leq M$, assume not. Then there is a sequence $w_{0;t_{0,n}}\in K$, with corresponding solution  $w_n$, so that 
\begin{equation*}
\frac{1}{\lambda_n}w_n\left(\frac{x-x_n}{\lambda_n},t_n\right)\in K
\end{equation*}
and
$\lambda_n+{1}/{\lambda_n}+|x_n|\to\infty$, where $\lambda_n=\lambda_{t_{0,n}}(t_n)$, $x_n=x_{t_{0,n}}(t_n)$. After taking a subsequence, we can assume $t_n\to\overline t\in[0,2\tau_0]$, $w_{0;t_{0,n}}\to v_0\in\overline K$, 
\begin{equation*}
\frac{1}{\lambda_n}w_n\left(\frac{x-x_n}{\lambda_n},t_n\right)\to v_1\in \overline K.
\end{equation*}
 Since $w_n(t_n)\to v(\overline t)$, where $v$ is the solution corresponding to $v_0$, we see that 
\begin{equation*}
\frac{1}{\lambda_n}v\left(\frac{x-x_n}{\lambda_n},\overline t\right)\to v_1.
\end{equation*}
 But, since 
$\lambda_n+{1}/{\lambda_n}+|x_n|\to\infty$, $v_1\equiv0$, which is a contradiction, since $||v||_\hdt\geq A_0>0$ for all 
$v\in\overline K$. This concludes the proof of Lemma \ref{lem42}.
 $\square$ \end{proof}

\begin{cor}\label{cor45}
For all $t_0\in[0,T_+(u_0))$ we have
\begin{itemize}
\item[i)] $\displaystyle\frac{2\tau_0}{\lambda(t_0)^2}\leq T_+(u_0)-t_0$
\item[ii)] $\displaystyle\int_{t_0}^{t_0+\tau_0/\lambda(t_0)^2}\int\frac{|u|^4}{|x|}dxdt\geq
\frac{C_0(K)}{|x(t_0)|+R_0}$,
\end{itemize}
where $C_0(K)>0$, $R_0$ is as in Lemma \ref{lem42}.
\end{cor}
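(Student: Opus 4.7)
The plan is to read off both conclusions from Lemma \ref{lem42} via the scaling identity \eqref{eq44}, which identifies $u$ on the window $[t_0,\,t_0+\tau_0/\lambda(t_0)^2]$ with the rescaled solution $w$ on $[0,\tau_0]$.

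For i), Lemma \ref{lem42} i) already provides $w$ on $[0,2\tau_0]$. Combining \eqref{eq44} with uniqueness for \eqref{cp}, one uses $w$ to extend $u$ up to time $t_0+2\tau_0/\lambda(t_0)^2$, giving $T_+(u_0)-t_0\geq 2\tau_0/\lambda(t_0)^2$.

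For ii), I would change variables $(x,t)\mapsto(X,T)$ with $X=x(t_0)+\lambda(t_0)x$ and $T=\lambda(t_0)^2(t-t_0)$. By \eqref{eq44}, $u(x,t)=\lambda(t_0)w(X,T)$, so $|u|^4=\lambda(t_0)^4|w|^4$, $|x|=|X-x(t_0)|/\lambda(t_0)$, and $dx\,dt=\lambda(t_0)^{-5}\,dX\,dT$. All powers of $\lambda(t_0)$ cancel exactly ($4+1-5=0$), producing
\begin{equation*}
\int_{t_0}^{t_0+\tau_0/\lambda(t_0)^2}\!\!\int\frac{|u|^4}{|x|}\,dx\,dt\;=\;\int_0^{\tau_0}\!\!\int\frac{|w(X,T)|^4}{|X-x(t_0)|}\,dX\,dT.
\end{equation*}
On $\{|X|\leq R_0\}$ one has $|X-x(t_0)|\leq|x(t_0)|+R_0$, so the weight pulls out as $1/(|x(t_0)|+R_0)$.

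It remains to bound $\int_0^{\tau_0}\!\int_{|X|\leq R_0}|w(X,T)|^4\,dX\,dT$ below by a constant depending only on $K$. H\"older on $B_{R_0}$ gives $\|w(T)\|_{L^3(B_{R_0})}^3\leq\|w(T)\|_{L^4(B_{R_0})}^3\,|B_{R_0}|^{1/4}$, which combined with Lemma \ref{lem42} ii) yields $\|w(T)\|_{L^4(B_{R_0})}^4\geq c\alpha_0^4/R_0$ uniformly for $T\in[0,\tau_0]$. Integrating in $T$ produces a lower bound of the form $c\alpha_0^4\tau_0/R_0$, which one takes to be $C_0(K)$. No serious obstacle is expected; the only delicate point is to carry out the change of variables in the correct direction so that the powers of $\lambda(t_0)$ cancel exactly, which is essential because Theorem \ref{thm41} provides only an upper, and not a lower, bound on $\lambda$.
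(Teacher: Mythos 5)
Your proposal is correct and follows essentially the same route as the paper: both parts rest on the identity \eqref{eq44}, with i) coming from the observation (already made in the proof of Lemma \ref{lem42}) that $w$ exists on $[0,2\tau_0]$, and ii) from the scale-invariant change of variables, restriction to the ball $|X|\leq R_0$ to pull out the weight $1/(|x(t_0)|+R_0)$, and Lemma \ref{lem42} ii) to bound the remaining integral below. Your explicit H\"older step from $L^3$ to $L^4$ on $B_{R_0}$ is exactly what the paper leaves implicit in the constant $C_{R_0}\tau_0$.
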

\begin{proof}
i) was observed after \eqref{eq44}. For ii), change variables to see that the integral in ii) equals
\begin{equation*}
\int_0^{\tau_0}\int\frac{|u(x,t_0+t/\lambda(t_0)^2)|^4}{|x|}dx\frac{dt}{\lambda(t_0)^2}.
\end{equation*}
But \eqref{eq44} gives that the integral equals
\begin{multline*}
\int_0^{\tau_0}\int\lambda(t_0)^2\frac{|w(\lambda(t_0)x+x(t_0),t)|^4}{|x|}dxdt=\\=
\int_0^{\tau_0}\int\frac{|w(y+x(t_0),t)|^4}{|y|}dydt\geq\\\geq
\frac{1}{|x(t_0)|+R_0}\int_0^{\tau_0}\int_{|y+x(t_0)|\leq R_0}|w(y+x(t_0),t)|^4dydt\geq\\\geq
\frac{C_{R_0}\tau_0}{|x(t_0)|+R_0}
\end{multline*}
by ii) in Lemma \ref{lem42}.
 $\square$ \end{proof}

Let us now define $t_0=0$, $t_{n+1}=t_n+\tau_0/\lambda(t_n)^2$. Note that $0\leq t_n<T_+(u_0)$, by i) in Corollary \ref{cor45}. Moreover $t_n<t_{n+1}$. Let now
\begin{equation}\label{eq46}
I_n=\int_{t_n}^{t_{n+1}}\int\frac{|u|^4}{|x|^4}
\end{equation}
where $u$ is as in Theorem \ref{thm41}.

\begin{lem}\label{lem47}
Let $\alpha_n=1/\lambda(t_n)$, where $t_n$, $I_n$ are as above. Then
\begin{equation}\label{eq48}
\frac{\alpha_n}{\sum_{j=1}^n\alpha_j}\leq CI_n,
\end{equation}
for some fixed constant $C=C(K)$.
\end{lem}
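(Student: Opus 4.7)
The plan is to bootstrap the translation parameter estimate in Lemma \ref{lem42} iv) to control $|x(t_n)|/\lambda(t_n)$ by the partial sum $\sum_{j=1}^n \alpha_j$, and then feed this into the lower bound on $I_n$ provided by Corollary \ref{cor45} ii).

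First, I would apply Lemma \ref{lem42} iv) at $t_0 = t_n$ and $t = \tau_0$. By the very definition of the sequence, $t_n + \tau_0/\lambda(t_n)^2 = t_{n+1}$, so the estimate reads
\begin{equation*}
\left| x(t_{n+1}) - \frac{\lambda(t_{n+1})}{\lambda(t_n)}\, x(t_n)\right| \leq M_1.
\end{equation*}
Dividing by $\lambda(t_{n+1})$ and writing $y_n := x(t_n)/\lambda(t_n)$, this becomes $|y_{n+1} - y_n| \leq M_1/\lambda(t_{n+1}) = M_1 \alpha_{n+1}$. Since $x(0)=0$ and $\lambda(0)=1$ (the normalization made at the start of Section 4), $y_0 = 0$, and iterating gives
\begin{equation*}
\frac{|x(t_n)|}{\lambda(t_n)} \;=\; |y_n| \;\leq\; \sum_{j=1}^n |y_j - y_{j-1}| \;\leq\; M_1 \sum_{j=1}^n \alpha_j.
\end{equation*}
Equivalently, $|x(t_n)|\,\alpha_n \leq M_1 \sum_{j=1}^n \alpha_j$.

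Next, I would invoke Corollary \ref{cor45} ii), which (by monotonicity of the integrand and the fact that $[t_n, t_n + \tau_0/\lambda(t_n)^2] = [t_n, t_{n+1}]$) yields
\begin{equation*}
I_n \;\geq\; \frac{C_0(K)}{|x(t_n)| + R_0}.
\end{equation*}
Multiplying by $\alpha_n$ and using the previous step together with the trivial bound $\alpha_n R_0 \leq R_0 \sum_{j=1}^n \alpha_j$ (since $\alpha_n$ is one of the summands), we obtain
\begin{equation*}
\frac{\alpha_n}{I_n} \;\leq\; \frac{\alpha_n(|x(t_n)| + R_0)}{C_0(K)} \;\leq\; \frac{M_1 + R_0}{C_0(K)} \sum_{j=1}^n \alpha_j,
\end{equation*}
which is \eqref{eq48} with $C = (M_1 + R_0)/C_0(K)$.

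There is no serious obstacle here once one identifies the right bookkeeping variable $y_n = x(t_n)/\lambda(t_n)$: the only mild subtlety is recognizing that Lemma \ref{lem42} iv), which at first sight only controls the \emph{difference} $x(t_{n+1}) - \frac{\lambda(t_{n+1})}{\lambda(t_n)} x(t_n)$, becomes a genuine step-bound on $y_n$ after dividing by $\lambda(t_{n+1})$. Everything else is a one-line telescoping argument combined with the Morawetz-style lower bound already packaged into Corollary \ref{cor45} ii). $\square$
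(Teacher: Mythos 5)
Your proposal is correct and follows essentially the same route as the paper: you telescope the bound of Lemma \ref{lem42} iv) (applied at $t_0=t_n$, $t=\tau_0$ and divided by $\lambda(t_{n+1})$) using the normalization $x(0)=0$, $\lambda(0)=1$ to get $|x(t_n)|/\lambda(t_n)\leq M_1\sum_{j=1}^n\alpha_j$, and then combine with Corollary \ref{cor45} ii) and the trivial bound $\alpha_n R_0\leq R_0\sum_{j=1}^n\alpha_j$, exactly as in the paper. The only differences are cosmetic (order of the algebra and the explicit constant $C=(M_1+R_0)/C_0(K)$), so nothing further is needed.
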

\begin{proof}
From Corollary \ref{cor45}, ii),
\begin{equation*}
\frac{C_0(K)}{|x(t_n)|+R_0}\leq I_n.
\end{equation*}
Moreover, from Lemma \ref{lem42}, iii), iv), we have
\begin{equation*}
\frac{1}{M_1}\leq\frac{\lambda(t_{n+1})}{\lambda(t_n)}\leq M_1
\end{equation*}
and
\begin{equation*}
\left|\frac{x(t_{n+1})}{\lambda(t_{n+1})}-\frac{x(t_{n})}{\lambda(t_{n})}\right|\leq\frac{M_1}{\lambda(t_{n+1})}.
\end{equation*}
But then 
\begin{equation*}
\left|\frac{x(t_{n})}{\lambda(t_{n})}\right|\leq M_1\sum_{j=1}^n\alpha_j
\end{equation*}
and
\begin{multline*}
\frac{1}{|x(t_n)|+R_0}=
\frac{1}{\lambda(t_n)\left[\frac{|x(t_n)|}{\lambda(t_n)}+\frac{R_0}{\lambda(t_n)}\right]}\geq\\\geq
\frac{1}{\lambda(t_n)}\frac{1}{M_1(\sum_{j=1}^n\alpha_j)+R_0\alpha_n}\geq
C\frac{\alpha_n}{\sum_{j=1}^n\alpha_j},
\end{multline*}
as desired.
 $\square$ \end{proof}

\begin{lem}\label{lem49}
Let $\alpha_n$ be a sequence of non-negative real numbers, with $\alpha_n\geq1/M_0$ and 
$\frac{1}{M_1}\leq\frac{\alpha_{n+1}}{\alpha_n}\leq M_1$. Then, if $s_n=\frac{\alpha_n}{\sum_{j=1}^n\alpha_j}$, we have $\sum_{n=1}^\infty s_n=+\infty$.
\end{lem}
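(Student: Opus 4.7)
The plan is to recognize Lemma \ref{lem49} as a version of the classical Abel--Dini theorem: whenever a series of positive terms $\sum \alpha_n$ diverges, the ``normalized'' series $\sum \alpha_n/S_n$ with $S_n=\sum_{j=1}^n\alpha_j$ diverges as well. I note at the outset that the ratio hypothesis $1/M_1\le \alpha_{n+1}/\alpha_n\le M_1$ plays no role in the statement; only the uniform lower bound $\alpha_n\ge 1/M_0$ will be used. (The ratio bound is used elsewhere, in Lemma \ref{lem47}, to establish the hypothesis for the $\alpha_n$ produced there.)

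The first step is immediate: from $\alpha_n\ge 1/M_0$ one gets $S_n\ge n/M_0 \to +\infty$, so the partial sums $S_n$ diverge to $+\infty$.

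The heart of the proof is a Cauchy-tail argument by contradiction. Suppose $\sum_n s_n<+\infty$. Then for any $\varepsilon>0$ there exists $N$ with $\sum_{n>N} s_n<\varepsilon$. Fix such an $N$ and, for any $m>N$, use $S_n\le S_m$ for $n\le m$ to estimate
\begin{equation*}
\sum_{n=N+1}^{m} s_n \;=\; \sum_{n=N+1}^{m}\frac{\alpha_n}{S_n}\;\ge\;\frac{1}{S_m}\sum_{n=N+1}^{m}\alpha_n\;=\;\frac{S_m-S_N}{S_m}\;=\;1-\frac{S_N}{S_m}.
\end{equation*}
Letting $m\to+\infty$ and invoking $S_m\to+\infty$ from the first step, the right-hand side tends to $1$, so $\sum_{n>N} s_n\ge 1$. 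Choosing $\varepsilon=1/2$ produces a contradiction, and we conclude $\sum_n s_n=+\infty$.

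Honestly there is no real obstacle here; the whole content is the one-line telescoping estimate above combined with the elementary observation that $\alpha_n\ge 1/M_0$ forces $S_n\to\infty$. If one wanted a quantitative growth rate such as $\sum_{n\le N} s_n\gtrsim \log N$ (which is the standard sharper form of Abel--Dini), the ratio hypothesis would become useful, but the qualitative divergence needed in Lemma \ref{lem49} does not require it.
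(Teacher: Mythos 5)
Your proof is correct, but it follows a different route than the paper. The paper's argument is an integral-comparison one: it sets $g(r)=\alpha_n$ on $[n,n+1)$, $G(r)=1+\int_1^r g$, uses $\alpha_n\ge 1/M_0$ to get $G\to\infty$ and hence $\int_1^\infty g/G\,ds=\log G\big|_1^\infty=+\infty$, and then bounds this divergent integral above by $\sum_j \alpha_j/(1+\sum_{l\le j-1}\alpha_l)$; the ratio hypothesis $\alpha_{j}\le M_1\alpha_{j-1}$ is then genuinely used to shift the denominator from $\sum_{l\le j-1}\alpha_l$ to $\sum_{l\le j}\alpha_l$ at the cost of a constant, which is exactly the step your argument never needs. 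Your Cauchy-tail contradiction ($\sum_{n=N+1}^m s_n\ge 1-S_N/S_m\to 1$, against a tail smaller than $1/2$) is the classical Abel--Dini proof, is more elementary, and shows the lemma holds under the sole hypothesis that $S_n\to\infty$, so your observation that the ratio bound is superfluous here (it is only needed upstream, in Lemma \ref{lem47} and in Proposition \ref{pp410}) is accurate. What the paper's approach buys in exchange is the quantitative lower bound $\sum_{n\le N}s_n\gtrsim \log S_N$ essentially for free, as you note, though that strengthening is not needed for the application.
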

\begin{proof}
For $1\leq r<\infty$, let $g(r)=\alpha_n$, if $r\in[n,n+1)$. Define $G(r)=1+\int_1^rg(s)ds$. Note that 
$G\uparrow$ and that $G(n+1)\geq1+\sum_{j=1}^n\alpha_j$. Since $\alpha_n\geq1/M_0$, $G(r)\uparrow+\infty$
as $r\to+\infty$. Also, $G'(r)=g(r)$, hence, $\int_1^\infty g(s)/G(s) ds=+\infty$. But,
\begin{equation*}
\int_1^{n+1}\frac{g(s)}{G(s)}ds\leq\sum_{j=1}^n\int_j^{j+1}\frac{g(s)}{G(s)}ds=
\sum_{j=1}^n\frac{\alpha_j}{1+\sum_{l=1}^{j-1}\alpha_l}.
\end{equation*}
but, $\alpha_{j-1}=\frac{1}{2}\alpha_{j-1}+\frac{1}{2}\alpha_{j-1}\geq\frac{1}{2}\alpha_{j-1}+
\frac{1}{2M_1}\alpha_{j}$, so
\begin{equation*}
\int_1^{n+1}\frac{g(s)}{G(s)}ds\leq C\sum_{j=1}^n\frac{\alpha_j}{\sum_{l=1}^{j}\alpha_l},
\end{equation*}
and the Lemma follows.
 $\square$ \end{proof}

\begin{pp}\label{pp410}
Let $u$ be as in Theorem \ref{thm41}. Then,
\begin{equation*}
\int_0^{T_+(u_0)}\int\frac{|u(x,t)|^4}{|x|}dxdt=+\infty.
\end{equation*}
\end{pp}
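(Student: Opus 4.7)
The plan is to show the integral is bounded below by a divergent series, using the chain of lemmas that have just been set up. With $t_n$ and $\alpha_n = 1/\lambda(t_n)$ defined as above, the intervals $[t_n, t_{n+1}]$ are disjoint subsets of $[0, T_+(u_0))$ (by Corollary \ref{cor45} i), each $t_{n+1} \leq t_n + \tau_0/\lambda(t_n)^2 < T_+(u_0)$), so
\begin{equation*}
\int_0^{T_+(u_0)}\int\frac{|u(x,t)|^4}{|x|}\,dx\,dt \;\geq\; \sum_{n=0}^{\infty} I_n.
\end{equation*}
Thus it suffices to prove $\sum_n I_n = +\infty$.

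First I would verify that the hypotheses of Lemma \ref{lem49} are met by the sequence $\{\alpha_n\}$. Since hypothesis iii) of Theorem \ref{thm41} gives $\lambda(t) \leq M_0$ throughout $[0,T_+(u_0))$, we obtain $\alpha_n = 1/\lambda(t_n) \geq 1/M_0$. Next, applying Lemma \ref{lem42} iii) at $t_0 = t_n$ with $t = \tau_0$ (so that $t_n + \tau_0/\lambda(t_n)^2 = t_{n+1}$) yields
\begin{equation*}
\frac{1}{M_1} \leq \frac{\lambda(t_{n+1})}{\lambda(t_n)} \leq M_1,
\end{equation*}
which is equivalent to $\frac{1}{M_1} \leq \frac{\alpha_{n+1}}{\alpha_n} \leq M_1$. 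Both hypotheses of Lemma \ref{lem49} are therefore satisfied.

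Consequently Lemma \ref{lem49} gives $\sum_{n} s_n = +\infty$ where $s_n = \alpha_n / \sum_{j=1}^n \alpha_j$. Combining this with the key estimate \eqref{eq48} from Lemma \ref{lem47}, namely $s_n \leq C I_n$, we conclude
\begin{equation*}
\sum_{n=1}^{\infty} I_n \;\geq\; \frac{1}{C}\sum_{n=1}^{\infty} s_n \;=\; +\infty,
\end{equation*}
which finishes the proof. The argument is essentially a bookkeeping exercise assembling the previous lemmas; the real work (bounding $I_n$ below by a local mass via the compactness of $\overline K$, and controlling the ratios $\lambda(t_{n+1})/\lambda(t_n)$ and $x(t_n)/\lambda(t_n)$) was already done in Lemmas \ref{lem42}, \ref{lem47}, and \ref{lem49}, so no real obstacle remains here beyond checking that the intervals $[t_n,t_{n+1}]$ are in fact all contained in $[0,T_+(u_0))$, which is exactly the content of Corollary \ref{cor45} i).
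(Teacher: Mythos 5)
Your proposal is correct and follows the paper's own argument essentially verbatim: decompose $[0,T_+(u_0))$ into the intervals $[t_n,t_{n+1}]$, bound $\sum_n I_n$ below via \eqref{eq48} from Lemma \ref{lem47}, and check the hypotheses of Lemma \ref{lem49} using $\lambda(t)\leq M_0$ and Lemma \ref{lem42} iii). No issues.
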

\begin{proof}
\begin{multline*}
\int_0^{T_+(u_0)}\int\frac{|u(x,t)|^4}{|x|}dxdt\geq\\\geq
\sum_{n=1}^{+\infty}\int_{t_n}^{t_{n+1}}\int\frac{|u(x,t)|^4}{|x|}dxdt=
\sum_{n=1}^\infty I_n\geq C\sum_{n=1}^\infty\frac{\alpha_n}{\sum_{j=1}^n\alpha_j}=+\infty,
\end{multline*}
in light of \eqref{eq48}, the fact that $\lambda(t)\leq M_0$ and that 
$\frac{1}{M_1}\leq\frac{\alpha_{n+1}}{\alpha_n}\leq M_1$ (Corollary \ref{cor45} ii), combined with Lemma 
\ref{lem49}).
 $\square$ \end{proof}

We now turn to some upper bounds, which are consequences of the Morawetz type identity of Lin--Strauss \cite{LS}.

\begin{lem}\label{lem411}
Let $u_0\in H^1\cap\hdt$. Then, for each $0<T<T_+(u_0)$, we have 
\begin{equation*}
\int_0^T\int\frac{|u(x)|^4}{|x|}dxdt\leq C_0\left[||u(T)||^2_\hdt+||u(0)||^2_\hdt\right]
\end{equation*}
where $u$ is the solution of \eqref{cp}, $C_0$ is independent of $T$.
\end{lem}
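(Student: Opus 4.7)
The plan is to use the Lin--Strauss Morawetz multiplier argument with weight $a(x)=|x|$. Because $u_0\in H^1\cap\hdt$, the subcritical $H^1$ local theory gives $u\in C([0,T];H^1(\R^3))$ for every $T<T_+(u_0)$, which is enough regularity to justify the manipulations below (a standard mollification handles the $\delta_0$-singularity coming from $\Delta^2|x|$).

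Define the Morawetz action
\begin{equation*}
M(t)\;=\;2\,\mathrm{Im}\int \bar u(x,t)\,\frac{x}{|x|}\cdot\nabla u(x,t)\,dx.
\end{equation*}
Differentiating in $t$ and using $i\partial_t u=-\Delta u+|u|^2u$, a standard integration by parts yields
\begin{equation*}
\frac{dM}{dt}\;=\;-\int(\Delta^2 a)|u|^2\,dx\,+\,4\int(\partial_j\partial_k a)\,\mathrm{Re}(\partial_j u\,\overline{\partial_k u})\,dx\,+\,\int(\Delta a)|u|^4\,dx.
\end{equation*}
For $a(x)=|x|$ in $\R^3$, $\Delta a=2/|x|$; the Hessian $(\delta_{jk}-\omega_j\omega_k)/|x|$ (with $\omega=x/|x|$) is positive semidefinite; and $-\Delta^2 a=8\pi\delta_0$ is a nonnegative distribution. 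Dropping the two nonnegative contributions coming from the Hessian and the $\delta_0$,
\begin{equation*}
\frac{dM}{dt}\;\geq\;2\int\frac{|u(x,t)|^4}{|x|}\,dx,
\end{equation*}
and integrating over $[0,T]$ gives
\begin{equation*}
2\int_0^T\!\!\int\frac{|u|^4}{|x|}\,dx\,dt\;\leq\;M(T)-M(0)\;\leq\;|M(T)|+|M(0)|.
\end{equation*}

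The crucial remaining step is the pointwise bound
\begin{equation*}
|M(t)|\;\leq\;C\,\|u(t)\|_{\hdt}^2,
\end{equation*}
which is the main obstacle and the very reason the lemma can be stated in the critical space $\hdt$. A naive Cauchy--Schwarz yields only $|M(t)|\leq 2\|u\|_{L^2}\|\nabla u\|_{L^2}$, which by interpolation is at least $C\|u\|_{\hdt}^2$ and hence gives no information. The idea is to apply Cauchy--Schwarz with the weight $|x|^{1/2}$:
\begin{equation*}
|M(t)|\;\leq\;2\int|u||\nabla u|\,dx\;\leq\;2\bigl\|u/|x|^{1/2}\bigr\|_{L^2}\bigl\||x|^{1/2}\nabla u\bigr\|_{L^2}.
\end{equation*}
The first factor is bounded by $C\|u\|_{\hdt}$ via the fractional (Kato--Herbst) Hardy inequality in $\R^3$. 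For the second, the same Hardy inequality gives by duality $\||x|^{1/2}f\|_{L^2}\leq C\|f\|_{\dot H^{-1/2}}$; applied with $f=\nabla u$ and combined with the Plancherel identity $\sum_j\|\partial_j u\|_{\dot H^{-1/2}}^2=\|u\|_{\hdt}^2$, this gives $\||x|^{1/2}\nabla u\|_{L^2}\leq C\|u\|_{\hdt}$. Putting the two factors together yields $|M(t)|\leq C\|u(t)\|_{\hdt}^2$, and combining with the integrated Morawetz inequality above proves the lemma with $C_0$ independent of $T$.
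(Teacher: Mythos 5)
Your setup is fine: the paper itself simply cites \cite{CKSTT} (Proposition 2.1 and Lemma 2.1), and the Morawetz identity with $a(x)=|x|$, the positivity of the Hessian term and of $-\Delta^2|x|=8\pi\delta_0$, the $H^1$ regularization, and the reduction to the pointwise bound $|M(t)|\leq C\lVert u(t)\rVert_\hdt^2$ are all exactly the standard route. The problem is the last step, which is where the whole content of the lemma lies. The inequality you invoke, $\lVert\,|x|^{1/2}f\rVert_{L^2}\leq C\lVert f\rVert_{\dot H^{-1/2}}$, is not the dual of the Hardy--Herbst inequality and is in fact false: the right-hand side is translation invariant while the left-hand side grows like $R^{1/2}$ when a fixed bump is translated to distance $R$ from the origin. (The correct dual statement is $\lVert\,|x|^{-1/2}h\rVert_{\dot H^{-1/2}}\leq C\lVert h\rVert_{L^2}$, with the weight $|x|^{-1/2}$ again.) Consequently $\lVert\,|x|^{1/2}\nabla u\rVert_{L^2}\leq C\lVert u\rVert_\hdt$ fails by the same translation argument, and your chain collapses.

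The failure is not just in that one inequality but in the strategy: once you pass to $|M(t)|\leq 2\int|u|\,|\nabla u|\,dx$ you have discarded the multiplier structure, and the quantity $\int|u|\,|\nabla u|$ is genuinely not controlled by $\lVert u\rVert_\hdt^2$ (superpose a low-frequency bump of unit $L^2$ norm with an overlapping bump at frequency $N$ of $L^2$ norm $N^{-1/2}$: the product integral is of size $N^{1/2}$ while $\lVert u\rVert_\hdt^2$ stays bounded). The bound $|M(t)|\lesssim\lVert u(t)\rVert_\hdt^2$ is true, but its proof must keep $u$ and $\nabla u$ paired, e.g.
\begin{equation*}
|M(t)|\;\leq\;2\,\bigl\lVert D^{1/2}\bigl(\tfrac{x}{|x|}\,u\bigr)\bigr\rVert_{L^2}\,\bigl\lVert D^{-1/2}\nabla u\bigr\rVert_{L^2}\;\leq\;C\lVert u\rVert_\hdt^2,
\end{equation*}
where one uses that multiplication by the homogeneous degree-zero function $x/|x|$ is bounded on $\hdt$ (alternatively, a Littlewood--Paley argument as in Lemma 2.1 of \cite{CKSTT}). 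With that replacement your argument goes through; as written, the key estimate is false and the proof has a genuine gap.
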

\begin{proof}
See for instance Proposition 2.1 and Lemma 2.1 in \cite{CKSTT}.
 $\square$ \end{proof}

From Lemma \ref{lem411} we imediately obtain:
\begin{lem}\label{lem412}
Let $u$ be a solution of \eqref{cp}, so that $u\in B(A)$ (see Definition \ref{dfn31}). Then,
\begin{equation*}
\int_0^{T_+(u_0)}\int\frac{|u(x)|^4}{|x|}dxdt\leq 2C_0A^2.
\end{equation*}
\end{lem}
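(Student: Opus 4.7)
The plan is to reduce the assertion to Lemma \ref{lem411} by a density/approximation argument, since Lemma \ref{lem411} requires $u_0\in H^1\cap\hdt$ while here $u_0$ is only in $\hdt$. Throughout let $u_0\in\hdt$ with corresponding solution $u\in B(A)$, and fix an arbitrary $T\in[0,T_+(u_0))$.

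Using that $\mathscr{S}(\R^3)$ is dense in $\hdt$, I would choose a sequence $u_{0,n}\in H^1\cap\hdt$ with $u_{0,n}\to u_0$ in $\hdt$. By Remark \ref{rem213}, for all $n$ large one has $T_+(u_{0,n})>T$, and the corresponding solutions $u_n$ satisfy $u_n(T)\to u(T)$ in $\hdt$ (and $u_n(0)\to u_0$ in $\hdt$). Applying Lemma \ref{lem411} on $[0,T]$ yields
\begin{equation*}
\int_0^T\!\!\int\frac{|u_n(x,t)|^4}{|x|}\,dx\,dt\;\leq\;C_0\bigl(\|u_n(T)\|_\hdt^2+\|u_n(0)\|_\hdt^2\bigr).
\end{equation*}
Since $u\in B(A)$ one has $\|u(T)\|_\hdt\leq A$ and $\|u_0\|_\hdt\leq A$, so the right-hand side above converges, as $n\to\infty$, to a quantity bounded by $2C_0 A^2$.

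To pass to the limit on the left-hand side, I would apply Theorem \ref{thm212} (the perturbation theorem) on the fixed interval $[0,T]$, with $\tilde u=u$ (for which $\|u\|_{S([0,T])}<\infty$ and $\|D^{1/2}u\|_{W([0,T])}<\infty$ by Definition \ref{dfn27}) and $e\equiv 0$. Since $\|u_{0,n}-u_0\|_\hdt\to 0$ and hence $\|e^{it\Delta}(u_{0,n}-u_0)\|_{S([0,T])}\to 0$ by Strichartz, the perturbation theorem gives $\|u_n-u\|_{S([0,T])}\to 0$. In particular, after extracting a subsequence, $u_n\to u$ almost everywhere on $\R^3\times[0,T]$, and Fatou's lemma yields
\begin{equation*}
\int_0^T\!\!\int\frac{|u(x,t)|^4}{|x|}\,dx\,dt\;\leq\;\liminf_{n\to\infty}\int_0^T\!\!\int\frac{|u_n(x,t)|^4}{|x|}\,dx\,dt\;\leq\;2C_0 A^2.
\end{equation*}
Finally, letting $T\uparrow T_+(u_0)$ and invoking monotone convergence gives the claimed bound. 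The main (mild) obstacle is the passage to the limit on the left-hand side; everything else is a direct combination of Lemma \ref{lem411} with the $B(A)$ bound. No conservation law is used, as promised.
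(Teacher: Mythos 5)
Your proof is correct and follows essentially the same route as the paper: approximate $u_0$ by data in $H^1\cap\hdt$, use the stability statement (Remark \ref{rem213}, itself a consequence of Theorem \ref{thm212}) to pass the Lin--Strauss bound of Lemma \ref{lem411} to the limit via Fatou, and then let $T\uparrow T_+(u_0)$. Your extra detail justifying the almost-everywhere convergence through the $S([0,T])$-norm convergence from the perturbation theorem is a fine (slightly more explicit) way of carrying out the limit step the paper leaves implicit.
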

\begin{proof}
Fix $T<T_+(u_0)$. Pick $u_{0.n}\in H^1\cap\hdt$, so that $u_{0,n}\to u_0$ in $\hdt$. By Remark \ref{rem213}, for each $0\leq t\leq T$, we have $u_n(t)\to u(t)$ in $\hdt$, where $u_n$ is the solution of \eqref{cp} corresponding to $u_{0,n}$. But,
\begin{multline*}
\int_0^T\int\frac{|u(x)|^4}{|x|}dxdt\leq
\varliminf_{n\to\infty}\int_0^T\int\frac{|u_n(x)|^4}{|x|}dxdt\leq\\ \\\leq
\varliminf_{n\to\infty} C_0\left[||u_n(T)||^2_\hdt+||u_n(0)||^2_\hdt\right]\leq
2 C_0A^2.
\end{multline*}
 $\square$ \end{proof}

Now, Proposition \ref{pp410} and Lemma \ref{lem412} immediately yield Theorem \ref{thm41}.

\section{Conclusion of the proof of Theorem \ref{thm11} and further results}

Note that Lemma \ref{lem336}, combined with Theorem \ref{thm41} immediately yields our main result, Theorem 
\ref{thm11}. We now list some Corollaries.

\begin{cor}\label{cor51}
If $u_0\in\hdt$, $T_+(u_0)<+\infty$, then 
\begin{equation*}
\sup_{t\in[0,T_+(u_0))}||u(t)||_\hdt=+\infty.
\end{equation*}
\end{cor}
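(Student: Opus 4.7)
The plan is to argue by contradiction directly from Theorem \ref{thm11}. Suppose, toward a contradiction, that $u_0 \in \hdt$ satisfies $T_+(u_0) < +\infty$ and yet
\begin{equation*}
\sup_{t \in [0, T_+(u_0))} \|u(t)\|_{\hdt} = A < +\infty.
\end{equation*}
Then the hypotheses of Theorem \ref{thm11} are satisfied with this finite bound $A$. The conclusion of Theorem \ref{thm11} yields $T_+(u_0) = +\infty$, which contradicts the standing assumption $T_+(u_0) < +\infty$.

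Hence no such bounded extension is possible, and we must have $\sup_{t \in [0, T_+(u_0))} \|u(t)\|_{\hdt} = +\infty$, as claimed.

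There is no real obstacle here: the corollary is nothing more than the contrapositive form of Theorem \ref{thm11}, which has already been proved by combining Lemma \ref{lem336} with the rigidity Theorem \ref{thm41} to rule out the existence of a critical element. The only thing to observe is that Theorem \ref{thm11} is stated with the $\hdt$-bound as a hypothesis and $T_+ = +\infty$ (plus scattering) as a conclusion, so the case $T_+(u_0) < +\infty$ is automatically incompatible with a uniform $\hdt$-bound on the solution.
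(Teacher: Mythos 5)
Your proof is correct and follows exactly the route the paper takes: the corollary is indeed just the contrapositive of Theorem \ref{thm11}, and the paper likewise notes it is immediate from that theorem. Nothing further is needed.
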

This is immediate from Theorem \ref{thm11}.

\begin{cor}\label{cor52}
$B(\infty)$ (see Definition \ref{dfn31}) is open in $\hdt$.
\end{cor}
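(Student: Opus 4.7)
The plan is to combine the main theorem (Theorem \ref{thm11}) with the perturbation theorem (Theorem \ref{thm212}). Fix $u_0\in B(\infty)$, so $u_0\in B(A)$ for some $A>0$; write $u$ for the corresponding solution of \eqref{cp}. By Theorem \ref{thm11}, $T_+(u_0)=+\infty$ and $u$ scatters at $+\infty$, and by Remark \ref{rem29} this gives $\|u\|_{S(0,+\infty)}<\infty$. Set $M=\|u\|_{S(0,+\infty)}$. An iteration of Theorem \ref{thm24} on a partition of $[0,+\infty)$ into finitely many intervals on each of which $\|e^{i(t-t_j)\Delta}u(t_j)\|_{S}<\delta(A)$ shows in addition that $\|D^{1/2}u\|_{W(0,+\infty)}<\infty$ and $\sup_{t\geq 0}\|u(t)\|_{\hdt}\leq A$.

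Next, given $\tilde u_0\in\hdt$, set $A'=\|\tilde u_0-u_0\|_{\hdt}$. I will apply Theorem \ref{thm212} on $I=[0,+\infty)$ with $\tilde u=u$ as the approximate solution, error $e\equiv 0$, the constants $M$ and $A$ above, and the same $A'$. By Lemma \ref{lem21},
\begin{equation*}
\|e^{it\Delta}(\tilde u_0-u_0)\|_{S(0,+\infty)}\leq C\|\tilde u_0-u_0\|_{\hdt}\leq CA'.
\end{equation*}
Choose $\eta_0=\eta_0(M,A)>0$ so small that for any $A'\leq \eta_0$ one has $CA'\leq \epsilon_0(M,A,A')$, where $\epsilon_0$ is the threshold furnished by Theorem \ref{thm212}. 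Then for $\tilde u_0$ with $\|\tilde u_0-u_0\|_\hdt\leq\eta_0$, the perturbation theorem produces the solution $\tilde u$ of \eqref{cp} with data $\tilde u_0$, defined on all of $[0,+\infty)$, with $\|\tilde u\|_{S(0,+\infty)}\leq C(A,A',M)$ and
\begin{equation*}
\sup_{t\geq 0}\|\tilde u(t)-u(t)\|_{\hdt}\leq C(A,A',M)(A'+\epsilon+\epsilon').
\end{equation*}
By Lemma \ref{lem28}, $\|\tilde u\|_{S(0,+\infty)}<\infty$ forces $T_+(\tilde u_0)=+\infty$, and the triangle inequality gives $\sup_{t\geq 0}\|\tilde u(t)\|_{\hdt}\leq A+C(A,\eta_0,M)(\eta_0+\epsilon+\epsilon')=:A^{\ast}<\infty$. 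Hence $\tilde u_0\in B(A^{\ast})\subset B(\infty)$, so the open ball of radius $\eta_0$ around $u_0$ in $\hdt$ sits inside $B(\infty)$.

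There is essentially no hard step: the whole argument is a direct invocation of Theorem \ref{thm212}, once Theorem \ref{thm11} is used to unlock global existence, scattering, and a global $S$- and $W$-bound for $u$. The only point requiring a moment's care is the quantifier order: $\epsilon_0$ depends on $A'$, so one must first constrain the size of the perturbation $A'\leq\eta_0$ and then verify that the smallness hypothesis $\|e^{it\Delta}(\tilde u_0-u_0)\|_{S(0,+\infty)}\leq\epsilon_0(M,A,A')$ holds for this range of $A'$; this is immediate from the Strichartz estimate, as above.
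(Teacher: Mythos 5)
Your proof is correct and follows essentially the same route as the paper: Theorem \ref{thm11} gives $T_+(u_0)=+\infty$ and $\|u\|_{S(0,+\infty)}<\infty$ (the paper reads the finite $S$-norm off directly from the fact that $SC(A)$ holds for every $A$, rather than deducing it from scattering via Remark \ref{rem29}, whose implication runs in the opposite direction), and Theorem \ref{thm212} then yields openness of $B(\infty)$ in $\hdt$. The additional details you supply --- the global $W$-bound from iterating Theorem \ref{thm24} and the quantifier order for $\epsilon_0(M,A,A')$ --- are precisely the routine verifications the paper leaves implicit.
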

\begin{proof}
Assume that $u_0\in B(\infty)$, so that for some $A$, $u_0\in B(A)$. In light of Theorem \ref{thm11}, 
$T_+(u_0)=+\infty$ and $||u||_{S(0,+\infty)}<\infty$. But now Theorem \ref{thm212} yields the corollary.
 $\square$ \end{proof}

\begin{cor}\label{cor53}
There exists an increasing function $g(A)$ so that, if $u_0\in B(A)$, we have
\begin{equation*}
||u||_{S(0,\infty)}\leq g(A).
\end{equation*}
\end{cor}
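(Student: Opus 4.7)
The plan is to argue by contradiction, recycling the profile-decomposition machinery of the proof of Proposition \ref{pp33} but replacing its critical-threshold structure with a direct appeal to Theorem \ref{thm11}. Define
\[
g(A) = \sup\{||u||_{S(0,+\infty)} : u_0 \in B(A)\}.
\]
This is monotone non-decreasing in $A$, and by Corollary \ref{cor52} (equivalently Theorem \ref{thm11}) each individual $||u||_{S(0,+\infty)}$ is finite, so what must be ruled out is $g(A) = +\infty$ for some $A$. Suppose this fails: pick $u_{0,n} \in B(A)$ with $||u_n||_{S(0,+\infty)} \to +\infty$ (each term finite by Theorem \ref{thm11}), apply Lemma \ref{lem35} to $\{u_{0,n}\}$, and extract nonlinear profiles $U_j$ associated to $(V_{0,j},\{s_{j,n}\})$ exactly as at the outset of the proof of Proposition \ref{pp33}.

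Step~1 of Proposition \ref{pp33} carries over with no change: for $j > J_0$ the Pythagorean identity (iii) of Lemma \ref{lem35} forces $||V_{0,j}||_\hdt \leq \tilde\delta$, so Remark \ref{rem25} provides a uniform bound of order $||V_{0,j}||_\hdt$ on $||U_j||_S + ||D^{1/2}U_j||_W + \sup_t ||U_j(t)||_\hdt$. For the finitely many remaining profiles $j \leq J_0$ I would substitute Theorem \ref{thm11} for the minimality-of-$A_C$ argument: once $\sup_t ||U_j(t)||_\hdt < +\infty$ is verified, Theorem \ref{thm11} immediately gives $T_+(U_j) = +\infty$ and $||U_j||_{S(s_j,+\infty)} < +\infty$. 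Summing small- and large-profile bounds then produces the analogue of \eqref{eq38}. With that in hand, the rest of Step~2 of Proposition \ref{pp33} transfers verbatim: introduce $H_{n,\epsilon_0} = \sum_{j=1}^{J(\epsilon_0)} \tilde U_{j,n}$, verify \eqref{eq39} and the orthogonality cancellation \eqref{eq310} via the same case analysis on $\lambda_{j,n}$, $t_{j,n}$, $x_{j,n}$, and apply the perturbation Theorem \ref{thm212} with $\tilde u = H_{n,\epsilon_0}$ and error $R_{n,\epsilon_0}$ to conclude $||u_n||_{S(0,+\infty)} \leq C$ for all large $n$, with $C$ depending only on $A$. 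This contradicts the choice of $u_{0,n}$.

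The principal obstacle is supplying the hypothesis $\sup_t ||U_j(t)||_\hdt < +\infty$ for each profile, since without the critical-threshold framework of Proposition \ref{pp33} there is no a priori bound on how large $U_j$ might grow. The remedy I envision is a pointwise-in-$t$ strengthening of (iii) in Lemma \ref{lem35}: rerunning the orthogonality case analysis of Steps~3 and~7 of Proposition \ref{pp33} shows that, after passing to subsequences,
\[
||u_n(\tau)||_\hdt^2 = \sum_{j=1}^J ||\tilde U_{j,n}(\tau)||_\hdt^2 + ||w_n^{l,J}(\tau)||_\hdt^2 + o_n(1)
\]
uniformly on any compact subset of the common interval of existence of the $\tilde U_{j,n}$'s, from which $\sup_t ||U_j(t)||_\hdt \leq A$ on $U_j$'s maximal interval follows by letting $n \to \infty$. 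Once this pointwise Pythagorean bound is in place, everything else is a direct transcription of the argument already given for Proposition \ref{pp33}.
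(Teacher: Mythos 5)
Your overall strategy is exactly the one the paper intends (its ``proof'' is only a pointer to Keraani's Corollary 1.14 plus the arguments of Section 3): argue by contradiction, run the profile decomposition on a sequence $u_{0,n}\in B(A)$ with $\|u_n\|_{S(0,\infty)}\to\infty$, transfer the uniform $\hdt$ bound from $u_n$ to the nonlinear profiles by the orthogonality machinery of Section 3, apply Theorem \ref{thm11} (in place of the minimality of $A_C$) to each large profile, and close with Theorem \ref{thm212}. You also correctly identify the one genuinely delicate point: Theorem \ref{thm11} is conditional on boundedness of the \emph{flow}, so the Pythagorean identity at $t=0$ is not enough and the bound must be propagated to later times.

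Where your write-up has a gap is in the proposed remedy. The pointwise expansion
$\|u_n(\tau)\|_\hdt^2=\sum_j\|\tilde U_{j,n}(\tau)\|_\hdt^2+\|w_n^{l,J}(\tau)\|_\hdt^2+o_n(1)$
can only be justified on the \emph{common} (truncated) interval $[0,t_k^n]$, because its proof requires the approximation of $u_n$ by $\sum_j\tilde U_{j,n}+w_n^{l,J}$ via Theorem \ref{thm212}, which in turn needs the a priori bounds \eqref{eq314}; these are available only after cutting each potentially non-scattering profile at $T^+_{j,k}$ and taking $t^n_k=\min_j t^n_{j,k}$. Consequently the expansion does \emph{not} directly give $\sup_t\|U_j(t)\|_\hdt\leq A$ over each profile's full maximal interval: the near-supremum times of a given profile, transported to the $u_n$ frame, may lie beyond $t_k^n$ if a \emph{different} profile determines the minimum. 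The fix is the paper's own Step 5--Step 7 scheme: among the (assumed nonempty) set of non-scattering profiles, use the Step 6 selection to find $j_0$ with $t^{n}_{j_0,k}=t^n_k$ along subsequences, evaluate the expansion at the times $\tau^{n}_{j_0,k}$ where $U_{j_0}$ nearly attains its supremum on $(b_{j_0},T^+_{j_0,k})$, and conclude $\sup_{t\in(b_{j_0},T_+(U_{j_0}))}\|U_{j_0}(t)\|_\hdt\leq A$; Theorem \ref{thm11} then forces $U_{j_0}$ to scatter forward, contradicting its selection, so in fact \emph{no} profile fails to scatter ($J_1=0$). Only then are all profiles global with finite $S$-norm, and the Step 2 perturbation argument bounds $\|u_n\|_{S(0,\infty)}$ for large $n$, giving the contradiction. (Two minor points: the resulting bound depends on the profiles, not ``only on $A$'', which is harmless since you define $g(A)$ as a supremum and only need its finiteness; and monotonicity of $g$ is automatic from $B(A)\subseteq B(A')$ for $A\leq A'$.)
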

The proof of Corollary \ref{cor53} is similar to the one of \cite{K}, Corollary 1.14, using arguments in Section 3.

\begin{rem}\label{rem54}
For radial data $u_0$, one can give a strenghtening of Theorem \ref{thm11}, namely that the condition 
$\varliminf_{t\uparrow T_+(u_0)}||u(t)||_\hdt<\infty$ suffices to guarantee that $T_+(u_0)=+\infty$ and 
$||u||_{S(0,+\infty)}<\infty$. There are several ways to see that, some along the lines used in our work, but the quickest argument centers on the fact that using the weighted Strichartz estimates in \cite{V} and radial Sobolev embeddings (\cite{V},\cite{SW}) one can see that, for radial data, in Lemma \ref{lem28} and Remark \ref{rem29}, one can replace the $S$-norm for the norm $L^4_IL^4(dx/|x|)$ and then use Lemma \ref{lem412}.
\end{rem}

\begin{bibdiv}
\begin{biblist}

\bib{CW}{article}{
        title={The Cauchy problem for the critical nonlinear Schr\"odinger equation in $H^s$},
        subtitle={},
        author={Cazenave, T.},
        author={Weissler, F.},
        journal={Nonlinear Anal., Theory, Methods Appl.},
        volume={14},
        date={1990},
        pages={807--836}
    }

\bib{CKSTT}{article}{
        title={Global existence and scattering for rough solutions of a nonlinear Schr\"odinger equation on $\R^3$},
        subtitle={},
        author={Colliander, J.},
        author={Keel, M.},
        author={Staffilani, G.},
        author={Takaoka, H.},
        author={Tao, T.},
        label={CKSTT},
        journal={Comm. Pure Appl. Math.},
        volume={57},
        date={2004},
        pages={987--1014}
    }

\bib{HR}{article}{
        title={A sharp condition for scattering of the radial 3D cubic nonlinear Schr\"odinger equation},
        subtitle={},
        author={Holmer, J.},
        author={Roudenko, S},
        journal={preprint, http://arxiv.org/abs/math.AP/0703235},
        volume={},
        date={},
        pages={}
    }

\bib{KT}{article}{
        title={Endpoint Strichartz estimates},
        subtitle={},
        author={Keel, M.},
        author={Tao, T.},
        journal={Amer. J. Math.},
        volume={120},
        date={1998},
        pages={955--980}
    }

\bib{KM}{article}{
        title={Global well-posedness, scattering and blow-up for the energy critical, focusing, non-linear Schr\"odinger equation in the radial case},
        subtitle={},
        author={Kenig, C.},
        author={Merle, F.},
        journal={Invent. Math.},
        volume={166},
        date={2006},
        pages={645--675}
    }

\bib{KM2}{article}{
        title={Global well-posedness, scattering and blow-up for the energy critical, focusing, non-linear wave equation},
        subtitle={},
        author={Kenig, C.},
        author={Merle, F.},
        journal={to appear, Acta Math.},
        volume={},
        date={},
        pages={}
    }

\bib{KPV}{article}{
        title={Well-posedness and scattering results for the generalized Korteweg--deVries equation via the contraction principle},
        subtitle={},
        author={Kenig, C.},
        author={Ponce, G.},
        author={Vega, L.},
        journal={Comm. Pure. Appl. Math},
        volume={46},
        date={1993},
        pages={527--620}
    }

\bib{K}{article}{
        title={On the defect of compactness for the Strichartz estimates of the Schr\"odinger equation},
        subtitle={},
        author={Keraani, S.},
        journal={J. Differential Equations},
        volume={175},
        date={2001},
        pages={352--392}
    }

\bib{LS}{article}{
        title={Decay and scattering of solutions of a nonlinear Schr\"odinger equation},
        subtitle={},
        author={Lin, J.},
        author={Strauss, W.},
        journal={Jour. Funct. Anal.},
        volume={30},
        date={1978},
        pages={245--263}
    }

\bib{M}{article}{
        title={Existence of blow-up solutions in the energy space for the critical generalized KdV equation},
        subtitle={},
        author={Merle, F.},
        journal={J. Amer. Math. Soc.},
        volume={14},
        date={2001},
        pages={555-578}
    }

\bib{MV}{article}{
        title={Compactness at blow-up time for $L^2$ solutions of the critical nonlinear Schr\"odinger equation in 2D},
        subtitle={},
        author={Merle, F.},
        author={Vega, L.},
        journal={Intern. Math. Res. Notices},
        volume={8},
        date={1998},
        pages={399--425}
    }

\bib{SW}{article}{
        title={Fractional integrals on $n$-dimensional Euclidean space},
        subtitle={},
        author={Stein, E. M.},
        author={Weiss, G.},
        journal={J. Math. Mech.},
        volume={7},
        date={1958},
        pages={503--514}
    }

\bib{S}{article}{
        title={Restrictions of Fourier transforms to quadratic surfaces and decay of solutions of wave equations},
        subtitle={},
        author={Strichartz, R.},
        journal={Duke Math. J.},
        volume={44},
        date={1977},
        pages={705--714}
    }

\bib{V}{article}{
        title={Regularity of solutions to the free Schr\"odinger equation with radial initial data},
        subtitle={},
        author={Vilela, M.},
        journal={Illinois J. Math.},
        volume={45},
        date={2001},
        pages={361-370}
    }

\end{biblist}
\end{bibdiv}

\end{document}